\DeclareMathAlphabet{\mymathbb}{U}{bbold}{m}{n}
\newtheorem{theorem}{Theorem}[section]
\newtheorem{prop}[theorem]{Proposition}
\newtheorem{lemma}[theorem]{Lemma}     
\newtheorem{fact}[theorem]{Fact}
\newtheorem{coro}[theorem]{Corollary}
\theoremstyle{definition}
\newtheorem{defin}[theorem]{Definition}
\newtheorem{example}[theorem]{Example}
\newtheorem{remark}[theorem]{Remark}
\newcommand{\ts}{\hspace{0.5pt}}
\newcommand{\nts}{\hspace{-0.5pt}}
\newcommand{\RR}{\mathbb{R}\ts}
\newcommand{\CC}{\mathbb{C}}
\newcommand{\KK}{\mathbb{K}}
\newcommand{\ZZ}{\mathbb{Z}}
\newcommand{\NN}{\mathbb{N}}
\newcommand{\SSS}{\ts\mathbb{S}}
\newcommand{\cA}{\mathcal{A}}
\newcommand{\cC}{\mathcal{C}}
\newcommand{\cE}{\mathcal{E}}
\newcommand{\cG}{\mathcal{G}}
\newcommand{\cM}{\mathcal{M}}
\newcommand{\cP}{\mathcal{P}}
\newcommand{\ee}{\ts\mathrm{e}}
\newcommand{\ii}{\mathrm{i}\ts}
\newcommand{\one}{\mymathbb{1}}
\newcommand{\nix}{\mymathbb{0}}
\newcommand{\Mat}{\mathrm{Mat}}
\newcommand{\Co}{\mathrm{cent}}
\newcommand{\alg}{\mathrm{alg}}
\newcommand{\tr}{\mathrm{tr}}
\newcommand{\degr}{\mathrm{deg}}
\newcommand{\diag}{\ts\ts\mathrm{diag}\ts}
\newcommand{\exend}{\hfill$\Diamond$}
\newcommand{\defeq}{\mathrel{\mathop:}=}
\newcommand{\eqdef}{=\mathrel{\mathop:}}
\newcommand{\myfrac}[2]{\frac{\raisebox{-2pt}{$#1$}}
      {\raisebox{0.5pt}{$#2$}}}
\begin{document}

\title[Notes on Markov embedding]
{Notes on Markov embedding}

\author{Michael Baake}
\address{Fakult\"at f\"ur Mathematik, Universit\"at Bielefeld, \newline
       \indent  Postfach 100131, 33501 Bielefeld, Germany}

\author{Jeremy Sumner}
\address{School of Mathematics and Physics, University of Tasmania,
    \newline \indent Private Bag 37, Hobart, TAS 7001, Australia}

\begin{abstract} 
  The representation problem of finite-dimensional Markov matrices in
  Markov semigroups is revisited, with emphasis on concrete criteria
  for matrix subclasses of theoretical or practical relevance, such as
  equal-input, circulant, symmetric or doubly stochastic matrices.
  Here, we pay special attention to various algebraic properties of
  the embedding problem, and discuss the connection with the
  centraliser of a Markov matrix.
\end{abstract}

\keywords{Markov matrix, embedding problem, semigroup, centraliser}
\subjclass{60J10}

\maketitle

\section{Introduction}

A \emph{stochastic} or \emph{Markov matrix} $M$ is a matrix with
non-negative entries and row sums~$1$, which is the convention we use
here. We call $M$ \emph{positive} when \emph{all} its entries are
positive, and the zero matrix is singled out from the non-negative 
matrices by calling it
\emph{trivial}.  A Markov \emph{generator} $Q$, also known as a
\emph{rate matrix}, has non-negative entries off the diagonal and row
sums $0$, and $\{ \ee^{t\ts Q} : t \geqslant 0\}$ is the homogeneous
Markov \emph{semigroup} (or monoid, to be more precise) generated by
$Q$; see \cite{Chung} for general background.  It is an old and still
only partially resolved question which Markov matrices $M$ are
\emph{embeddable}, meaning that they appear in a Markov semigroup. For
a given $M$, this is clearly equivalent to the existence of a rate
matrix $Q$ such that $M=\ee^Q$.

While the traditional focus was on irreducible Markov matrices, recent
progress on models of biological evolution has put new emphasis also
on reducible and even on absorbing Markov chains, and on submodels
with additional algebraic structure; see \cite{fast,Servet,FSJW,SFSJ}
for recent investigations.  Moreover, an answer to the embedding
problem gives additional insight into Markov semigroups that are being used
in many places; see \cite{BN} and references therein for recent
examples.  This motivates us to revisit the embedding problem with an
eye on more specific \emph{classes} of Markov matrices. Here, 
we only consider
the finite-dimensional case, which gives nice relations to rather
helpful algebraic structures.  Nevertheless, it is inevitable to get
in contact with the topologically complicated structure of the
boundary of the set of embeddable matrices (as detailed in Kingman's
influential paper \cite{King} on the subject), which can be seen as
one of the reasons why the problem has not yet received a complete
solution.

An efficient starting point to the embedding problem is the paper by
Davies \cite{Davies}, who collected a good number of known results and
examples in one place, with a useful list of references. We will refer
to it frequently. Also, \cite{Joh1} gives an insightful overview of
many of the early results. Some care is required in identifying
the precise conditions in informal statements, which are often
implicit, such as irreducibility or a positive determinant. One
should note that many of the early results give abstract
characterisations that are of limited use in practice.  Our goal here
is to revisit the embedding problem from a slightly different
perspective, where we treat it in a more concrete fashion for
various classes of matrices that are natural from an algebraic point
of view or that frequently show up in applications.\smallskip

The paper is organised as follows. In Section~\ref{sec:general}, we
set the scene, recall some of the known results, and formulate various
algebraic and asymptotic properties for later use. The classic case of
two-dimensional ($d=2$) Markov matrices is reviewed in
Section~\ref{sec:two}, where we select calculations and proofs with an
eye to our later needs in higher dimensions.  One generalisation, the
class of \emph{equal-input} matrices, is then treated in
Section~\ref{sec:equal-input}, where an interesting dichotomy between
even and odd dimensions shows up. Section~\ref{sec:circulant} presents
results for the class of \emph{circulant} matrices, which have a rich
theory of their own.  Finally, we discuss various more specialised
classes of Markov matrices for $d=3$ in Section~\ref{sec:three}, once
again aiming at more concrete criteria, and close with a brief outlook
in Section~\ref{sec:outlook}.

\section{General setting and results}\label{sec:general}

Let us begin by recalling some necessary (but generally far from
sufficient) conditions for embeddability. For convenience, we give
brief hints on the proofs or references. We use $\sigma (A)$ to denote
the \emph{spectrum} of a matrix $A$, usually including
multiplicities. When the latter are not important, we simply consider
$\sigma (A)$ as a set.  Let us also recall that the $d$-dimensional
Markov matrices form a closed, convex subset
$\cM_d \subset \Mat (d, \RR)$, which has locally constant, topological
dimension $d \ts (d\! - \! 1)$, where we generally assume
$d\geqslant 2$ to avoid trivialities.\footnote{Here and below, we use
  $\Mat (d,\KK)$ to denote the ring of $d \!\times\!\nts d$-matrices
  over the field $\KK$.}  Clearly, $\cM_d$ is a monoid with respect to
matrix multiplication, where the extremal elements are the stochastic
$\{ 0, 1 \}\ts$-matrices \cite{Joh2}. Also, if $M$ is Markov, one has
$\det (M) \leqslant 1$, with equality if and only if $M$ is a
permutation matrix for an even permutation. The last property follows
from general Perron--Frobenius theory applied to $M$ together with the
fact that Markov matrices with determinant $1$ are diagonalisable
\cite[Sec.~13.6]{Gant}.

\begin{prop}\label{prop:necessary}
  If a Markov matrix\/ $M$ is embeddable, so\/ $M = \ee^Q$ 
  with\/ $Q$ a Markov generator, $M$ satisfies the following
  properties.
\begin{enumerate}\itemsep=2pt  
  \item\label{part:one} The spectra are related by\/ 
      $\sigma (M) = \ee^{\ts \sigma (Q)}$.
  \item\label{part:two} One has\/ $\ts 0< \det (M) \leqslant 1$,
      so\/ $0\notin \sigma (M)$, and\/ $\det (M)=1$ only for\/
      $M=\one$.
  \item\label{part:Elving} If\/ $\lambda \in \sigma (M)$ with\/ 
      $\lambda \ne 1$, then\/ $\lvert \lambda \rvert < 1$.
  \item\label{part:neg} Each real\/ $\lambda \in \sigma (M)$ 
      with\/ $\lambda < 0$ must have even algebraic multiplicity.    
  \item\label{part:four} $M$ is either reducible, or positive and
      thus also primitive.
  \item If\/ $M_{ij}>0$ and\/ $M_{jk}>0$, then also\/ $M_{ik}>0$.
  \end{enumerate}  
\end{prop}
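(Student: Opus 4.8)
The plan is to prove each of the six properties, several of which follow from standard spectral theory once I exploit the functional equation $M=\ee^Q$ together with the defining sign conditions on $Q$.

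For parts~\eqref{part:one}--\eqref{part:Elving}, I would argue spectrally. Part~\eqref{part:one} is the spectral mapping theorem applied to the entire function $\exp$: since $M=\ee^Q$, one has $\sigma(M)=\ee^{\ts\sigma(Q)}$, with multiplicities preserved because $\exp$ is injective on no eigenvalue collision that matters here (more carefully, generalised eigenspaces of $Q$ map to those of $M$). Part~\eqref{part:two} then follows: $\det(M)=\ee^{\tr(Q)}$, and since $Q$ is a rate matrix its row sums vanish, giving $\tr(Q)\leqslant 0$ because the diagonal entries are $-\sum_{j\ne i}Q_{ij}\leqslant 0$; hence $0<\det(M)\leqslant 1$, and $0\notin\sigma(M)$ since $\ee^z\ne 0$ always. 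The rigidity statement $\det(M)=1\Rightarrow M=\one$ I would get from $\tr(Q)=0$ forcing every off-diagonal row contribution to vanish, so $Q=0$. For part~\eqref{part:Elving}, writing $\lambda=\ee^{\mu}$ with $\mu\in\sigma(Q)$, I would use that $Q$ is a real rate matrix whose Gershgorin discs are centred at $Q_{ii}\leqslant 0$ with radius $\lvert Q_{ii}\rvert$, so $\mathrm{Re}(\mu)\leqslant 0$ and thus $\lvert\lambda\rvert=\ee^{\mathrm{Re}(\mu)}\leqslant 1$; equality $\lvert\lambda\rvert=1$ forces $\mathrm{Re}(\mu)=0$, which by a boundary analysis of the Gershgorin discs pins $\mu$ to lie on the imaginary axis in a way that is only compatible with the Perron eigenvalue, i.e.\ $\lambda=1$.

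For the parity condition~\eqref{part:neg}, the key observation is that $Q$ is real, so its spectrum is closed under complex conjugation, and the same holds for $M$. If $\lambda<0$ is a real eigenvalue of $M$, then any $\mu\in\sigma(Q)$ with $\ee^\mu=\lambda$ must have nonzero imaginary part (since $\ee^\mu$ real and negative forces $\mathrm{Im}(\mu)\in\pi+2\pi\ZZ$); hence $\mu\notin\RR$, and the conjugate $\bar\mu\ne\mu$ is also an eigenvalue of $Q$ with $\ee^{\bar\mu}=\overline{\ee^\mu}=\lambda$. Pairing $\mu$ with $\bar\mu$ and summing multiplicities across all preimages of $\lambda$ under $\exp$ shows these come in conjugate pairs, so the algebraic multiplicity of $\lambda$ in $M$ is even. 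The cleanest route is to count, for the fixed real negative $\lambda$, the generalised eigenspace dimension as a sum over conjugate pairs $\{\mu,\bar\mu\}$ of equal-dimensional blocks.

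The last two structural statements are the part I expect to be the main obstacle, since they are about the combinatorial zero-pattern rather than the spectrum. Part~\eqref{part:four} and the transitivity property~(6) both hinge on writing $M=\ee^Q=\sum_{k\geqslant 0}Q^k/k!$ and analysing which entries are forced positive. I would set $Q=cP-R$ where $R$ is a nonnegative diagonal shift chosen so that $Q+cI$ has nonnegative entries for suitable $c>0$; then $\ee^Q=\ee^{-c}\ee^{\ts cP'}$ with $P'=I+Q/c$ nonnegative, and $(P')^k$ encodes reachability in the directed graph whose edges are the positive off-diagonal entries of $Q$. The entry $M_{ik}$ is positive precisely when some power $(P')^k$ has a positive $(i,k)$-entry, i.e.\ when there is a directed path $i\to k$ in this graph. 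This reachability description immediately yields~(6): if $M_{ij}>0$ and $M_{jk}>0$ there are paths $i\to j$ and $j\to k$, hence a path $i\to k$, so $M_{ik}>0$. For part~\eqref{part:four}, I would argue that reachability is reflexive (the diagonal of $M$ is positive, since the $k=0$ term contributes $\ee^{-c}>0$), so the positivity pattern of $M$ is that of a reflexive, transitive relation; such a relation is either the full relation (every state reaches every other, giving $M$ positive and hence primitive) or it is not, in which case the reachability preorder has a nontrivial decomposition and $M$ is reducible. The subtlety I would take care over is the passage from ``$(P')^k$ has a positive entry for \emph{some} $k$'' to ``$M_{ik}>0$'', which uses that all terms in the series are nonnegative so no cancellation can occur.
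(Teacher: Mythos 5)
Your proposal is correct in substance, but it takes a genuinely different route from the paper: the paper's proof is essentially citation-based, handling (1) by the spectral mapping theorem and (2) by $\det(\ee^Q)=\ee^{\tr(Q)}$ exactly as you do, but then invoking Elving's theorem \cite{Elving} (via \cite[Prop.~8]{Davies}) for (3), \cite[Prop.~2]{Davies} for (4), and the standard structure theory of $\ee^{tQ}$ \cite[Thm.~3.2.1]{Norris} for (5) and (6), whereas you supply self-contained arguments for all of these. Your Gershgorin argument for (3) is sound --- each disc $B_{-Q_{ii}}(Q_{ii})$ meets the imaginary axis only at the origin, so $\mathrm{Re}(\mu)=0$ forces $\mu=0$ and hence $\lambda=1$; note this is precisely the argument the paper itself deploys later, in the proof of Proposition~\ref{prop:G-asymp}{\ts}(1), so your route makes the logical dependence run in the opposite direction (deriving Elving's theorem from the generator-spectrum estimate rather than citing it). Your conjugate-pairing proof of (4) is clean, resting on the standard fact (via triangularisation) that the algebraic multiplicity of $\lambda\in\sigma(\ee^Q)$ is the sum of the multiplicities of all $\mu\in\sigma(Q)$ with $\ee^\mu=\lambda$, together with $m_Q(\mu)=m_Q(\bar\mu)$ for real $Q$. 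And your uniformisation argument $\ee^Q=\ee^{-c}\ts\ee^{\ts c P'}$ with $P'=\one+Q/c$ nonnegative (take $c>\max_i(-Q_{ii})$ so the diagonal of $P'$ is positive) proves the stronger statement that the zero-pattern of $M$ is exactly the reachability relation of the digraph of $Q$, which yields (5) and (6) simultaneously: transitivity gives (6), and the dichotomy ``full relation versus proper reachable set'' gives (5), since the set of states reachable from a fixed $i$ is invariant and so witnesses reducibility when proper. Two small points of care: your phrasing in (3) (``boundary analysis \ldots only compatible with the Perron eigenvalue'') obscures what is actually an elementary geometric fact about the discs, and in (5) your initial notation $Q=cP-R$ is inconsistent with the decomposition $Q=cP'-c\ts\one$ you then use; neither affects correctness. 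What the paper's approach buys is brevity; what yours buys is a self-contained proof whose reachability description of the support of $\ee^Q$ is more informative than properties (5) and (6) themselves.
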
  

\begin{proof}
  Claim (1) is clear from the spectral mapping theorem, while (2)
  follows from the identity $\det (\ee^Q) = \ee^{\tr (Q)}$ with
  $\tr (Q) \leqslant 0$, where $\tr (Q) = 0$ means $Q=\nix$.
  
  Property (3) is Elving's theorem \cite{Elving}, compare
  \cite[Prop.~8]{Davies}, while Property (4) is shown in
  \mbox{\cite[Prop.~2]{Davies}}.  The remaining claims follow from
  standard results on the structure of $\ee^{t \ts Q}$ for
  $t\geqslant 0$; see \cite[Thm.~3.2.1]{Norris} for a general
  statement.
\end{proof}

Let us note in passing that the difficulty of solving $M=\ee^Q$ for
$Q$ consists in the existence of a logarithm of $M$ that has the
positivity properties required for a generator, where the latter
(known as the Metzler property) is the harder constraint by far when
$d>2$.

\begin{example}\label{ex:trivex}
  While $\one = \ee^{\nix}$ is trivially embeddable, any irreducible
  Markov matrix that is embeddable must actually be primitive and
   positive. For instance,
  $\left( \begin{smallmatrix} 0 & 1 \\ 1 & 0
    \end{smallmatrix} \right)$ is irreducible, but not primitive,
  while
  $\left( \begin{smallmatrix} 1-a & a \\ 1 & 0 \end{smallmatrix}
  \right)$ with $a\in (0,1)$ is primitive, but not positive, so
  neither of these matrices is embeddable; compare
  Example~\ref{ex:paradox} below for more.

  Also,
  $M=\left( \begin{smallmatrix} 1-a & a \\ a & 1-a \end{smallmatrix}
  \right)$, which has spectrum $\sigma (M) = \{ 1, 1 - 2a\}$, cannot
  be embeddable for $a\in \bigl[ \frac{1}{2} ,1 \bigr]$, as this
  violates Proposition~\ref{prop:necessary}{\ts}(\ref{part:two}), as
  well as \ref{prop:necessary}{\ts}(\ref{part:neg}) when
  $a>\frac{1}{2}$.  \exend
\end{example}

Homogeneous Markov semigroups possess a well-known asymptotic
property, which we recall here for convenience and later use; compare
\cite[Thms.~12.25 and 12.26]{Kalle} for closely related results.
Also, some aspects of Proposition~\ref{prop:necessary} may become more
transparent this way.
  
\begin{prop}\label{prop:G-asymp}
  Every finite-dimensional Markov generator\/ $Q$ has the following
  properties.
\begin{enumerate}\itemsep=2pt
\item If\/ $\lambda \in \sigma (Q)$, one either has\/ $\lambda = 0$
  or\/ $\mathrm{Re} (\lambda) < 0$. Eigenvalues of\/ $Q$ are either
  real or occur in complex conjugate pairs.
\item The minimal polynomial of\/ $Q$ is of the form\/ $z \, q(z)$
  with\/ $q(0) \ne 0$, which is to say that the algebraic and the
  geometric multiplicity of\/ $\lambda=0$ coincide.
\item If\/ $ M(t) \defeq \ee^{t Q}$, the limit\/
  $M^{}_{\infty} = \lim^{}_{t\to\infty} M (t)$ exists and is a Markov
  matrix with\/ $M^{2}_{\infty} = M^{}_{\infty}$. As such, it is
  diagonalisable, with\/
  $1 \in \sigma (M^{}_{\infty}) \subseteq \{ 0,1\}$.
\end{enumerate}   
\end{prop}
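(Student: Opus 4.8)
The plan is to prove the three claims in order, using only the defining properties of a generator (non-negative off-diagonal entries, vanishing row sums) together with the elementary fact that $M(t)=\ee^{tQ}$ is itself a Markov matrix for every $t\geqslant 0$. I would record the latter first, as it is used twice below: writing $Q = A - c\ts\one$ with $c = \max_i \lvert Q_{ii}\rvert$ and $A = Q + c\ts\one \geqslant 0$ (entrywise) gives $\ee^{tQ} = \ee^{-ct}\ts\ee^{tA}\geqslant 0$, while $Q\ts\bs{1} = 0$ for the all-ones column vector $\bs{1}$ forces $\ee^{tQ}\bs{1} = \bs{1}$, so the row sums remain equal to $1$.

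For claim (1), I would invoke Gershgorin's circle theorem. Since the off-diagonal entries of row $i$ are non-negative and sum to $-Q_{ii} = \lvert Q_{ii}\rvert$, the $i$-th Gershgorin disk is $\{ z\in\CC : \lvert z - Q_{ii}\rvert \leqslant -Q_{ii}\}$, a disk centred at $Q_{ii}\leqslant 0$ whose rightmost point is the origin. Each such disk thus lies in the closed left half-plane and meets the imaginary axis only at $0$; indeed, if $\lambda = \ii\ts y$ lies in it, then $y^2\leqslant 0$, so $\lambda = 0$. Consequently every $\lambda\in\sigma(Q)$ satisfies either $\lambda = 0$ or $\mathrm{Re}(\lambda) < 0$, and since $Q$ is real, its characteristic polynomial has real coefficients, so non-real eigenvalues occur in conjugate pairs.

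Claim (2) is the crux. First, $Q\ts\bs{1} = 0$ shows $0\in\sigma(Q)$, so the minimal polynomial is divisible by $z$; the point is to exclude a higher power, equivalently a Jordan block of size $\geqslant 2$ at the eigenvalue $0$. Here I would argue by boundedness: the entries of any Markov matrix lie in $[0,1]$, so the family $\{ M(t) : t\geqslant 0\}$ is bounded. Expressing $\ee^{tQ}$ through the Jordan form of $Q$, however, a block of size $m$ attached to $0$ would contribute entries behaving like $t^{m-1}$, hence growing without bound once $m\geqslant 2$. This contradicts boundedness, so $0$ is semisimple and the minimal polynomial has the stated form $z\ts q(z)$ with $q(0)\neq 0$; equivalently, the algebraic and geometric multiplicities of $0$ agree.

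Finally, for claim (3) I would combine (1) and (2). Let $P$ denote the spectral projection of $Q$ onto $\ker Q$ along the sum of the remaining generalised eigenspaces. Since $0$ is semisimple by (2) and every other eigenvalue has $\mathrm{Re}(\lambda) < 0$ by (1), the splitting $\ee^{tQ} = P + \ee^{tQ}(\one - P)$ shows that the second summand tends to $0$ as $t\to\infty$, because each surviving contribution carries a factor $t^{k}\ts\ee^{\lambda t}\to 0$; hence $M^{}_{\infty} = \lim_{t\to\infty} M(t) = P$ exists. As $\cM_d$ is closed, $M^{}_{\infty}$ is again Markov. Idempotency follows from the semigroup law and continuity of multiplication, via $M^{2}_{\infty} = \lim_{t\to\infty} M(t)^2 = \lim_{t\to\infty} M(2t) = M^{}_{\infty}$; an idempotent annihilates $z(z-1)$, so it is diagonalisable with $\sigma(M^{}_{\infty})\subseteq\{0,1\}$, and $M^{}_{\infty}\bs{1} = \bs{1}$ guarantees $1\in\sigma(M^{}_{\infty})$. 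The main obstacle is the Jordan-block estimate in (2); everything else amounts to bookkeeping around it.
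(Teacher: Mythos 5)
Your proof is correct, and its overall architecture matches the paper's --- Gershgorin for (1), a boundedness-versus-growth contradiction for (2), and the limit combined with $M(t)^2 = M(2t)$ for (3) --- but the crux step (2) is executed by a genuinely different mechanism. The paper stays Jordan-free: assuming the geometric multiplicity of $0$ were deficient, it picks a row vector $u \in \ker(Q^2)\setminus\ker(Q)$, computes $u\ts\ee^{nQ} = u + n\ts v$ with $v = uQ \ne 0$, and contradicts the bound $\| u\ts \ee^{nQ}\|_1 \leqslant \|u\|_1$ coming from $\|\ee^{Q}\|_1 = 1$ in the row-sum norm; this is a discrete-time, single-vector argument using nothing beyond submultiplicativity of an operator norm. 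You instead invoke the full Jordan normal form of $Q$ and let $t\to\infty$. Both are valid: the paper's version is more elementary and self-contained, while yours is the standard semigroup-theory route and applies verbatim to any bounded one-parameter matrix semigroup. To make your version airtight you should add one line: the polynomial growth $t^{m-1}$ lives in the entries of $\ee^{tJ} = S^{-1}\ee^{tQ}S$, so you need the observation that conjugation by the fixed invertible matrix $S$ preserves boundedness, hence boundedness of $\ee^{tQ}$ forces boundedness of $\ee^{tJ}$; as phrased, ``a block would contribute entries behaving like $t^{m-1}$'' leaves open a (spurious) cancellation when passing back from $\ee^{tJ}$ to $\ee^{tQ}$. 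The remaining differences are cosmetic: in (1) you apply Gershgorin directly to $Q$ where the paper first shifts to the non-negative matrix $Q+\mu\ts\one$, and in (3) you spell out the spectral-projection argument for existence of the limit, which the paper leaves implicit as a consequence of (1) and (2).
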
  

\begin{proof}
  While claim (1) follows from
  Proposition~\ref{prop:necessary}{\ts}(\ref{part:two}) via the
  spectral mapping theorem, we prefer to give an independent argument
  with some additional insight.  Define the number
  $\mu = \min \{ z \geqslant 0 : Q + z \ts \one \ts \text{ is a
    non-negative matrix} \}$ and set $R = Q + \mu \ts \one$, with
  elements $r^{}_{ij} \geqslant 0$ for all
  $1 \leqslant i,j \leqslant d$, and $\sum_j r^{}_{ij} = \mu$ for all
  $i$ by construction. The Gershgorin circles of $R$ are
  $G^{}_i = B^{}_{\mu - r_{ii}} (r^{}_{ii})$, one of which must be
  $B_{\mu} (0)$, where $B_{\rho} (x)$ is the closed disk of radius
  $\rho$ around $x$. Clearly, we then have
  $\bigcup_i G_i = B_{\mu} (0)$, and $\sigma (R) \subset B_{\mu} (0)$
  by Gershgorin's theorem \cite[Thm.~14.6]{Gant}, so
  $\sigma (Q) \subset B_{\mu} (-\mu)$. Since $Q$ is a real matrix,
  this gives the first claim.

  We know that $0 \in \sigma (Q)$, as $Q$ has zero row sums.  To show
  claim (2), assume to the contrary that the geometric multiplicity of $0$ is
  smaller than the algebraic one. Then, there exists a
  row vector\footnote{Though an equivalent argument can
    be given with a column instead of a row vector, this version has the
    slight advantage that we can directly use the row sum
    normalisation of $M$.}  $u \ne 0$ such that $uQ^2=0$ but
  $uQ=v\ne 0$. For $M\defeq \ee^Q$,  this 
  implies $uM = u+v$ and thus $uM^n = u+ n \ts v$ for $n\in\NN$ 
  by induction.  If $\|.\|^{}_{1}$ denotes the $1$-norm for row
  vectors, the matching matrix norm is the row sum norm, with
  $\| M \|^{}_{1} = 1$ because $M$ is a Markov matrix. 
  Consequently, we get
  $\| u M^n \|^{}_{1} \leqslant \| u \|^{}_{1} \, \| M \|^{n}_{1} =
  \|u \|^{}_{1} $, which is bounded. In contrast, we have
  $\| u + n \ts v \|^{}_{1} \geqslant n \ts \| v \|^{}_{1} - \| u
  \|^{}_{1}$, which is unbounded due to $\| v\|^{}_{1} >0$ and thus
  gives a contradiction. Therefore, the vector $u \in \mathrm{ker}(Q^2)
  \setminus \mathrm{ker} (Q)$ cannot exist.

  Claim (3) is a simple consequence of Claims (1) and (2) in
  conjunction with the observation that the Markov property is
  preserved under taking limits, because $\cM_d$ is closed in
  $\Mat (d,\RR)$. The projector property follows from
\[
   M^{2}_{\infty} \, = \, \Bigl(\ts \lim_{t\to\infty} \ee^{t Q}
   \Bigr)^2  = \, \lim_{t\to\infty} \ee^{2 t Q} \, = \, M^{}_{\infty}
   \ts ,
\]
  which also implies the claim on the eigenvalues.
\end{proof}

One immediate consequence is the following. If $\one\ne M=\ee^Q$ is
embeddable, the Markov semigroup $\{ \ee^{t Q} : t \geqslant 0 \}$
defines a path (in $\cM_d$) of embeddable matrices from $\one$
(included) via $M$ to $M_{\infty}$ (not included). Here, since
$Q\ne\nix$ and thus $\tr (Q) < 0$, one has $\det (M_{\infty})=0$, and
$M_{\infty}$ itself is not embeddable.  In this way, no embeddable
Markov matrix is isolated, and any two embeddable matrices of the same
dimension are pathwise connected.

When $M$ is a general Markov matrix (hence not necessarily
embeddable), it is also true that $1$ is an eigenvalue with equal
algebraic and geometric multiplicity \cite[Thm.~13.10]{Gant}, which
can be seen as a consequence of the normal form of non-negative
matrices in \cite[Eq.~(13.70)]{Gant}.  However, $M^n$ need not
converge as $n\to\infty$, because $M$ can be irreducible without being
primitive; compare \cite[Thms.~8.18 and 8.22]{Kalle}. This cannot
occur for embeddable matrices, in line with Elving's theorem, which is
Proposition~\ref{prop:necessary}{\ts}(\ref{part:Elving}) above.

\begin{coro}\label{coro:M-asymp}
  If\/ $M$ is an embeddable Markov matrix,
  $M^{}_{\infty} = \lim_{n\to\infty} M^n$ exists and is again a Markov
  matrix.  Moreover, $R=M^{}_{\infty}-\one$ is a generator that
  satisfies\/ $R^2 = -R$. As such, it is diagonalisable, with\/
  $0 \in \sigma (R) \subseteq \{ -1, 0 \}$.
  
  More generally, with\/
  $\SSS\defeq \{ z \in \CC : \lvert z \rvert = 1 \}$ denoting the unit
  circle, the same conclusions hold for any Markov matrix\/ $M$ with\/
  $\sigma (M) \cap \SSS = \{ 1 \}$.
\end{coro}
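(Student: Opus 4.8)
The plan is to prove the general statement first, as the embeddable case then follows immediately. For a Markov matrix $M$ with $\sigma (M) \cap \SSS = \{ 1 \}$, I would pass to the Jordan normal form of $M$ over $\CC$. The key structural input is that, for \emph{any} Markov matrix, the eigenvalue $1$ has coinciding algebraic and geometric multiplicity \cite[Thm.~13.10]{Gant}, so all Jordan blocks attached to $\lambda = 1$ are $1 \!\times\! 1$. By hypothesis, every other eigenvalue $\lambda$ lies strictly inside the unit disk, $\lvert \lambda \rvert < 1$, whence the corresponding Jordan blocks $J$ satisfy $J^n \to \nix$ as $n \to \infty$. Consequently, $M^n$ converges, and the limit $M^{}_{\infty}$ is precisely the spectral projector onto the eigenspace $\mathrm{ker} (M - \one)$ along the sum of the remaining generalised eigenspaces.

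Next I would record the elementary consequences. Since $\cM_d$ is a closed monoid, each $M^n$ is Markov and hence so is the limit $M^{}_{\infty}$; in particular, $M^{}_{\infty}$ has non-negative entries and unit row sums, and it retains $1$ as an eigenvalue. The idempotency $M^{2}_{\infty} = M^{}_{\infty}$ follows either from its description as a spectral projector or directly from $M^{2}_{\infty} = \lim_{n\to\infty} M^{2n} = M^{}_{\infty}$. Setting $R = M^{}_{\infty} - \one$, a one-line computation gives $R^2 = M^{2}_{\infty} - 2 \ts M^{}_{\infty} + \one = -(M^{}_{\infty} - \one) = -R$. That $R$ is a generator is then immediate: its row sums vanish because those of $M^{}_{\infty}$ equal $1$, and its off-diagonal entries coincide with those of $M^{}_{\infty}$ and are therefore non-negative.

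For the spectral and diagonalisability claims, I would invoke the relation $R^2 = -R$: the minimal polynomial of $R$ divides $z \ts (z+1)$, which has simple roots, so $R$ is diagonalisable with $\sigma (R) \subseteq \{ -1, 0 \}$. Since $M^{}_{\infty}$ inherits the eigenvalue $1$, the matrix $R$ has $0$ as an eigenvalue, giving $0 \in \sigma (R)$. Finally, the embeddable case follows at once: by Elving's theorem, Proposition~\ref{prop:necessary}{\ts}(\ref{part:Elving}), an embeddable $M$ satisfies $\lvert \lambda \rvert < 1$ for every eigenvalue $\lambda \ne 1$, so $\sigma (M) \cap \SSS = \{ 1 \}$ and the general statement applies verbatim.

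The one point needing genuine care is the convergence step together with the correct identification of $M^{}_{\infty}$. One has to ensure separately that the eigenvalue $1$ contributes no nilpotent part (handled by the algebraic--geometric multiplicity coincidence) and that no other eigenvalue sits on the unit circle to cause oscillation (handled by the hypothesis, respectively by Elving's theorem). Once these two facts pin down $M^{}_{\infty}$ as the spectral projector onto $\mathrm{ker} (M - \one)$, everything else is routine linear algebra.
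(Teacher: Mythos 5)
Your proof is correct and takes essentially the same approach as the paper for the general statement: the identical Jordan-form argument, resting on the coincidence of algebraic and geometric multiplicity of the eigenvalue $1$ for any Markov matrix (Gantmacher) together with the hypothesis that all other eigenvalues lie strictly inside the unit circle. The only difference is organisational: the paper gives an additional independent, shorter argument for the embeddable case via $M^n = \ee^{nQ}$ and Proposition~\ref{prop:G-asymp}{\ts}(3), whereas you deduce that case from the general one via Elving's theorem --- a route the paper itself explicitly notes is available.
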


\begin{proof}
  Though the claims for embeddable matrices follow from the more
  general ones via Elving's theorem, we give a simple independent
  argument.  When $M=\ee^Q$, one has $M^n = \ee^{nQ}$, and the first
  claim follows from \mbox{Proposition~\ref{prop:G-asymp}{\ts}(3)}.
  The generator property of $R$ is clear, while
  $M^{2}_{\infty} = M^{}_{\infty}$ implies the relation for $R$ as
  well as the property of the spectrum.
  
  For the general claim, recall the comment made before the
  corollary. It implies that such an $M$ has minimal polynomial
  $(z-1) \ts\ts p(z)$ with $p(1)\ne 0$. All roots of $p$ have modulus
  $< 1$, so convergence follows from a standard Jordan normal form
  argument.
\end{proof}

Let $E_d$ denote the set of $d$-dimensional Markov matrices that are
embeddable, and let $\cE_d \defeq \langle E_d \rangle$ be the
(multiplicative) semigroup generated by it.  In view of the dichotomy
in Proposition~\ref{prop:necessary}{\ts}\eqref{part:four}, we also
introduce $\cE^+_d \eqdef \{ M\in \cE_d : M \text{ is positive} \}$.

\begin{fact}\label{fact:ideal} 
  $\cE_d$ is a monoid, while\/ $\cE^+_d$ is a semigroup and a
  two-sided ideal in\/ $\cE_d$.
\end{fact}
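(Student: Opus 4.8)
The plan is to verify the three assertions in turn, disposing of the monoid and semigroup properties quickly and reserving the real work for the ideal property. First, $\cE_d$ is a semigroup by construction, and since $\one = \ee^{\nix}$ is embeddable we have $\one \in E_d \subseteq \cE_d$; as $\one$ is the multiplicative identity, $\cE_d$ is a monoid. For the semigroup property of $\cE^+_d$, I would observe that if $M, N \in \cE^+_d$ then $MN \in \cE_d$ by closure, while $(MN)^{}_{ik} = \sum_j M^{}_{ij} N^{}_{jk}$ is a sum of strictly positive terms, so $MN$ is positive and hence lies in $\cE^+_d$. Non-emptiness of $\cE^+_d$ follows from the existence of positive embeddable matrices, for instance $\ee^Q$ with $Q$ a generator whose off-diagonal entries are all strictly positive.

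For the ideal property, the goal is to show $M N, N M \in \cE^+_d$ whenever $M \in \cE^+_d$ and $N \in \cE_d$. Both products lie in $\cE_d$ by closure, so only positivity is at issue. Writing $(MN)^{}_{ik} = \sum_j M^{}_{ij} N^{}_{jk}$ with all $M^{}_{ij} > 0$, this entry is positive as soon as the $k$-th column of $N$ has at least one positive entry; symmetrically, $(NM)^{}_{ik} = \sum_j N^{}_{ij} M^{}_{jk}$ is positive as soon as the $i$-th row of $N$ is nonzero. The row condition is automatic, since $N$ is Markov and has row sums $1$.

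The crux, and the step I expect to be the main obstacle, is ruling out zero columns in $N$, since a general Markov matrix (e.g.\ $\left( \begin{smallmatrix} 1 & 0 \\ 1 & 0 \end{smallmatrix} \right)$) may well have one. Here I would invoke that every element of $\cE_d$ is invertible: each factor in $E_d$ has $\det > 0$ by Proposition~\ref{prop:necessary}{\ts}(\ref{part:two}), so any product $N$ of such factors satisfies $\det (N) > 0$. An invertible matrix can have neither a zero row nor a zero column, so every column of $N$ carries a positive entry. This upgrades both displayed entries to strictly positive values, giving $MN, NM \in \cE^+_d$ and establishing that $\cE^+_d$ is a two-sided ideal in $\cE_d$.
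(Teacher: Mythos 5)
Your proposal is correct, and its overall outline (unit element, closure of positive products, then the two ideal products) matches the paper's. However, your treatment of the crux is genuinely more careful than the paper's own proof, which simply asserts that the product of a positive Markov matrix with \emph{any} Markov matrix, in either order, is again positive. Taken literally, that assertion is false: for the order $MN$ with $M$ positive, positivity of $(MN)^{}_{ik} = \sum_j M^{}_{ij} N^{}_{jk}$ requires the $k$-th column of $N$ to be nonzero, and a general Markov matrix can have a zero column, e.g.\ $N = \left( \begin{smallmatrix} 1 & 0 \\ 1 & 0 \end{smallmatrix} \right)$, for which $MN$ again has a zero column. Only the order $NM$ is automatic, via the row sums of $N$, exactly as you observe. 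Your determinant argument --- every factor from $E_d$ has positive determinant by Proposition~\ref{prop:necessary}{\ts}(\ref{part:two}), hence every $N \in \cE_d$ is invertible and therefore has no zero column --- is precisely the ingredient needed to make the left product work, and it is what the paper's proof implicitly relies on without spelling it out (the claim as stated is only valid for Markov matrices without zero columns, which all elements of $\cE_d$ are). So your proof in fact closes a small gap in the published argument; nothing in your write-up needs changing.
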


\begin{proof}
  Since $\one$ is embeddable, $\cE_d$ is a semigroup with unit, and
  the first claim is clear. For the second, observe that the product
  of positive Markov matrices is positive, which implies the semigroup
  property, and that the product of a positive Markov matrix with any
  Markov matrix, in either order, is again a positive Markov matrix,
  which implies $\cE^+_d$ to be a two-sided ideal in $\cE_d$.
\end{proof}

The set $E_d$ is relatively closed \cite[Prop.~3]{King} within
$\cM^{>}_{d} \defeq \{ M \in \cM^{}_{d} : \det (M) > 0 \}$, with the
same topological dimension. In particular, one has the
interior-closure inclusions
\begin{equation}\label{eq:include}
    \overline{E^{}_d}^{\,\circ} \, \subset \, E^{}_{d}
    \, \subset \, \overline{E^{\circ}_{d}} \ts ,
\end{equation}
as follows from \cite[Prop.~4]{King}.  Moreover, $E_d$ contains
various important relatively open subsets, as detailed in
\cite[Lemma~3 and Thm.~7]{Davies}. The proofs use some standard
deformation arguments, which can easily be extended to give the
following result.

\begin{fact}\label{fact:dense}
  The set\/ $E_d$ of embeddable, $d$-dimensional Markov matrices
  contains the following dense and relatively open subsets, namely
\begin{enumerate}\itemsep=2pt
\item the embeddable matrices without eigenvalues
    in\/ $\{ x\in\RR : x \leqslant 0 \}$;
\item the embeddable matrices with distinct eigenvalues;
\item the embeddable matrices with Abelian centraliser in\/
  $\Mat (d,\RR)$, as characterised in more detail below in
  Fact~\textnormal{\ref{fact:comm}}.
\end{enumerate}
 Moreover, each of these subsets has full measure in\/ $E_d$.
 \qed
\end{fact}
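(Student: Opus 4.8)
The plan is to reduce everything to one deformation argument together with three separate openness checks. Throughout I work in the affine space $\{M \in \Mat(d,\RR) : \text{all row sums equal } 1\}$ of dimension $d(d-1)$, carrying the induced Lebesgue measure, in which $\cM_d$ is a full-dimensional convex body and $E^{}_d$ is relatively closed in $\cM^{>}_d$ with nonempty interior $E^{\circ}_d$. I will use two inputs from the excerpt: every $M \in E^{}_d$ has $\det(M) > 0$ by Proposition~\ref{prop:necessary}(2), so $0 \notin \sigma(M)$; and $E^{}_d \subset \overline{E^{\circ}_d}$ by the inclusions \eqref{eq:include}. The key simplification is that each of the three sets \emph{contains} the set in (2). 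Indeed, if $M \in E^{}_d$ has distinct eigenvalues, then $M$ is non-derogatory (its minimal and characteristic polynomials agree), so its centraliser is Abelian, which gives~(3); and $M$ can have no non-positive real eigenvalue, since $0 \notin \sigma(M)$ and, by Proposition~\ref{prop:necessary}(4), a negative real eigenvalue would have algebraic multiplicity $\geqslant 2$, contradicting distinctness, which gives~(1). Hence density and full measure for all three follow once they are established for~(2), and it then remains only to prove relative openness of each set separately.

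For (2), let $\Delta(M)$ denote the discriminant of the characteristic polynomial of $M$. The set $G = \{\Delta \neq 0\}$ is open, and its complement $\{\Delta = 0\}$ is a proper real-algebraic subvariety (proper because positive Markov matrices with distinct eigenvalues exist), hence closed, nowhere dense, and of measure zero. Thus $E^{}_d \cap \{\Delta = 0\}$ has measure zero in $E^{}_d$, so the set in (2) has full measure. For density I invoke the deformation argument: since $G$ is open and dense, $E^{\circ}_d \cap G$ is dense in the open set $E^{\circ}_d$, whence $E^{}_d \subset \overline{E^{\circ}_d} \subset \overline{E^{\circ}_d \cap G} \subset \overline{E^{}_d \cap G}$, so (2) is dense in $E^{}_d$. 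This is precisely the standard deformation/interior mechanism underlying \cite[Lemma~3 and Thm.~7]{Davies} and \cite[Prop.~4]{King}.

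The relative openness of (2) is immediate, as $\Delta$ is continuous and $G$ is open. For (1), I use that within $\cM_d$ the spectral radius is at most $1$, so any non-positive real eigenvalue must lie in the compact set $[-1,0]$; the condition that the spectrum avoid a fixed compact set is open by upper semicontinuity of the spectrum, so (1) is relatively open in $E^{}_d$.

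The remaining and, I expect, the only genuinely technical point is the relative openness of (3), which rests on the characterisation (recorded below as Fact~\ref{fact:comm}) that the centraliser $C(M) = \{A \in \Mat(d,\RR) : AM = MA\}$ is Abelian exactly when $M$ is non-derogatory. I would prove openness of the non-derogatory locus by writing $C(M) = \ker(\mathrm{ad}_M)$ with $\mathrm{ad}_M(A) = MA - AM$, so that $\dim C(M) = d^2 - \mathrm{rank}(\mathrm{ad}_M)$. Since $\dim C(M) \geqslant d$ always, with equality precisely in the non-derogatory case, one has $\mathrm{rank}(\mathrm{ad}_M) \leqslant d^2 - d$ throughout, with equality exactly on the non-derogatory locus. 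As $\{M : \mathrm{rank}(\mathrm{ad}_M) \geqslant d^2 - d\}$ is open by lower semicontinuity of the rank, this locus is open, which yields the relative openness of (3) and completes the argument. The main obstacle is thus purely the semicontinuity bookkeeping for the centraliser dimension; the density and measure statements are handled uniformly by the single deformation argument for~(2).
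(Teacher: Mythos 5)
Your proof is correct, but it is organised quite differently from what the paper intends. The paper gives no written proof at all: it declares the result to follow by extending the ``standard deformation arguments'' of \cite[Lemma~3 and Thm.~7]{Davies}, i.e.\ by perturbing the generator $Q$ of an embedding $M=\ee^Q$ to a generic nearby generator and using that the exponentials of the perturbed generators stay embeddable and close to $M$. You instead make the argument self-contained by three devices: (i) the containment reduction, observing that the set in (2) lies inside both (1) and (3) --- via $\det(M)>0$ and Proposition~\ref{prop:necessary}(4) for (1), and via Fact~\ref{fact:comm} for (3) --- so that density and full measure only need to be proved once; (ii) genericity of the discriminant locus $\{\Delta=0\}$ as a proper real-algebraic subvariety, combined with the interior--closure inclusion $E_d \subset \overline{E^{\circ}_d}$ from Eq.~\eqref{eq:include} (i.e.\ \cite[Prop.~4]{King}), which replaces the explicit deformation of generators; and (iii) semicontinuity arguments (of the spectrum for (1), of $\mathrm{rank}(\mathrm{ad}_M)$ for (3)) for the three openness claims. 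Each step checks out: the inclusion $\overline{E^{\circ}_d}\subset\overline{E^{\circ}_d\cap G}$ is legitimate because $G$ is open and dense in the ambient affine space and $E^{\circ}_d$ is open there, and your rank argument for (3) rests on the classical Frobenius fact that $\dim \mathrm{cent}(M)\geqslant d$ with equality exactly for cyclic $M$, which is standard even though the paper only records the qualitative equivalence in Fact~\ref{fact:comm}. What your route buys is that the full-measure assertion --- which a pure deformation argument does not directly deliver --- comes for free from the algebraic-variety bound, and that no perturbation of generators needs to be constructed; what the Davies-style deformation buys instead is constructive information (explicit nearby embeddable matrices with explicitly perturbed generators), which is what the paper exploits elsewhere, e.g.\ in the discussion following Definition~\ref{def:stable} and in the proof of Corollary~\ref{coro:circ}. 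One stylistic caveat: since King's inclusion \eqref{eq:include} is itself proved by deformation, your argument is not independent of that circle of ideas, only a cleaner packaging of it.
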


Let us take a different perspective on the general embedding problem,
which will provide a slightly simpler setting where some concrete
answers are possible. Assume that we consider a (finite or infinite)
set of rate matrices which generate a closed matrix algebra $\cG$ over
$\RR$ with respect to matrix addition and multiplication. For
$G\in\cG$ and $t\in\RR$, the series
\[
    \ee^{t G} \: = \: \one \ts + \sum_{m\geqslant 1}
    \myfrac{t^m}{m!} \, G^m 
\]
converges compactly in any matrix norm, where $G^m \in \cG$ for all
$m\in\NN$, thus $A\defeq \ee^G - \one \in \cG$ as well. Together with
Proposition~\ref{prop:G-asymp}{\ts}(\ref{part:one}) and its proof, one
has the following connection.

\begin{fact}\label{fact:connection}
  For any\/ $M \in \cM_d$, the matrix\/ $A\defeq M \! - \! \one$ is a
  Markov generator that commutes with\/ $M$ and satisfies\/
  $\sigma (A) \subset B^{}_{\mu} (-\mu)$ for some\/
  $0\leqslant \mu \leqslant 1$.  If\/ $M$ is embeddable, with\/
  $M = \ee^Q$ say, one has\/ $[A,Q \ts ]=\nix$.  \qed
\end{fact}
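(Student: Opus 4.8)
The plan is to verify the four assertions in turn, each of which reduces to an elementary property of $A$ or of its relation to $M$. First I would settle the generator property of $A\defeq M-\one$. Its off-diagonal entries coincide with those of $M$ and are therefore non-negative, while each row sum equals $\sum_j M_{ij} - 1 = 0$ because $M$ is Markov. Thus $A$ has the Metzler property together with vanishing row sums, so it is a Markov generator. That $A$ commutes with $M$ is then immediate, since $AM = (M-\one)\ts M = M^2 - M = M\ts(M-\one) = MA$; more conceptually, $A$ is a polynomial in $M$.

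Next, for the spectral localisation I would reuse the Gershgorin argument from the proof of Proposition~\ref{prop:G-asymp}{\ts}(1), now applied to the generator $A$. The relevant shift is $\mu \defeq \min \{ z \geqslant 0 : A + z \ts \one \text{ is a non-negative matrix}\}$. Since $A + z\ts\one = M + (z-1)\ts\one$ has off-diagonal entries $M_{ij}\geqslant 0$ and diagonal entries $M_{ii} + z - 1$, non-negativity forces $z \geqslant 1 - \min_i M_{ii}$, so $\mu = 1 - \min_i M_{ii}$; because $0 \leqslant M_{ii} \leqslant 1$, this gives $0 \leqslant \mu \leqslant 1$ as claimed. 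Setting $R = A + \mu\ts\one \geqslant 0$, each row sum of $R$ equals $\mu$, so the Gershgorin disks all lie in $B_{\mu}(0)$ and hence $\sigma (R) \subset B_{\mu}(0)$; undoing the shift yields $\sigma (A) \subset B_{\mu}(-\mu)$.

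Finally, for the embeddable case $M = \ee^Q$ I would write $A = \ee^Q - \one = \sum_{m\geqslant 1} Q^m/m!$, a compactly convergent power series in $Q$ with vanishing constant term. Any such series commutes with $Q$ (equivalently, $Q\ts\ee^Q = \ee^Q Q$), whence $[A,Q\ts] = QA - AQ = \nix$, which is the last assertion.

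I do not expect a genuine obstacle here: the statement packages together elementary observations, and the only computation of substance is the identification $\mu = 1 - \min_i M_{ii}$ together with the remark that the Gershgorin bound established for generators in Proposition~\ref{prop:G-asymp} transfers verbatim to $A$, since $A$ is itself a generator with constant row sums after the shift. The commuting statements are purely formal.
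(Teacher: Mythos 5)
Your proposal is correct and follows essentially the same route as the paper: the paper's (implicit) argument is exactly the Gershgorin-shift computation from the proof of Proposition~\ref{prop:G-asymp}{\ts}(1) applied to the generator $A=M-\one$, combined with the observation that $A=\ee^Q-\one$ is a power series in $Q$ and hence commutes with it. Your explicit identification $\mu = 1-\min_i M_{ii} \in [0,1]$ just makes quantitative what the paper leaves as a remark.
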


When $d\geqslant 2$, it is worth mentioning that, due to the
Cayley--Hamilton theorem and the structure of the exponential series,
one also has a representation
\begin{equation}\label{eq:CH}
    \ee^{t Q}  \, = \, \one \, + \sum_{\ell=1}^{d-1}
    F^{}_{\ell} (t) \, Q^{\ell} ,
\end{equation}
where each $F^{}_{\ell}$ is a power series with infinite convergence
radius and $F^{}_{\ell} (0) = 0$. When $Q$ is a rate matrix, this
structure suggests to consider the (non-unital) matrix algebra
\begin{equation}\label{eq:def:alg}
    \alg (Q) \, \defeq \, \big\langle Q, Q^2, \ldots , Q^{d-1} 
    \big\rangle^{}_{\RR} \ts ,
\end{equation}
which will become relevant shortly. Note that $\alg (Q)$ is a
subalgebra of the matrix algebra $\cA_0$ of all
real matrices with vanishing row sums, 
\begin{equation}\label{eq:A0}
    \cA_0 \, \defeq \, \big\{ A \in \Mat (d,\RR) : 
    \textstyle{\sum_{j=1}^{d}}  A_{ij} = 0 
    \text{ for all $1\leqslant i \leqslant d$} \big\},
\end{equation}
which does not contain $\one$ and is thus non-unital, too. 
Clearly, $\alg (Q)$ has dimension
$d \nts\nts - \! 1$ or smaller, where the latter case occurs if $Q$
has a minimal polynomial of degree less than $d$. Also, since both
$\cA_0$ and $\alg (Q)$ are closed subspaces of $\Mat (d, \RR)$ when
viewed as real vector spaces, they are complete with respect to any
chosen matrix norm on $\Mat (d,\RR)$, which is another way to see that
$\ee^{t Q} - \one$, for any $t\geqslant 0$, lies in $\alg (Q)$.

\begin{lemma}\label{lem:QandA}
  Let\/ $Q$ be a Markov generator, with\/ $M(t) = \ee^{t\ts Q}$
  defining the corresponding semigroup. Set\/ $M=M(1)=\ee^Q$ together
  with\/ $A=M-\one$ and\/ $R = M_{\infty}-\one$ as before, where\/ $R$
  is well defined by Corollary~\textnormal{\ref{coro:M-asymp}}.  Then,
  $R \in \alg (Q) \cap \ts \alg (A)$.
\end{lemma}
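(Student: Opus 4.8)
The plan is to exhibit a single sequence of matrices that lies in both $\alg (Q)$ and $\alg (A)$ and converges to $R$, and then to invoke closedness. The crucial structural fact I would rely on is that $\alg (Q)$ and $\alg (A)$ are finite-dimensional real subspaces of $\Mat (d,\RR)$, of dimension at most $d\!-\!1$, and hence closed; consequently their intersection is closed as well, and any convergent sequence taking its values in $\alg (Q) \cap \alg (A)$ has its limit there, too.

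The natural candidate is the sequence $\bigl( M^n - \one \bigr)^{}_{n\in\NN}$. On one hand, writing $M^n = \ee^{n Q}$ and applying the Cayley--Hamilton representation \eqref{eq:CH} at $t=n$ gives $M^n - \one = \sum_{\ell=1}^{d-1} F^{}_{\ell} (n) \, Q^{\ell} \in \alg (Q)$. On the other hand, expanding $M^n = (\one + A)^n$ by the binomial theorem yields $M^n - \one = \sum_{k=1}^{n} \binom{n}{k} A^k$, which I claim lies in $\alg (A)$. For this I would verify that every power $A^k$ with $k\geqslant 1$ already belongs to $\alg (A)$: since $A = M - \one \in \cA_0$ has vanishing row sums, one has $0 \in \sigma (A)$, so the characteristic polynomial of $A$ carries no constant term, and Cayley--Hamilton then expresses $A^d$, and inductively every higher power, as a real linear combination of $A, \dots, A^{d-1}$. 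Hence $M^n - \one \in \alg (Q) \cap \alg (A)$ for every $n$.

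To finish, I would invoke Corollary~\ref{coro:M-asymp}, which guarantees that $M^n \to M^{}_{\infty}$ as $n\to\infty$, so that $M^n - \one \to M^{}_{\infty} - \one = R$. As the whole sequence lies in the closed set $\alg (Q) \cap \alg (A)$, the limit $R$ does so as well, which is the assertion.

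The one point that genuinely needs care is the verification that $\alg (A)$ is closed under multiplication and therefore contains \emph{all} powers $A^k$, rather than merely $A, \dots, A^{d-1}$. This is exactly where the zero-row-sum property of the generator $A$ enters, mirroring the remark made for $\alg (Q)$ after \eqref{eq:A0}: without the absence of a constant term in the characteristic polynomial, Cayley--Hamilton would reintroduce a multiple of $\one$, which lies in neither of the two non-unital algebras. Everything else is a routine limit argument built on finite-dimensionality.
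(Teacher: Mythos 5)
Your proof is correct and takes essentially the same approach as the paper: membership in $\alg (Q)$ via the Cayley--Hamilton representation \eqref{eq:CH}, membership in $\alg (A)$ via the binomial expansion of $(\one + A)^n - \one$, and then closedness of these finite-dimensional algebras together with Corollary~\ref{coro:M-asymp}. The only cosmetic difference is that the paper deduces $R \in \alg (Q)$ from the continuous limit $t \to \infty$ of $\ee^{t Q} - \one$ while you use the discrete sequence $t = n$ for both algebras, and your explicit Cayley--Hamilton check that \emph{all} powers $A^k$ lie in $\alg (A)$ (via the vanishing constant term of the characteristic polynomial) spells out a point the paper leaves implicit.
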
 

\begin{proof}
  Clearly, both $A$ and $R$ are generators. Since
  $\ee^{t\ts Q}-\one \in \alg (Q)$ for any $t\geqslant 0$, its limit
  as $t\to\infty$, which is $R$, lies in $\alg (Q)$ as well, because
  the latter is closed.
   
   Now, observe
\[
     R \, = \lim_{n\to\infty} \bigl( \ee^{nQ} - \one \bigr)
     \, = \lim_{n\to\infty}  \bigl( (A + \one)^n  - \one \bigr),
\]
where
$(A+\one)^n - \one = \sum_{m=1}^{n} \binom{n}{m}  A^{m} \in
\alg (A)$. Since $\alg (A)$ is closed, we also have $R\in \alg (A)$,
and our claim follows.
\end{proof}

Next, we state a simple special case of $\alg (Q)$ for later use,
which harvests the fact that the only nilpotent Markov generator is
$Q=\nix.$

\begin{fact}\label{fact:one-dim}
  Let\/ $Q$ be a Markov generator, with\/ $d\geqslant 2$. Then, the
  following properties are equivalent.
\begin{enumerate}\itemsep=2pt
\item The minimal polynomial of\/ $Q$ is\/ $z (z+c)$ with\/ $c>0$.
\item $\ts\ts Q$ is diagonalisable, with eigenvalues\/ $0$ and\/ $-c$
  for some\/ $c>0$.
\item The matrix algebra\/ $\alg (Q)$ is one-dimensional.
\end{enumerate}    
\end{fact}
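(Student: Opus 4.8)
The plan is to prove the cyclic chain $(1) \Rightarrow (2) \Rightarrow (3) \Rightarrow (1)$, noting at the outset that $(1) \Leftrightarrow (2)$ is in any case the standard criterion that a matrix is diagonalisable exactly when its minimal polynomial factors into distinct linear terms. For $(1) \Rightarrow (2)$, I would observe that $z(z+c)$ with $c>0$ has two distinct real roots, so $Q$ is diagonalisable with $\sigma(Q) \subseteq \{0,-c\}$; since the minimal polynomial has degree $2$, both $0$ and $-c$ must genuinely be eigenvalues (and $0$ always is, by the zero row sums). For $(2) \Rightarrow (3)$, diagonalisability with $\sigma(Q)=\{0,-c\}$ yields $Q(Q+c\one)=\nix$, hence $Q^2=-c\ts Q$ and, by an immediate induction, $Q^{\ell}=(-c)^{\ell-1}Q$ for all $\ell\geqslant 1$. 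Thus $\alg(Q)=\langle Q\rangle^{}_{\RR}$, which is one-dimensional because $c>0$ forces $Q\neq\nix$.

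The substance of the statement lies in $(3) \Rightarrow (1)$. Here I would first note that a one-dimensional $\alg(Q)$ forces $Q\neq\nix$ (otherwise $\alg(Q)=\{\nix\}$), so $\alg(Q)=\langle Q\rangle^{}_{\RR}$ and in particular $Q^2=\alpha\ts Q$ for some $\alpha\in\RR$. The minimal polynomial of $Q$ then divides $z(z-\alpha)$, leaving only three candidates. The cases $z$ and $z-\alpha$ are excluded because they would force $Q=\nix$ and $Q=\alpha\one$ respectively, the latter contradicting the zero row sums unless $\alpha=0$, which returns to $Q=\nix$. To discard $\alpha=0$ itself, I would invoke that $\alpha=0$ makes $Q$ nilpotent, and the only nilpotent Markov generator is $\nix$ --- a fact that follows from Proposition~\ref{prop:G-asymp}{\ts}(2), where $0$ is always a simple root of the minimal polynomial. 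Hence the minimal polynomial is exactly $z(z-\alpha)$ with $\alpha$ the unique nonzero eigenvalue of $Q$.

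The final and most delicate point is the sign of $\alpha$, which is where the generator property is indispensable rather than generic linear algebra. Since $\alpha$ is a real, nonzero eigenvalue of the rate matrix $Q$, Proposition~\ref{prop:G-asymp}{\ts}(1) gives $\mathrm{Re}(\alpha)=\alpha<0$, so that $c\defeq -\alpha>0$ and the minimal polynomial becomes $z(z+c)$, which is claim~(1). I expect the main obstacle to be precisely this coupling of the elementary structure $Q^2=\alpha\ts Q$ with the spectral constraints on Markov generators: ruling out $\alpha=0$ and fixing the sign of $\alpha$ both rely on Proposition~\ref{prop:G-asymp}, and it is worth keeping the roles of its parts~(1) and~(2) clearly separated.
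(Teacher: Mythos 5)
Your proof is correct and takes essentially the same route as the paper: both reduce (3) to the relation $Q^2 = \alpha\ts Q$, exclude the nilpotent case via the fact that the only nilpotent Markov generator is $\nix$, and then fix the sign of the nonzero eigenvalue by Proposition~\ref{prop:G-asymp}{\ts}(1). If anything, your citation of Proposition~\ref{prop:G-asymp}{\ts}(2) for the nilpotency exclusion is the more precise reference (the paper points to part (1) there, though it is part (2) --- equality of algebraic and geometric multiplicity of the eigenvalue $0$ --- that actually rules out $Q^2=\nix$ with $Q\ne\nix$).
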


\begin{proof}
  (1) $\Longleftrightarrow$ (2) is standard, while (1) $\Rightarrow$
  (3) follows from $Q\ne \nix$ and $Q^2 = -c \ts\ts Q$.  Now, let us
  assume (3), where $\dim (\alg (Q)) =1$ implies $Q^2 = - c\ts \ts Q$
  for some $c \in \RR$.  Via
  Proposition~\mbox{\ref{prop:G-asymp}{\ts}(1)}, we see that
  $Q^2=\nix$ implies $Q=\nix$, which would contradict
  $\dim(\alg(Q))=1$. So, we have $c\ne 0$ and
  $Q (Q + c \ts \one) = \nix$, which tells us that $z (z+c)$ is the
  minimal polynomial of $Q$. As $-c \in \RR$ is then an eigenvalue of
  $Q$, another application of Proposition~\ref{prop:G-asymp}{\ts}(1)
  implies $c>0$, so (1) holds, and we are done.
\end{proof}

Let us return to Fact~\ref{fact:connection}. Viewed differently, the
generator of an embedding has to satisfy a commutativity
condition. Generically, this will mean that $Q$ is of the same `type'
as $A$, and it is thus an interesting subproblem of the general
embedding question to look at embeddability under a natural constraint
on $Q$, as discussed in \cite{Jeremy,SFSJ,FSJW}.  This can be of
practical relevance, for instance in biological applications; see
\cite[Ch.~7]{Steel} and references therein.

Since commutativity of matrices will be important in what follows, let
us recall a classic result from algebra
\cite[Chs.~III.15{\ts}--18]{Jacobson}, which is due to Frobenius.
Here, we specialise it to the case of real matrices; see also
\cite[Thm.~5.15 and Cor.~5.16]{AW} for another presentation.  For its
formulation, we employ the standard eigenspace notation
$V^{}_{\lambda} \defeq \{ x \in \CC^d : Bx = \lambda x \}$ for
$\lambda \in \CC$ and a given matrix
$B \in \Mat (d,\RR) \subset \Mat (d,\CC)$.

\begin{fact}\label{fact:comm}
  For\/ $B \in \Mat (d,\RR)$, the following properties are
  equivalent and generic.
\begin{enumerate}\itemsep=2pt
 \item The characteristic polynomial of $B$ is also its minimal
   polynomial.
 \item The relation\/ $\dim (V^{}_{\lambda}) =1$ 
   holds for every\/ $\lambda \in \sigma (B)$.    
 \item $B$ is\/ \emph{cyclic:} $\{ u, Bu, \ldots , B^{d-1} u \}$
   is a basis of\/ $\RR^d$ for some\/ $u\in\RR^d$.
 \item The matrix ring\/
   $\ts \Co (B) \defeq \{ C \in \Mat (d,\RR) : [B,C]=\nix \}$ is
   Abelian.
 \item One has\/ $\Co (B) = \RR [B]$, the ring of polynomials in 
   $B$ with coefficients in\/ $\RR$.  \qed
\end{enumerate}
\end{fact}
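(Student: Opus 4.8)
The plan is to establish the five conditions as equivalent by combining the two classical canonical forms over $\CC$ with one cyclic-vector computation and one contrapositive argument. Concretely, I would first record $(1)\Leftrightarrow(2)$ and $(1)\Leftrightarrow(3)$ directly from canonical forms, then run $(3)\Rightarrow(5)\Rightarrow(4)$, and finally close the loop with the only nontrivial implication $(4)\Rightarrow(1)$. The equivalence $(1)\Leftrightarrow(2)$ is read off the Jordan normal form of $B$ over $\CC$: the condition $\dim (V^{}_{\lambda})=1$ says that each eigenvalue contributes a single Jordan block, which is exactly the statement that, for every $\lambda\in\sigma (B)$, the size of the largest block equals the algebraic multiplicity; comparing the product forms of the minimal and characteristic polynomials then yields $(1)$, and conversely. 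The equivalence $(1)\Leftrightarrow(3)$ is the rational canonical form statement that $B$ is similar to a single companion matrix precisely when its invariant factors collapse to one, i.e.\ when the minimal polynomial has degree $d$; the last standard basis vector of the companion block is then a cyclic vector, and conversely a cyclic vector forces the minimal polynomial to have degree $d$.

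Next comes $(3)\Rightarrow(5)$, which is a short and routine computation. Given a cyclic vector $u$, any $C\in\Co (B)$ is determined by the single image $Cu$: writing $Cu = p(B)\ts u$ for a polynomial $p$ with $\degr p < d$ (possible since $\{u,Bu,\ldots,B^{d-1}u\}$ is a basis), the commutation $[B,C]=\nix$ gives $C B^k u = B^k C u = p(B)\ts B^k u$ for all $k$, so $C$ and $p(B)$ agree on a basis and hence $C = p(B)\in\RR [B]$. The reverse inclusion $\RR [B]\subseteq\Co (B)$ is immediate, giving $\Co (B)=\RR [B]$. The implication $(5)\Rightarrow(4)$ is then trivial, since $\RR [B]$ is commutative.

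The main obstacle is the remaining implication $(4)\Rightarrow(1)$, which I would prove by contraposition, showing that a non-cyclic matrix has a non-Abelian centraliser. The first point is that $\Co (B)$ is Abelian over $\RR$ if and only if its complexification is Abelian over $\CC$: a real basis of $\Co (B)$ is a complex basis of the complex centraliser, and the pairwise commutators coincide, so one may safely pass to $\CC$. Assuming $(1)$ fails, some $\dim (V^{}_{\lambda})\geqslant 2$, so over $\CC$ the matrix $B$ has at least two Jordan blocks for the eigenvalue $\lambda$. Since the projections onto the generalised eigenspaces commute with $B$ and hence lie in the centraliser, it suffices to work on a single generalised eigenspace, where $B=\lambda\ts\one + N$ with $N$ nilpotent and $\Co (B)=\Co (N)$. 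Using bases adapted to two Jordan blocks of $N$, I would exhibit two block-shift maps $C^{}_1,C^{}_2$ that each commute with $N$ but satisfy $C^{}_1 C^{}_2 \ne C^{}_2 C^{}_1$ (for instance, partial shifts sending one block into the other and back, whose composites disagree on a lowest basis vector). This produces the required non-commuting pair and contradicts $(4)$; verifying that these maps indeed commute with $N$ is the one place where care with the block indices is needed.

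Finally, for genericity I would argue that non-cyclicity is a Zariski-closed condition of positive codimension. Indeed, $\det\bigl[\ts u \mid Bu \mid \cdots \mid B^{d-1}u\ts\bigr]$ is a polynomial in the entries of $B$ and $u$, and by $(3)$ the matrix $B$ is cyclic precisely when this polynomial is not identically zero in $u$, equivalently when at least one of its coefficients (each a polynomial in the entries of $B$) is nonzero. The non-cyclic matrices therefore form the common zero locus of these coefficient polynomials, which is proper because companion matrices are cyclic; hence the equivalent properties hold off a set that is both nowhere dense and of Lebesgue measure zero.
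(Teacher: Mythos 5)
Your proof is correct, and every step checks out: the Jordan--form comparison gives $(1)\Leftrightarrow(2)$, the rational canonical form gives $(1)\Leftrightarrow(3)$, the evaluation argument at a cyclic vector gives $(3)\Rightarrow(5)$, the implication $(5)\Rightarrow(4)$ is trivial, and the contrapositive of $(4)\Rightarrow(1)$ via complexification and a pair of non-commuting block shifts between two Jordan blocks of a common eigenvalue is the standard construction; the genericity argument, via the Zariski-closed locus of those $B$ for which $\det\bigl[\ts u \mid Bu \mid \cdots \mid B^{d-1}u\ts\bigr]$ vanishes identically in $u$, is also sound. For comparison: the paper offers no proof of this Fact at all --- it is recorded as a classical result of Frobenius, with pointers to \cite{Jacobson} and \cite{AW}, and stated with a QED box. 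So your proposal does not diverge from the paper's argument so much as supply the self-contained proof that the paper delegates to the literature; it is essentially the proof found in those references. The two spots you flagged as needing care are indeed the only delicate ones, and both resolve as you indicate: passing to $\CC$ in $(4)\Rightarrow(1)$ is legitimate because $B$ is real, so a complex matrix commutes with $B$ precisely when its real and imaginary parts do, whence $\Co (B)$ is Abelian over $\RR$ if and only if its complexification is Abelian over $\CC$; and for Jordan blocks of unequal sizes $p>q$, the shift from the larger block into the smaller one must be anchored at the top (send the $i$-th basis vector of the large block to the $(i-(p-q))$-th of the small one, and to $0$ when $i \leqslant p-q$) so that it commutes with the nilpotent part, after which the two composites visibly disagree and $(4)$ fails.
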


Below, we will call matrices of this kind \emph{cyclic}.  In
particular, Fact~\ref{fact:comm} applies to all matrices with
\emph{simple spectrum}, which means distinct eigenvalues. In the case
of degeneracies, the existence of additional elements in $\Co (B)$,
which is known as the \emph{centraliser} (or the \emph{commutant}) of
$B$, can be seen from a repetition of one of its Jordan blocks. In
this context, the last claim of Fact~\ref{fact:connection} means that
$A=M - \one$ with $M=\ee^Q$ implies $A\in\Co (Q)$ as well as
$Q\in \Co(A)$. This has the following consequence.

\begin{lemma}\label{lem:type}
  If\/ $M = \ee^Q$ is an embeddable Markov matrix that is
  cyclic, one has\/ $Q \in \alg (A)$ with\/ $A=M - \one$.
\end{lemma}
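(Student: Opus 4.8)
The plan is to combine the commutativity that is built into any embedding with the cyclicity hypothesis, so as to force $Q$ into the polynomial ring $\RR[A]$, and then to strip off the spurious scalar term by exploiting the row-sum structure.

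First I would invoke the last assertion of Fact~\ref{fact:connection}: since $M=\ee^Q$ is embeddable, one has $[A,Q]=\nix$, that is, $Q\in\Co(A)$. Next I would note that $A=M-\one$ is cyclic exactly when $M$ is, because passing from $M$ to $M-\one$ leaves the centraliser unchanged and merely translates the minimal and characteristic polynomials in their variable, so the equivalent conditions of Fact~\ref{fact:comm} transfer from $M$ to $A$. Hence part~(5) of Fact~\ref{fact:comm} gives $\Co(A)=\RR[A]$, and therefore $Q\in\RR[A]$. As $A$ is cyclic, its minimal polynomial has degree $d$, so $\RR[A]$ is spanned by $\one,A,\ldots,A^{d-1}$, and I may write
\[
   Q \, = \, c_0\ts\one + \sum_{\ell=1}^{d-1} c_\ell\, A^\ell .
\]

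It then remains to show that $c_0=0$, and this is where the generator structure enters. The key observation is that the space $\cA_0$ of real matrices with vanishing row sums is closed under multiplication, so from $A\in\cA_0$ one gets $A^\ell\in\cA_0$ for every $\ell\geqslant 1$; in particular $\alg(A)\subseteq\cA_0$, and since $\one\notin\cA_0$ the decomposition $\RR[A]=\RR\one\oplus\alg(A)$ is direct. Now $Q$ is a generator, hence $Q\in\cA_0$, and likewise $\sum_{\ell\geqslant 1}c_\ell A^\ell\in\cA_0$; subtracting these gives $c_0\ts\one\in\cA_0$. Because $\one$ has row sums equal to $1$, this forces $c_0=0$, so that $Q=\sum_{\ell=1}^{d-1}c_\ell A^\ell\in\alg(A)$, as claimed.

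I expect no genuine obstacle here. The only step that truly uses the hypothesis is the identification $\Co(A)=\RR[A]$: without cyclicity, $Q$ could commute with $A$ while failing to be a polynomial in $A$, and the conclusion would break down. The concluding row-sum argument is entirely routine once one records that $\cA_0$ is a (non-unital) algebra that excludes $\one$.
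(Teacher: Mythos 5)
Your proof is correct and follows essentially the same route as the paper's: invoke Fact~\ref{fact:connection} for $[A,Q]=\nix$, use cyclicity via Fact~\ref{fact:comm} to get $Q\in\RR[A]$, and then intersect with $\cA_0$ to conclude $Q\in\alg(A)$. The paper states this in three lines, leaving implicit both the transfer of cyclicity from $M$ to $A$ and the identification $\RR[A]\cap\cA_0=\alg(A)$; your write-up simply makes those two steps explicit.
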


\begin{proof}
  For the matrix $A$, we are in the situation of Fact~\ref{fact:comm}.
  Then, by Fact~\ref{fact:connection}, the rate matrix $Q$ must be an
  element of $\RR [A]\cap \cA_0$, which means $Q \in \alg (A)$.
\end{proof}

The significance of this result emerges from the following
approximation concept.

\begin{defin}\label{def:stable}
  Consider an embeddable matrix $M\in\cM_d$, and set $A=M-\one$.  A
  property of $M$, say Property $\mathsf{S}$, is called \emph{stable}
  if it satisfies the following three conditions:
\begin{enumerate}\itemsep=2pt
\item Property\/ $\mathsf{S}$ has a well-defined analogue for
  generators, which holds for $A$ and for all generators in $\alg (A)$;
\item Property\/ $\mathsf{S}$ is preserved under taking limits within
  $\Mat (d,\RR)$;
\item $M$ can arbitrarily well be approximated (in some matrix norm)
  by other embeddable Markov matrices with Property $\mathsf{S}$ that
  are also cyclic (in the sense of Fact~\textnormal{\ref{fact:comm}}).
\end{enumerate}      
When only (1) and (2) are satisfied, we call the property
\emph{semi-stable}.
\end{defin}

As an example, think of a symmetric Markov matrix, so $M^T = M$, where
$A=M-\one$ and all element of $\alg (A)$ are then symmetric as well.
Clearly, the limit of a convergent sequence of symmetric matrices is
again symmetric.  So, being symmetric is a semi-stable property. It is
not a stable one though, since symmetric matrices have real
eigenvalues, and the existence of negative eigenvalues with even
multiplicity can cause a problem, as we shall see.
It is thus natural to also assume that $M$ has non-negative
spectrum. Then, it is the limit of sufficiently many convergent
sequences of symmetric matrices, in the sense that every neighbourhood
of such a matrix with non-negative eigenvalues also contains further
symmetric ones that are cyclic. This can be shown by a deformation
argument, and still remains true within $E_d$; see
Remark~\ref{rem:I-mat} below.  In contrast, the property of having
simple spectrum already fails to be semi-stable, as does having
geometric multiplicity $1$ for all eigenvalues.

Let us briefly comment on an algebraic consequence of
Definition~\ref{def:stable}. When Property $\mathsf{S}$ is a
\emph{linear} property (such as being symmetric), semi-stability means
that the $\mathsf{S}$-generators span a \emph{Jordan algebra}. This
follows from the simple observation that $A^2$, $B^2$ and $(A+B)^2$
having Property $\mathsf{S}$ then implies that $(AB+BA)$ has Property
$\mathsf{S}$ as well.

\begin{prop}\label{prop:extra}
  Let\/ $M$ be an embeddable Markov matrix.  If $\ts M$ has a stable
  property, say Property\/ $\mathsf{S}$, $M$ can be embedded as\/
  $M=\ee^Q$ with a generator\/ $Q$ having Property\/ $\mathsf{S}$. If,
  in addition, $M$ is cyclic, every generator\/ $Q$ with\/ $\ee^Q=M$
  must be an\/ $\mathsf{S}$-generator.
\end{prop}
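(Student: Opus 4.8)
The statement splits into an existence claim, that any stable Property $\mathsf{S}$ is realised by at least one generator of $M$, and a rigidity claim, that in the cyclic case every generator of $M$ inherits $\mathsf{S}$. I would prove the rigidity claim first, since it feeds directly into the existence argument. So assume $M$ is embeddable, cyclic, and carries the stable Property $\mathsf{S}$, and let $Q$ be any generator with $\ee^Q = M$. Because $M$ is cyclic, Lemma~\ref{lem:type} gives $Q \in \alg(A)$ with $A = M - \one$, and condition~(1) of Definition~\ref{def:stable} says that the generator analogue of $\mathsf{S}$ holds for every generator in $\alg(A)$. In particular it holds for $Q$, so $Q$ is an $\mathsf{S}$-generator; this already settles the second sentence of the proposition.

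For the existence claim, $M$ is no longer assumed cyclic, so the plan is to approximate and pass to a limit. Using condition~(3) of Definition~\ref{def:stable}, I would pick embeddable, cyclic Markov matrices $M_n$ with Property $\mathsf{S}$ and $M_n \to M$. Each $M_n$ is embeddable, so fix a generator $Q_n$ with $\ee^{Q_n} = M_n$; applying the rigidity argument above to $M_n$ (which is cyclic and again carries the stable Property $\mathsf{S}$) shows $Q_n \in \alg(A_n)$ with $A_n = M_n - \one$ and that $Q_n$ is an $\mathsf{S}$-generator. The aim is then to extract a convergent subsequence $Q_{n_k} \to Q$: continuity of the matrix exponential yields $\ee^Q = \lim_k M_{n_k} = M$, the set of generators is closed in $\Mat(d,\RR)$ (being cut out by the closed conditions of non-negative off-diagonal entries and vanishing row sums), so $Q$ is again a generator, and condition~(2) of Definition~\ref{def:stable} guarantees that $Q$ keeps Property $\mathsf{S}$. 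This delivers the required $\mathsf{S}$-generator for $M$.

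The hard part will be the compactness step, namely the boundedness of $\{ Q_n \}$ needed before Bolzano--Weierstrass can be invoked. This is not automatic, since a fixed Markov matrix has logarithms differing by the branch chosen on each complex eigenvalue, and unbounded branches would force $\| Q_n \| \to \infty$. I would control this spectrally. By Proposition~\ref{prop:G-asymp} and its proof, $\sigma(Q_n)$ lies in a disk $B_{\mu_n}(-\mu_n)$ tangent to the imaginary axis at the origin; since $M$ is embeddable, $0 \notin \sigma(M)$, so the eigenvalues of $M_n$ stay bounded away from $0$ and the real parts of $\sigma(Q_n)$, being logarithms of their moduli, converge and remain bounded. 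It then suffices to fix the branches consistently, for instance by choosing, for each eigenvalue of $M_n$, the logarithm closest to the matching eigenvalue of a fixed reference generator of the embeddable $M$; this keeps the imaginary parts of $\sigma(Q_n)$ bounded as well. Together with $Q_n \in \alg(A_n)$ and $A_n \to A$, bounded spectra confine the $Q_n$ to a bounded set. I expect the only genuinely delicate point to be the bookkeeping of these branch choices when $M$ has repeated eigenvalues, where the clean Lagrange interpolation on the spectrum of $A_n$ must be replaced by its confluent analogue.
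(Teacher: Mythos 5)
Your rigidity argument and your overall limiting scheme coincide with the paper's proof: Lemma~\ref{lem:type} plus Condition~(1) of Definition~\ref{def:stable} settle the cyclic case, and for general $M$ one approximates by cyclic, embeddable $\mathsf{S}$-matrices via Condition~(3), extracts a convergent subsequence of generators, and finishes with continuity of $\exp$ and Condition~(2). The divergence --- and the genuine gap --- lies in your compactness step. Your branch-fixing device produces matrix \emph{logarithms} of $M_n$, not Markov \emph{generators}: a matrix $X$ with $\ee^X = M_n$ and prescribed eigenvalue branches has no reason to satisfy the Metzler condition, and, as the paper stresses right after Proposition~\ref{prop:necessary}, that positivity requirement is the whole difficulty of the embedding problem. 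But you need the $Q_n$ to be generators twice over: Condition~(1) attributes Property~$\mathsf{S}$ only to \emph{generators} in $\alg(A_n)$, and the limit $Q$ must be a generator for $M=\ee^Q$ to count as an embedding. If instead you stand by your opening move, ``fix a generator $Q_n$ with $\ee^{Q_n}=M_n$'', then $\sigma(Q_n)$ is whatever it is --- you are not free to choose branches --- and you give no argument that among the finitely many generators of $M_n$ (Fact~\ref{fact:discrete}) there is one of uniformly bounded norm. Finally, the inference ``bounded spectra together with $Q_n\in\alg(A_n)$ and $A_n \to A$ confine the $Q_n$ to a bounded set'' fails exactly in the case at stake: existence beyond rigidity only matters when $M$ is \emph{not} cyclic, hence has repeated eigenvalues, and then the interpolation coefficients expressing an element of $\alg(A_n)$ through its spectrum blow up as the eigenvalues of $A_n$ coalesce. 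What you defer as ``bookkeeping'' is the crux, not a technicality.

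The step can be repaired, and in a way that is more elementary than both your spectral route and the paper's appeal to Fact~\ref{fact:discrete} plus the local homeomorphism property of $\exp$: for a Markov generator, the Metzler property and the vanishing row sums give $\lvert (Q_n)_{ii}\rvert = \sum_{j\ne i} (Q_n)_{ij}$, hence
\[
  \sum_{i,j} \bigl\lvert (Q_n)_{ij} \bigr\rvert
  \, = \, 2 \ts \lvert \tr (Q_n) \rvert
  \, = \, 2 \ts \bigl\lvert \log \det (M_n) \bigr\rvert ,
\]
which converges to $2\ts\lvert \log \det (M) \rvert < \infty$ because $\det(M_n) \to \det(M) > 0$ by Proposition~\ref{prop:necessary}{\ts}(2). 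So \emph{every} choice of generators $Q_n$ is automatically a bounded sequence --- no branch choices, no control of $\alg(A_n)$ needed --- and then Bolzano--Weierstrass, closedness of the set of generators, continuity of $\exp$, and Condition~(2) complete your argument exactly as you wrote it.
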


\begin{proof}
  If $M$ itself is cyclic, its centraliser is Abelian, and we are in
  the situation of Lemma~\ref{lem:type}, so the first claim is clear
  in this case. Otherwise, Condition (3) of
  Definition~\ref{def:stable} implies that there is a sequence of
  embeddable, cyclic Markov matrices \emph{with}
  Property~$\mathsf{S}$, say $(M_j)^{}_{j\in\NN}$, such that
  $\lim_{j\to\infty} M_j = M$.
   
  Now, for all $j\in\NN$, we have $M_j = \ee^{Q_j}$ with
  $Q_j \in \alg (M_j - \one)$ by Lemma~\ref{lem:type}, where all $Q_j$
  have Property $\mathsf{S}$ due to Condition (1) of
  Definition~\ref{def:stable}. The set of solutions of
  $M_j = \ee^{Q'}$ with $Q'$ a Markov generator is discrete and
  finite, see Fact~\ref{fact:discrete} below, and the exponential map
  is locally a homeomorphism. Consequently, there is a subsequence
  $(j_m)^{}_{m\in\NN}$ of indices such that
  $(Q_{j^{}_m})^{}_{m\in\NN}$ converges, where
  $Q = \lim_{m\to\infty} Q_{j^{}_m}$ is a generator with property
  $\mathsf{S}$. By construction and Condition (2) of
  Definition~\ref{def:stable}, we get $M = \ee^Q$ as claimed.
  
  The second claim is now another consequence of 
  Lemma~\ref{lem:type} and Fact~\ref{fact:comm}.
\end{proof}

Notice that, by \cite[Prop.~4]{King}, the relative interior of $E_d$
as a subset of $\cM^{>}_{d}$ is non-empty, which means that the
topological dimension of $E_d$ is the maximal one, namely
$d\ts (d\! - \!1)$. This will actually help to identify when
Condition (3) of Definition~\ref{def:stable} is satisfied.

Let us also note the following variant of \cite[Cor.~10]{Davies}. The
proof is essentially the same, with the only difference coming from
the more general commutativity result in Fact~\ref{fact:comm}, that
is, using cyclic rather than simple matrices, and thus extends
\cite[Thm.~1.1]{Joh2}.

\begin{fact}\label{fact:discrete}
  Let\/ $M$ be a Markov matrix that is cyclic.  Then, the solutions
  of\/ $M = \ee^Q$ form a discrete set in $\cA_0$, and these solutions
  commute with one another and with\/ $M$. Moreover, only a finite
  number of the solutions can be Markov generators.  \qed
\end{fact}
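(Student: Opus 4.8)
The plan is to reduce the entire statement to a computation inside the commutative algebra $\RR[M]$, and then to control the spectra of the logarithms. First I would observe that any $Q$ with $\ee^Q = M$ commutes with $\ee^Q = M$, so $Q \in \Co(M)$; since $M$ is cyclic, Fact~\ref{fact:comm} gives $\Co(M) = \RR[M]$, which is Abelian. This settles the commutativity claims at once: every solution lies in $\RR[M]$, hence the solutions commute with one another and with $M$. Writing such a $Q$ as $p(M)$ and evaluating on the all-ones column vector $v$ (for which $Mv = v$), the identity $\ee^{p(1)} = 1$ together with the reality of $p(1)$ forces $p(1) = 0$, so $Qv = 0$ and thus $Q \in \cA_0$, as the statement requires.

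For discreteness I would pin down $\sigma(Q)$. By the spectral mapping theorem, each eigenvalue of $Q$ has the form $\log\lambda + 2\pi\ii k$ with $\lambda \in \sigma(M)$ and $k \in \ZZ$; since $M$ is cyclic, each distinct eigenvalue $\lambda_i$ of $M$ sits in a single Jordan block, so $Q = p(M)$ carries exactly one eigenvalue $p(\lambda_i) = \log\lambda_i + 2\pi\ii k_i$ over each $\lambda_i$. Because the $\lambda_i$ are pairwise distinct, no difference $p(\lambda_i) - p(\lambda_j)$ with $i \ne j$ can lie in $2\pi\ii\ZZ$ (equal modulus and argument would force $\lambda_i = \lambda_j$). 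Hence $\mathrm{ad}_Q$ has no eigenvalue in $2\pi\ii\ZZ \setminus \{0\}$, the differential of $\exp$ at $Q$ is invertible, and the inverse function theorem makes $\exp$ a local homeomorphism near each solution. The solution set is therefore discrete; equivalently, the solutions are parametrised by the admissible integer tuples $(k_i)$, the nilpotent part on each block being then uniquely determined. This is exactly the place where cyclicity, rather than merely simple spectrum as in \cite[Cor.~10]{Davies}, is used.

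Finiteness of the Markov generators among these solutions is the step I expect to require the most care, since the real parts $\log|\lambda_i| \le 0$ of the eigenvalues are already fixed and one must bound the imaginary parts $\arg\lambda_i + 2\pi k_i$ uniformly in the branch. For a generator the off-diagonal entries are non-negative and $Q_{ii} \le 0$, so $\sum_i (-Q_{ii}) = -\tr(Q) = -\log\det(M)$ is a fixed non-negative number; in particular $\mu \defeq \max_i(-Q_{ii}) \le \mu_0 \defeq -\log\det(M)$, independently of the solution. The Gershgorin estimate from the proof of Proposition~\ref{prop:G-asymp} then confines $\sigma(Q)$ to the disk $B_{\mu_0}(-\mu_0)$, so $|\arg\lambda_i + 2\pi k_i| \le \mu_0$ for every $i$. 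Only finitely many integers $k_i$ survive this bound, leaving finitely many generator solutions. The crux is precisely this uniform control of $\mu$: once the trace identity fixes $\sum_i(-Q_{ii})$ independently of the chosen logarithm, the Gershgorin disk converts it into a uniform bound on the imaginary parts.
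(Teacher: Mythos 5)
Your proposal is correct and follows essentially the same route that the paper delegates to \cite[Cor.~10]{Davies} for cyclic rather than simple matrices: commutativity and membership in $\RR[M]$ come from Fact~\ref{fact:comm} exactly as the paper indicates, discreteness from the branch parametrisation of the logarithm (equivalently, invertibility of the differential of $\exp$ at each solution), and finiteness from a uniform spectral bound on generator solutions. Your self-contained trace-plus-Gershgorin step, confining $\sigma(Q)$ to $B_{\mu_0}(-\mu_0)$ with $\mu_0 = -\log\det(M)$ independently of the chosen logarithm, is a clean way of carrying out the final bound that the paper leaves to the cited reference.
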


Finally, let us recall a diagonalisability result that will come in
handy later.

\begin{fact}\label{fact:diag}
  Let\/ $B\in\Mat (d,\CC)$. If\/ $M=\ee^B$ is diagonalisable, then so
  is\/ $B$.
\end{fact}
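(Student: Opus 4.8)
The plan is to reduce to the Jordan normal form of $B$ and to use the standard criterion that a matrix is diagonalisable precisely when all of its Jordan blocks have size $1$. Since similarity commutes with the exponential, $\ee^{PBP^{-1}} = P\ts\ee^{B}\ts P^{-1}$, and diagonalisability is a similarity invariant, I would assume without loss of generality that $B$ is already in Jordan canonical form, hence block diagonal with blocks $J_k(\lambda) = \lambda\one + N$, where $N$ is the nilpotent shift of size $k$. The exponential then acts blockwise, so it suffices to analyse $\ee^{J_k(\lambda)}$ for a single block.

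The key step is the computation $\ee^{J_k(\lambda)} = \ee^{\lambda}\ts\ee^{N}$, where $\ee^{N} = \one + N + \tfrac12 N^2 + \dots$ is upper triangular with $1$ on the diagonal and $1$ on the first superdiagonal. From this I would deduce $\ee^{J_k(\lambda)} - \ee^{\lambda}\one = \ee^{\lambda}\ts N\bigl(\one + \tfrac12 N + \dots\bigr)$, and since the second factor is $\one$ plus a nilpotent and hence invertible, this difference has the same rank as $N$, namely $k-1$. Thus the eigenvalue $\ee^{\lambda}$ has geometric multiplicity $1$ in $\ee^{J_k(\lambda)}$, so the latter is itself a single Jordan block of size $k$, which is diagonalisable if and only if $k=1$.

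Assembling the blocks, the Jordan form of $\ee^{B}$ is the direct sum of the blocks $J_k(\ee^{\lambda})$ obtained from those of $B$, so each block of $B$ of size $k$ yields a block of $\ee^{B}$ of the same size $k$. The main obstacle I anticipate is the possibility that distinct eigenvalues $\lambda\ne\mu$ of $B$ satisfy $\ee^{\lambda}=\ee^{\mu}$ (when $\lambda-\mu\in 2\pi\ts\ii\ts\ZZ$), so that several blocks might appear to merge into a larger block of $\ee^{B}$; I would stress that no such merging occurs, since the Jordan form of a block-diagonal matrix is simply the collection of the Jordan forms of its blocks, with sizes preserved. Hence $\ee^{B}$ has a block of size $\geqslant 2$ as soon as $B$ does, and diagonalisability of $\ee^{B}$ forces every block of $B$ to have size $1$. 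As an alternative I would record the slicker argument via uniqueness of the multiplicative Jordan--Chevalley decomposition: writing $B = S + N$ with $S$ semisimple, $N$ nilpotent and $[S,N]=\nix$ gives $\ee^{B} = \ee^{S}\ts\ee^{N}$ with $\ee^{S}$ semisimple, $\ee^{N}$ unipotent, and the two commuting, so diagonalisability of $\ee^{B}$ forces $\ee^{N}=\one$ and thus $N=\nix$.
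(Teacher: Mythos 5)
Your proposal is correct, and your main argument takes a genuinely different route from the paper. The paper's proof uses the additive Jordan--Chevalley decomposition $B = D + N$ with $[D,N]=\nix$: it writes $\ee^N = \ee^{-D} M$, observes that this is diagonalisable (a product of commuting diagonalisable matrices), and concludes $N=\nix$ because $\ee^N - \one$ is both nilpotent and diagonalisable, hence zero. Your main proof instead reduces to the Jordan normal form and analyses a single block, showing via the rank computation $\ee^{J_k(\lambda)} - \ee^{\lambda}\one = \ee^{\lambda} N \bigl(\one + \tfrac12 N + \cdots\bigr)$ that $\ee^{J_k(\lambda)}$ has its sole eigenvalue $\ee^{\lambda}$ with geometric multiplicity $1$, so it is a single Jordan block of size $k$. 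This is more computational than the paper's argument, but it buys strictly more: it shows that the exponential preserves Jordan block sizes, so the entire Jordan structure of $\ee^B$ mirrors that of $B$, not merely the yes/no question of diagonalisability. The one genuine subtlety on this route --- possible merging of blocks when $\lambda\ne\mu$ but $\ee^{\lambda}=\ee^{\mu}$ --- you dispose of correctly, since the Jordan form of a block-diagonal matrix is the union of the Jordan forms of its blocks by uniqueness of the normal form. Finally, the alternative you record at the end (uniqueness of the multiplicative Jordan--Chevalley decomposition, forcing $\ee^N=\one$ and hence $N=\nix$) is essentially the paper's proof in multiplicative guise; so you have both reproduced the paper's argument and supplied an independent, more informative one.
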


\begin{proof}
  Via the Jordan--Chevalley decomposition, $B$ can be written as
  $B=D+N$ with $D$ diagonalisable, $N$ nilpotent, and $[D,N]=\nix$.
  In particular, the minimal polynomial of $N$ is $z^k$, for some
  $1\leqslant k \leqslant d$.  Then, $\ee^B = \ee^D \ee^N$ where
  $\ee^D$ is still diagonalisable, while $\ee^N = \one + N'$, with
  $N'=\sum_{m=1}^{k-1} \frac{1}{m!} N^m$ again being nilpotent.

  Since $M$ is diagonalisable, and $\ee^N = \ee^{-D} M$ with
  $[D,M]=\nix$, the matrix $\ee^N$ is diagonalisable as well, and we
  must have $N'=\nix$, as this is the only diagonalisable nilpotent
  matrix.  But this implies $k=1$, as any larger $k$ would give a
  contradiction to $z^k$ being the minimal polynomial of $N$. This
  means $N=\nix$ together with $B$ diagonalisable.
\end{proof}

\begin{remark}
   In terms of any explicit Jordan form of $B$ versus $\ee^B$,
   Fact~\ref{fact:diag} can also be seen as a consequence of
   \cite[Thm.~1.36{\ts}(a)]{Higham}, since the derivative of
   $\ee^x$ has no zero in $\CC$.  \exend
\end{remark}

We are now set to embark on the Markov embedding problem.

\section{Two-dimensional Markov matrices}\label{sec:two}

The situation for $d=2$ is particularly simple, because both
eigenvalues are real and the determinant condition from
Proposition~\ref{prop:necessary}{\ts}\eqref{part:two} 
actually is sufficient to
give a complete characterisation of $E^{}_2$, via
$E^{}_2 = \cM^{>}_{2}$. We recall this well-known result and give a
short explicit proof, both for the convenience of the reader and for
later use; for background on the underlying calculations around matrix
functions, we refer to \cite{Higham}.

\begin{theorem}[{Kendall; see \cite{King}}]\label{thm:Kendall}
  For a Markov matrix\/
  $M=\left( \begin{smallmatrix} 1-a & a \\ b & 1-b \end{smallmatrix}
  \right)$
  with\/ $a, b \in [0,1]$, the following statements are equivalent.
\begin{enumerate}\itemsep=2pt
\item $M$ is embeddable.
\item
  $0 < \det (M) = 1 - a - b \leqslant 1$.
\item 
  $1 < \tr (M) \leqslant 2$, which means\/
  $0 \leqslant a + b < 1$.
\item $M$ has positive spectrum, that is, positive eigenvalues only.  
\end{enumerate}
\end{theorem}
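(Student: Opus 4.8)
The plan is to dispatch the three spectral conditions first, since they are mere algebraic reformulations of one another, and then to concentrate all effort on the single substantial implication, that a positive spectrum forces embeddability. I would begin by computing the characteristic polynomial of $M$ as $(\lambda-1)\bigl(\lambda-(1-a-b)\bigr)$, so that $\sigma (M) = \{1, 1-a-b\}$ with $\det (M) = 1-a-b$ and $\tr (M) = 2-a-b = 1 + \det (M)$. Since $a+b\geqslant 0$ automatically gives $\det (M) \leqslant 1$, the content of \eqref{part:two}-type condition (2) reduces to $\det (M) > 0$; as one eigenvalue is already $1$, this in turn says precisely that both eigenvalues are positive, which is (4). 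The equivalence of (2) and (3) is immediate from $\tr (M) = 1 + \det (M)$. Hence (2) $\Leftrightarrow$ (3) $\Leftrightarrow$ (4) needs nothing beyond these identities, and (1) $\Rightarrow$ (2) is exactly Proposition~\ref{prop:necessary}\,\eqref{part:two}.

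The real work lies in (2) $\Rightarrow$ (1). Here I would set $A = M-\one = \left(\begin{smallmatrix} -a & a \\ b & -b\end{smallmatrix}\right)$, which is already a Markov generator by Fact~\ref{fact:connection}, and record the key identity $A^2 = -(a+b)\,A$, so that $A^m = \bigl(-(a+b)\bigr)^{m-1} A$ for all $m\geqslant 1$. Consequently $\alg (A)$ is at most one-dimensional, and Lemma~\ref{lem:type} together with Fact~\ref{fact:one-dim} motivates the ansatz $Q = t\,A$ for a scalar $t\geqslant 0$. Summing the exponential series with the above identity then collapses it to the closed form $\ee^{tA} = \one + \frac{1-\ee^{-(a+b)t}}{a+b}\,A$ in the case $a+b>0$.

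Matching this against $M = \one + A$ reduces the whole problem to the scalar equation $1 - \ee^{-(a+b)t} = a+b$, whose unique solution is $t = -\log (1-a-b)/(a+b)$; under condition (2) one has $0 < a+b < 1$, hence $1-a-b \in (0,1)$ and therefore $t>0$, while the degenerate case $a+b=0$ simply forces $M=\one=\ee^{\nix}$. The step one might expect to be the main obstacle, namely verifying that the logarithm $Q$ meets the Metzler positivity constraint (the very constraint that makes the problem genuinely hard once $d>2$), is here trivial: because $A$ is already a generator and $t>0$, the scalar multiple $Q = t\,A$ has nonnegative off-diagonal entries and vanishing row sums, so it is a generator with $\ee^Q = M$. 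This establishes (2) $\Rightarrow$ (1) and closes the cycle of equivalences.
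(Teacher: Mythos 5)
Your proposal is correct, and the easy part ((1) $\Rightarrow$ (2) $\Leftrightarrow$ (3) $\Leftrightarrow$ (4)) is handled exactly as in the paper. For the substantial implication (2) $\Rightarrow$ (1), however, you run the computation in the opposite direction from the paper's proof. The paper works \emph{backwards} from $M$: it defines $Q \defeq \log(M) = \log(\one + A)$ as a matrix power series, uses the spectral radius $a+b<1$ of $A$ to justify norm convergence of that series, and then collapses it via $A^2 = -(a+b)A$ to obtain the closed form \eqref{eq:log-calc}, checking at the end that the positive scalar prefactor makes $Q$ a generator. You instead work \emph{forwards} from the generator side: you posit $Q = tA$, collapse the (always convergent) exponential series to $\ee^{tA} = \one + \frac{1-\ee^{-(a+b)t}}{a+b}A$, and solve the scalar equation $1-\ee^{-(a+b)t}=a+b$ for $t>0$. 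The two routes hinge on the same identity $A^2 = -(a+b)A$ and produce the same generator, but yours sidesteps the convergence question for the logarithm entirely, trading it for the solvability of a scalar equation --- which is precisely where condition (2) enters, since $1-a-b$ must lie in $(0,1)$ for $t$ to exist and be positive. Notably, the paper itself presents your direction immediately \emph{after} its proof as ``the complementary point of view'' leading to Eq.~\eqref{eq:exp1}, and uses it to deduce the uniqueness statement of Corollary~\ref{coro:unique}. One small remark: your appeal to Lemma~\ref{lem:type} and Fact~\ref{fact:one-dim} to ``motivate'' the ansatz $Q = tA$ is harmless but logically superfluous for the existence direction --- exhibiting any generator whose exponential equals $M$ suffices, so the ansatz needs no justification beyond the fact that it works.
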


\begin{proof}
  All Markov matrices for $d=2$ are covered by the parametrisation
  chosen. The equivalence of (2), (3) and (4) is elementary, where (2)
  is a necessary condition for embeddability by
  Proposition~\ref{prop:necessary}{\ts}(\ref{part:two}), so (1)
  $\Rightarrow$ (2) $\Longleftrightarrow$ (3) $\Longleftrightarrow$
  (4) is clear.
  
  Now, $\det (M) > 0$ means $a+b<1$.  The embedding for the trivial
  case $a = b = 0$ is $\one = \ee^{\nix}$, wherefore we now assume
  $0 < a+b < 1$. The generator
  $A = M - \one = \left( \begin{smallmatrix} -a & a \\ b &
      -b \end{smallmatrix} \right)$ has spectrum
  $\sigma (A) = \{ 0, -(a + b) \}$.  Consequently, $a + b <1$ is its
  spectral radius, and
\[
    Q \, \defeq \, \log (M) \, = \, \log (\one + A) \, = 
    \sum_{m\geqslant 1} \frac{(-1)^{m-1}}{m} \ts A^m
\]   
converges in norm. Since $A^2 = - (a + b) A$, one finds
\begin{equation}\label{eq:log-calc}
   Q \, = \sum_{m\geqslant 1} \frac{(a + b)^{m-1}}{m}
   \ts A \, = \, - \ts \frac{\log (1- a - b)}{a + b} \ts A \ts ,
\end{equation}
which is a Markov generator because the scalar prefactor of $A$ is
always a positive number in our case. Since $M = \ee^Q$ by
construction, any such $M$ is embeddable, and we see that the
necessary determinant condition is also sufficient.
\end{proof}

\begin{remark}
  Our proof of Theorem~\ref{thm:Kendall} was chosen for later
  generalisability.  When $d=2$ and $a+b>0$, a simple alternative
  argument could employ the diagonalisability of $M$ as
 \[
      M \, = \, \myfrac{1}{a+b} \begin{pmatrix}
      1 & -a \\ 1 & b \end{pmatrix} \begin{pmatrix}
      1 & 0 \\ 0 & 1-a-b \end{pmatrix} \begin{pmatrix}
      b & a \\ -1 & 1 \end{pmatrix} ,
 \]  
 which directly gives \eqref{eq:log-calc} by the definition of the
 logarithm of a matrix. This more directly shows that $M$ is
 embeddable if and only if it has positive spectrum.  \exend
\end{remark}

It is instructive, and will be useful later on, to also consider the
complementary point of view to the proof of Theorem~\ref{thm:Kendall},
where one starts from a general rate matrix
$Q = \left( \begin{smallmatrix} -\alpha & \alpha \\ \beta &
    -\beta \end{smallmatrix} \right)$ with $\alpha, \beta \geqslant 0$
and calculates $\ee^{t Q}$.  We may assume $\alpha + \beta >0$ to
avoid the trivial case $\ee^{\nix}=\one$. Now, with
$Q^2 = - (\alpha + \beta) Q$, one explicitly calculates the
exponential series as
\begin{equation}\label{eq:exp1}
  \ee^{t \ts Q} \, = \, \one + \varphi (t) \ts \ts Q
  \quad \text{with} \quad \varphi (t) \, = \,
  \frac{1 - \ee^{-(\alpha+\beta ) t}}{\alpha+\beta} \ts ,
\end{equation}
which also covers the limit
$\alpha+\beta \,\mbox{\footnotesize $\searrow$}\, 0$ via
l'H\^{o}pital's rule.  Since the sum of the two off-diagonal elements
of $\ee^{t \ts Q}$ satisfies
$0 \leqslant 1 - \ee^{- (\alpha+\beta) \ts t} < 1$ for all
$t\geqslant 0$, the spectral radius of $\ee^{t\ts Q} - \one$ is less
than $1$, and one can verify that all Markov matrices $M$ from
Theorem~\ref{thm:Kendall} are covered precisely once. This gives the
following result that is specific to $d=2$; see
\cite{Cuthbert,Cuthbert-2} for some results with $d\geqslant 3$, and
\cite[Cor.~10{\ts}(3) and Thm.~11]{Davies} for a summary.

\begin{coro}\label{coro:unique}
  If a two-dimensional Markov matrix\/ $M$ is embeddable, there is
  precisely one rate matrix\/ $Q$ such that\/ $M=\ee^Q$, namely the
  one from Eq.~\eqref{eq:log-calc}. \qed
\end{coro}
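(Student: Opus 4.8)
The plan is to exploit the complementary computation just carried out, which parametrises all embeddable two-dimensional Markov matrices by rate matrices and, as already noted, does so bijectively. Existence is settled by Theorem~\ref{thm:Kendall}, since the generator of Eq.~\eqref{eq:log-calc} does the job; so only uniqueness remains, and the whole task is to invert the map $Q \mapsto \ee^Q$ explicitly.

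First I would recall that every two-dimensional rate matrix has the form $Q = \left( \begin{smallmatrix} -\alpha & \alpha \\ \beta & -\beta \end{smallmatrix} \right)$ with $\alpha, \beta \geqslant 0$, and that $Q^2 = -(\alpha+\beta)\ts Q$, so Eq.~\eqref{eq:exp1} at $t=1$ applies verbatim to any such $Q$: one has $\ee^Q = \one + \varphi (1) \ts Q$ with $\varphi (1) = \frac{1 - \ee^{-(\alpha+\beta)}}{\alpha+\beta}$. Hence, if $\ee^Q = M$, passing to the generator gives $A = M - \one = \varphi(1) \ts Q$, and comparing the two off-diagonal entries yields $a = \varphi(1)\ts\alpha$ and $b = \varphi(1)\ts\beta$, whence $a + b = \varphi(1)(\alpha+\beta) = 1 - \ee^{-(\alpha+\beta)}$.

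The decisive step is to invert this last relation. Writing $s = \alpha+\beta \geqslant 0$, the map $s \mapsto 1 - \ee^{-s}$ is strictly increasing on $[0,\infty)$, hence injective. Since embeddability of $M$ gives $0 < \det (M) = 1 - a - b \leqslant 1$ by Theorem~\ref{thm:Kendall}, the quantity $a+b$ lies in $[0,1)$, and the equation $1 - \ee^{-s} = a + b$ has the unique solution $s = -\log (1 - a - b) \geqslant 0$. This is precisely where the determinant positivity enters: it guarantees that a finite, and unique, value of $s$ exists at all. In the trivial case $a = b = 0$ one gets $s = 0$ and thus $Q = \nix$; otherwise $\varphi(1) = (a+b)/s > 0$, so $\alpha = a/\varphi(1)$ and $\beta = b/\varphi(1)$ are determined, giving $Q = \frac{s}{a+b}\ts A = -\ts\frac{\log (1-a-b)}{a+b}\ts A$, which is exactly the generator of Eq.~\eqref{eq:log-calc}.

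The argument involves no serious obstacle beyond the strict monotonicity of $s \mapsto 1 - \ee^{-s}$, which is what renders the parametrisation injective. The only points requiring care are the separate handling of the trivial matrix, where $\varphi(1)$ vanishes, and the explicit appeal to $\det (M) > 0$ to ensure that the logarithm, and hence $s$, is well defined and finite.
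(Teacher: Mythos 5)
Your proof is correct and follows essentially the same route as the paper: the paper's justification of Corollary~\ref{coro:unique} is precisely the ``complementary point of view'' computation via Eq.~\eqref{eq:exp1}, ending with the assertion that all matrices from Theorem~\ref{thm:Kendall} are ``covered precisely once''. You simply make that injectivity verification explicit, by reducing it to the strict monotonicity of $s \mapsto 1-\ee^{-s}$ and recovering $\alpha$, $\beta$ from $a$, $b$, which is exactly the intended argument.
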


\begin{remark}
  Note that Theorem~\ref{thm:Kendall} is not restricted to irreducible
  Markov matrices. It is worth noting that the irreducible cases are
  also reversible, and that the embeddable ones among them are also
  reversibly embeddable, as discussed in more generality in
  \cite{Chen}. Reversible Markov matrices form another interesting
  class, though we do not consider them here.
  
  The property of reversibility fails to be semi-stable, as
  can be seen from
  $\left(\begin{smallmatrix} 1-\varepsilon & \varepsilon \\ 1 - 2
      \varepsilon & 2 \varepsilon \end{smallmatrix} \right)$ for small
  $\varepsilon > 0$, which is both embeddable and reversible, while
  the limit $\varepsilon \ts\ts
  \raisebox{1pt}{\text{\tiny $\searrow$}} \ts
  \ts 0$ gives the matrix $\left( \begin{smallmatrix} 1 & 0 \\ 1 & 0
  \end{smallmatrix}\right)$, which is neither. However, if
  $(M_n)^{}_{n\in\NN}$ is a converging sequence of reversible,
  irreducible Markov matrices such that the limit is still irreducible,
  reversibility is preserved as well.
  \exend
\end{remark}

\begin{figure}
\includegraphics[width=0.7\textwidth]{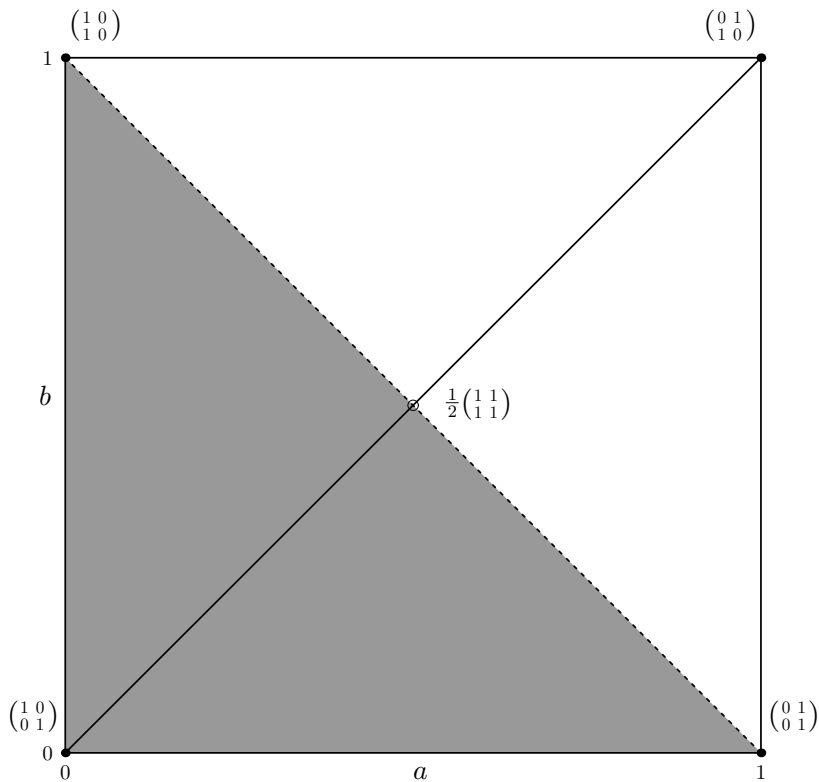}
\caption{The closed convex set $\cM_2$ of Markov matrices for $d=2$,
  described as a square with the parametrisation from
  Theorem~\ref{thm:Kendall}. The four corners are the extremal points,
  which correspond to the stochastic $\{ 0, 1\}\ts$-matrices as
  indicated.  The shaded region represents the subset $E^{}_2$ of
  embeddable matrices, where the dashed line corresponds to the Markov
  matrices with determinant~$0$, which do not belong to
  $E^{}_2 =\cM^{>}_{2}$. The diagonal line is the $1$-simplex of
  symmetric Markov matrices, which agree with the doubly stochastic
  and with the circulant Markov matrices for $d=2$. Note that $E^{}_2$
  is star-shaped with respect to the boundary point
  $\bigl( \frac{1}{2}, \frac{1}{2} \bigr)$, as is the entire set
  $\cM^{}_2$. \label{fig:convex}}
\end{figure}

Although $d=2$ is not really representative for the general case, it
is nevertheless instructive to illustrate the situation from the
viewpoint of convex sets, which we show in Figure~\ref{fig:convex}.
The algebraic situation for $d=2$ is as follows.

\begin{lemma}
  One has\/ $\cE^{}_2 = E^{}_2 =\cM^{>}_{2}$, while\/ $\cE^{+}_{2}$
  consists of all elements of\/ $\cE_2$ with\/ $ab>0$.
\end{lemma}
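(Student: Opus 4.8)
The plan is to reduce everything to the characterisation $E^{}_2 = \cM^{>}_{2}$ already obtained in Theorem~\ref{thm:Kendall}, together with the explicit two-parameter description of $\cM^{}_2$. The only non-trivial equality is $\cE^{}_2 = E^{}_2$, since the containment $E^{}_2 \subseteq \cE^{}_2$ holds by the very definition of the generated semigroup; everything then rests on a closure observation.

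First I would show that $\cM^{>}_{2}$ is closed under matrix multiplication. Indeed, $\cM^{}_2$ is a monoid, so a product of Markov matrices is again Markov, and since the determinant is multiplicative, a product of two matrices with positive determinant again has positive determinant. Hence $M^{}_1, M^{}_2 \in \cM^{>}_{2}$ implies $M^{}_1 M^{}_2 \in \cM^{>}_{2}$. As $E^{}_2 = \cM^{>}_{2}$ is thus itself a subsemigroup of $\cM^{}_2$, the semigroup it generates cannot be larger, so
\[
    \cE^{}_2 \, = \, \langle E^{}_2 \rangle \, = \,
    \langle \cM^{>}_{2} \rangle
    \, = \, \cM^{>}_{2} \, = \, E^{}_2 \ts ,
\]
which settles the first chain of equalities.

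For the statement on $\cE^{+}_{2}$, I would return to the parametrisation $M=\left( \begin{smallmatrix} 1-a & a \\ b & 1-b \end{smallmatrix} \right)$ and use the definition $\cE^{+}_{2} = \{ M \in \cE^{}_2 : M \text{ is positive} \}$. Membership $M\in \cE^{}_2 = \cM^{>}_{2}$ forces $0\leqslant a+b <1$ by the determinant condition, and this together with $a,b\geqslant 0$ already yields $1-a>0$ and $1-b>0$. Thus the two diagonal entries are automatically positive, and the only remaining requirement for positivity is that the off-diagonal entries satisfy $a>0$ and $b>0$, equivalently $ab>0$ since $a,b\geqslant 0$. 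This gives $\cE^{+}_{2} = \{ M\in\cE^{}_2 : ab>0\}$, as claimed.

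Since every step rests only on the multiplicativity of the determinant and a direct inspection of the parametrisation, I expect no serious obstacle here. The one point that deserves care is recognising that the determinant constraint inherited from $\cM^{>}_{2}$ already secures the positivity of the two diagonal entries, so that the single condition $ab>0$ captures the full positivity of $M$.
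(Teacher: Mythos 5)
Your proof is correct, and the main chain of equalities is argued exactly as in the paper: by Theorem~\ref{thm:Kendall} one has $E^{}_2=\cM^{>}_{2}$, and multiplicativity of the determinant (together with the monoid property of $\cM^{}_2$) shows this set is closed under multiplication, hence equal to the semigroup $\cE^{}_2$ it generates. The only divergence is in the claim on $\cE^{+}_{2}$: the paper disposes of it by citing the dichotomy of Proposition~\ref{prop:necessary}{\ts}(\ref{part:four}) (an embeddable matrix is either reducible or positive, so $ab>0$ forces irreducibility and hence positivity), whereas you verify positivity directly from the parametrisation, observing that $\det(M)>0$ gives $a+b<1$ and thus positive diagonal entries, leaving $ab>0$ as the only remaining condition; both routes are valid, and yours is self-contained and elementary, at the cost of being tied to the explicit $d=2$ parametrisation.
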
  

\begin{proof}
  If $M$ and $M'$ are embeddable, we have
  $0 < \det (M), \det (M') \leqslant 1$ by
  Theorem~\ref{thm:Kendall}. Then,
  $\det (M M') = \det (M) \det (M') \in ( 0,1]$ as well, which means
  also $M M'$ is embeddable, and $E_2$ is closed under multiplication.
  The second claim follows from
  Proposition~\ref{prop:necessary}{\ts}\eqref{part:four}.
\end{proof}

To get a better understanding in the matrix setting, we take a look at
commutativity. Writing
$M = \left( \begin{smallmatrix} 1-a & a \\ b & 1-b \end{smallmatrix}
\right)$ and $M' = \left( \begin{smallmatrix} 1-a' & a' \\ b' & 1-b'
\end{smallmatrix} \right)$ as before, and setting $A=M-\one$
and $A'=M'-\one$, a simple calculation gives the
following result.

\begin{fact}\label{fact:commute}
  One has\/
  $\, [M, M'] = \nix \; \Longleftrightarrow \; [A,A']=\nix \;
  \Longleftrightarrow \; a \ts\ts b' = a' b$. \qed
\end{fact}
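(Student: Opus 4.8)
The plan is to split the chain of two equivalences into a trivial algebraic reduction followed by a single direct computation. For the left equivalence, I would note that no calculation is needed at all: since $M=\one+A$ and $M'=\one+A'$ with $\one$ central, the bilinearity of the commutator gives $[M,M']=[\one+A,\one+A']=[A,A']$, because every bracket involving $\one$ vanishes. Thus $[M,M']=\nix$ and $[A,A']=\nix$ are literally the same condition, and the first $\Longleftrightarrow$ is established.

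For the remaining equivalence, I would compute $[A,A']=AA'-A'A$ directly from the explicit forms $A=\left(\begin{smallmatrix} -a & a \\ b & -b \end{smallmatrix}\right)$ and $A'=\left(\begin{smallmatrix} -a' & a' \\ b' & -b' \end{smallmatrix}\right)$. Carrying out the two $2\!\times\!2$ products and subtracting, the anticipated outcome is
\[
   [A, A'] \, = \, (a\ts b' - a'\ts b)
   \begin{pmatrix} \phantom{-}1 & -1 \\ \phantom{-}1 & -1 \end{pmatrix} ,
\]
where the common scalar factor $a\ts b'-a'\ts b$ emerges in all four entries up to sign (the diagonal entries as $\pm(a\ts b'-a'\ts b)$ and the off-diagonal entries as their negatives). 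Since the fixed matrix on the right is nonzero, the commutator vanishes if and only if the scalar $a\ts b'-a'\ts b$ does, which is exactly the stated criterion $a\ts b'=a'\ts b$.

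There is no genuine obstacle in this argument; it is the \emph{simple calculation} promised in the statement. The only point requiring a little care is the sign bookkeeping when forming the two products $AA'$ and $A'A$, which is what forces the matrix $\left(\begin{smallmatrix} 1 & -1 \\ 1 & -1 \end{smallmatrix}\right)$ to have row sums zero and hence makes $[A,A']$ itself an element of $\cA_0$, consistent with $A,A'\in\cA_0$.
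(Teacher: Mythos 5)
Your proposal is correct and matches the paper's intent: the paper offers no written proof beyond noting that ``a simple calculation gives the following result,'' and your calculation, with the trivial reduction $[M,M']=[\one+A,\one+A']=[A,A']$ followed by the explicit evaluation $[A,A']=(a\ts b'-a'b)\left(\begin{smallmatrix} 1 & -1 \\ 1 & -1\end{smallmatrix}\right)$, is precisely that calculation. Both the reduction and the final commutator formula check out.
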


When $M$ and $M'$ are embeddable, so $M=\ee^Q$ and $M'=\ee^{Q'}$, one
has $M M' = \ee^{Q''}$, where $Q'' = Q + Q'$ if $[ Q, Q' ] =\nix $.
Otherwise, the new rate matrix $Q''$ can be calculated by the
Baker--Campbell--Hausdorff (BCH) formula,\footnote{We refer to the
  \textsc{WikipediA} entry on the BCH formula for a quick summary.}
at least in principle. We shall give a simple, closed formula for this
case below in Eq.~\eqref{eq:neq-Q}. One consequence is that $Q''$
belongs to the Lie algebra generated by $Q$ and $Q'$, which motivated
the investigation of Lie--Markov models \cite{Jeremy,SFSJ}. While the
row sums of $Q''$ all vanish, it remains a difficult problem (when
$d \geqslant 3$) to decide when $Q''$ is a Metzler matrix, and hence a
Markov generator.

In general, when $M$ and $M'$ are embeddable but fail to commute, the
embedding semigroups have no simple relation to one another. The
situation, neither restricted to $d=2$ nor to Markov generators, reads
as follows; see \cite{Engel} for background and various extensions.

\begin{lemma}\label{lem:prod}
  For\/ $A, B, C \in \Mat (d,\CC)$, with\/ $d\geqslant 2$,
  the following statements are equivalent.
\begin{enumerate}\itemsep=2pt
\item
  One has\/ $C = A + B$ with\/ $[A,B]=\nix$.
\item  
  One has\/ $\ee^{t A} \ee^{t B} = \ee^{t\ts C}$ for all\/
  $t\in \RR$.
\item  
  One has\/ $\ee^{t A} \ee^{t B} = \ee^{t\ts C}$ for all\/
  $t\geqslant 0$.
\item One has\/ $\ee^{t A} \ee^{t B} = \ee^{t\ts C}$ for some\/
  $\varepsilon > 0$ and all\/ $0 \leqslant t < \varepsilon$.
\item One has\/ $\ee^{t A} \ee^{t B} = \ee^{t\ts C}$ for some\/
  $t_0 \in \RR$, $\varepsilon > 0$ and all\/
  $ \lvert t - t_0 \rvert < \varepsilon$.
\end{enumerate}  
\end{lemma}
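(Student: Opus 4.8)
The plan is to prove the chain of implications in the cyclic order $(1)\Rightarrow(2)\Rightarrow(3)\Rightarrow(4)\Rightarrow(5)\Rightarrow(1)$, since most individual steps are either standard exponential identities or are trivial specialisations. The genuinely non-trivial part is $(5)\Rightarrow(1)$, where only a local equality of the two matrix functions near an arbitrary base point $t_0$ is assumed and we must recover the \emph{global} algebraic commutation $[A,B]=\nix$ together with $C=A+B$.

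First I would dispose of the easy arcs. For $(1)\Rightarrow(2)$, commutativity gives $\ee^{tA}\ee^{tB}=\ee^{t(A+B)}=\ee^{tC}$ via the standard fact that $\ee^{X}\ee^{Y}=\ee^{X+Y}$ whenever $[X,Y]=\nix$, applied with $X=tA$ and $Y=tB$. The implications $(2)\Rightarrow(3)\Rightarrow(4)$ are immediate restrictions of the range of $t$, and $(4)\Rightarrow(5)$ is the special case $t_0=0$ with the interval shrunk if necessary. So the whole content lies in closing the loop with $(5)\Rightarrow(1)$.

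For $(5)\Rightarrow(1)$, the plan is to pass from an equality of matrix functions on an interval to an equality of their Taylor coefficients, which is legitimate because $t\mapsto\ee^{tA}\ee^{tB}$ and $t\mapsto\ee^{tC}$ are real-analytic (indeed entire) functions of $t$. Writing $f(t)\defeq\ee^{tA}\ee^{tB}$ and $g(t)\defeq\ee^{tC}$, the hypothesis says $f=g$ on an open interval, so all derivatives agree at $t_{0}$; but since both functions are entire, agreement on any interval forces $f\equiv g$ on all of $\RR$, and in particular $f$ and $g$ agree to all orders \emph{at $t=0$}. Now I would differentiate and evaluate at $t=0$. The first derivative of $f$ at $0$ is $A+B$ and of $g$ is $C$, yielding $C=A+B$. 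The second derivative at $0$ gives $A^{2}+2AB+B^{2}$ for $f$ and $C^{2}=(A+B)^{2}=A^{2}+AB+BA+B^{2}$ for $g$; equating these and cancelling yields $2AB=AB+BA$, that is $[A,B]=\nix$. This simultaneously delivers both halves of statement~(1).

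The main obstacle is purely the justification that local equality upgrades to equality of Taylor data at the origin: one must be careful that $t_{0}$ need not be $0$, so I cannot simply read off coefficients at the base point. The clean way around this is the analyticity-plus-identity-theorem argument sketched above (two entire matrix-valued functions agreeing on a set with an accumulation point coincide everywhere), after which the computation of the first two derivatives at $0$ is routine. An alternative, if one wishes to avoid invoking analytic continuation, is to differentiate directly at $t_{0}$: from $f(t_0)=g(t_0)$ and $f'(t_0)=g'(t_0)$ one extracts $\ee^{t_0 A}A\ee^{t_0 B}+\ee^{t_0 A}\ee^{t_0 B}B = C\,\ee^{t_0 C}=C f(t_0)$, and a short manipulation using invertibility of $\ee^{t_0 A}$ and $\ee^{t_0 B}$ recovers the same two relations; but the analytic route is shorter and conceptually cleaner, so that is the one I would present.
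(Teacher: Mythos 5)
Your proposal is correct, and for the crucial implication $(5)\Rightarrow(1)$ it takes a genuinely different route from the paper. The paper works directly at the base point: after rescaling so that $t_0=1$, it differentiates $\ee^{tA}\ee^{tB}=\ee^{tC}$ at $t=1$ to obtain $A\ts\ee^{C}+\ee^{C}B=C\ee^{C}$, hence the double identity $C=\ee^{-C}\!A\ts\ee^{C}+B=A+\ee^{C}B\ts\ee^{-C}$; a second differentiation, combined with left-multiplying that identity by $C$, yields $[A,C]=\nix$, from which $C=A+B$ and then $[A,B]=\nix$ follow. This is an entirely algebraic argument that never leaves the base point. You instead invoke real-analyticity: the entries of $t\mapsto\ee^{tA}\ee^{tB}$ and $t\mapsto\ee^{tC}$ are restrictions of entire functions, so agreement on an interval forces agreement on all of $\RR$ by the identity theorem, after which you read off $C=A+B$ and $[A,B]=\nix$ from the first two derivatives at $t=0$, exactly as in the case $(4)\Rightarrow(1)$ (and your derivative computations there are correct). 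Your route is shorter and conceptually cleaner, at the price of importing analytic continuation; the paper's route is more elementary and self-contained, using only differentiation and matrix manipulation. Both are valid, and in effect your argument reduces $(5)$ to $(4)$, whereas the paper handles $(5)$ head-on.

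One small slip you should repair: $(4)\Rightarrow(5)$ is \emph{not} ``the special case $t_0=0$'', because statement $(5)$ with $t_0=0$ requires the equality for negative $t$ as well, which $(4)$ does not supply. Instead take $t_0=\varepsilon/2$ with radius $\varepsilon/2$; then the interval $\lvert t-t_0\rvert<\varepsilon/2$ lies inside $[0,\varepsilon)$ and $(5)$ follows. The fix is trivial, but as written that step is incorrect.
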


\begin{proof}
  Condition (2) follows from (1), as can be checked by a simple
  calculation with the exponential series, and (2) obviously implies
  (3), (4) and (5), where (3) $\Rightarrow$ (4) is also clear.

  Now, assume $\ee^{t A} \ee^{t B} = \ee^{t\ts C}$ for small
  $t\geqslant 0$. Evaluating the time derivative of both sides at
  $t=0$ gives $A+B=C$. With this, now evaluating the second time
  derivative of both sides at $t=0$, one sees that $A$ and $B$ must
  commute, so (4) $\Rightarrow$ (1) $\Rightarrow$ (2).
  
  Finally, assume (5). As $t_0 = 0$ is covered by (4), we may take
  $t_0=1$ without loss of generality, by rescaling all three matrices
  if necessary. This time, evaluating the derivative at $t=1$ gives
  $A \ts \ee^C + \ee^C \nts B = \ee^C C = C \ee^C$ and thus the double
  identity
\begin{equation}\label{eq:trick}
    C \, = \,\ee^{-C} \! A \ee^C + B 
        \, = \, A + \ee^C \nts B \ts \ee^{-C} ,
\end{equation}
  because $\ee^C$ is invertible. The analogous exercise with 
  the second derivative leads to
\[
    C^2 \, = \, A\ts C + C \ee^C \nts B \ee^{-C} 
       \, = \, C\nts A + C \ee^C \nts B \ee^{-C} ,
\]  
where the second equality emerges from Eq.~\eqref{eq:trick} via
multiplying it by $C$ from the left. This implies $[A,C]=\nix$, and
Eq.~\eqref{eq:trick} then gives $C=A+B$.  This, in turn, implies
$[A,B]=\nix$, hence (5) $\Rightarrow$ (1), and we are done.
\end{proof}

\begin{remark}
  Lemma~\ref{lem:prod} is related to rate matrix estimates in
  phylogenetic reconstructions. There, a Markov matrix
  $M = \ee^{t \ts C}$ for time $t$ needs to be consistent with added
  data or measurements at an intermediate time $s$, meaning that
  $\ee^{t \ts C} = \ee^{s A} \ee^{(t-s) B}$ should hold for suitable
  generators $A$, $B$ and $C$. When this is to remain true for some
  small intervals around $t$ and $s$, which may be viewed as some kind
  of stability requirement, arguments as in the previous proof force
  the three generators to be equal. There are variants of this
  situation, the details of which are left to the interested reader.
  Particular aspects, such as the inconsistency of some
  time-reversible models and consequences thereof, are discussed in
  \cite{todo}.  \exend
\end{remark}

Let us next look at an interesting example where a power of a
non-embeddable matrix is embeddable, and how this emerges.

\begin{example}\label{ex:paradox}
  Consider $M = \left( \begin{smallmatrix} \frac{1}{2} & \frac{1}{2} \\
      1 & 0 \end{smallmatrix} \right)$
  from Example~\ref{ex:trivex}, which is primitive because
\[
    M^2 \, = \, \begin{pmatrix}
    \frac{3}{4} & \frac{1}{4} \\
    \frac{1}{2} & \frac{1}{2}
    \end{pmatrix}
\]
is positive, while $M$ itself is not. Consequently, $M$ is not
embeddable, by
\mbox{Proposition~\ref{prop:necessary}{\ts}\eqref{part:four}}, while
$M^2$ certainly is, by an application of Theorem~\ref{thm:Kendall}.

Using the formula from Eq.~\eqref{eq:log-calc}, one finds
$\ee^{2 Q} = M^2$ with
\[
      Q \, = \, \frac{4 \ts \log (2)}{3} \begin{pmatrix}
      -\frac{1}{4} & \frac{1}{4} \\ \frac{1}{2} & - \frac{1}{2}
      \end{pmatrix}   \quad \text{and} \quad
       M' \, = \, \ee^Q \, = \, \begin{pmatrix}
      \frac{5}{6} & \frac{1}{6} \\ \frac{1}{3} &
       \frac{2}{3}\end{pmatrix} ,
\]
where $M'$ is another matrix root of $M^2$, but a  positive
and embeddable one.

The analogous phenomenon happens whenever a Markov matrix fails to
be embeddable, though a power of it is. Recall that the
embeddable Markov matrices are the non-singular Markov matrices
that possess a Markov $n$-th root for every $n\in\NN$; see \cite{King,Joh2}.  \exend
\end{example}

Figure~\ref{fig:convex} also illustrates $\cM_2$ and its subset of
symmetric Markov matrices, some aspects of which will later be
extended in Remark~\ref{rem:I-mat}. With hindsight, the set $\cM_2$
can be related to different matrix algebras, including
$\mathrm{Mat} (2,\RR)$.  While this viewpoint becomes highly complex
for $d>2$, the situation is simpler for another matrix algebra, which
we shall discuss next.

\section{Equal-input matrices}\label{sec:equal-input}

Following \cite[Sec.~7.3.1]{Steel}, let us consider a special, but
practically important, class of Markov matrices, known as
\emph{equal-input} (or Felsenstein \cite{Fels}) matrices.  For its
formulation, let $C$ be a $d\nts\nts \times\! d$-matrix with equal
rows, each being $(c^{}_{1}, \ldots , c^{}_{d})$, and define
$c=c^{}_{1} + \cdots + c^{}_{d}$ as its \emph{summatory parameter},
which will also be called its \emph{parameter sum}. A matrix $C$ with
$c=1$ and all $c_i\geqslant 0$ is Markov, but of rank $1$, and thus
never embeddable for $d>1$.

\begin{fact}
  Any square matrix\/ $C$ with equal rows and summatory parameter\/
  $c$ satisfies the relation\/ $C^2 = c \ts\ts C$.  When\/ $c\ne 0$,
  it is always diagonalisable, while it is nilpotent for\/ $c=0$. In
  the latter case, it is diagonalisable if and only if\/ $C=\nix$.
  \qed
\end{fact}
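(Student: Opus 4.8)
The plan is to exploit the rank-at-most-one structure of $C$. First I would write $\bs{e} = (1,\ldots,1)^{T}$ for the all-ones column vector and $r = (c^{}_{1},\ldots,c^{}_{d})$ for the common row, which gives the outer-product factorisation $C = \bs{e}\,r$. The quadratic relation then drops straight out of associativity: since $r\,\bs{e} = \sum_{i} c^{}_{i} = c$ is a scalar, one has $C^2 = \bs{e}\,(r\,\bs{e})\,r = c\,\bs{e}\,r = c\,C$. This is the only genuine computation in the whole statement, and it is immediate once the factorisation is in place.

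Next I would read off the diagonalisability claims from the annihilating polynomial $p(z) = z(z-c)$, which kills $C$ by the relation just established, so that the minimal polynomial of $C$ divides $p$. When $c \ne 0$, the roots $0$ and $c$ of $p$ are distinct, hence the minimal polynomial of $C$ has only simple roots, and $C$ is diagonalisable by the standard criterion. When $c = 0$, the relation degenerates to $C^2 = \nix$, so $C$ is nilpotent; this settles the dichotomy between the two regimes.

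The only step needing a moment's thought is the final equivalence in the nilpotent case. Here I would argue that a nilpotent matrix has all eigenvalues equal to $0$, so that if it were diagonalisable it would be similar to, and therefore equal to, $\nix$; the converse is trivial. This is precisely the statement that $\nix$ is the unique diagonalisable nilpotent matrix, already invoked in the proof of Fact~\ref{fact:diag}, so no new ingredient is required. I do not expect any real obstacle: the entire argument rests on the single factorisation $C = \bs{e}\,r$, from which the quadratic relation and all of its spectral consequences follow mechanically.
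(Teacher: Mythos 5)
Your proof is correct; the paper states this fact with the proof omitted (the \verb|\qed| follows the statement directly), and your argument---the rank-one factorisation $C = \bs{e}\,r$ yielding $C^2 = c\ts C$, the squarefree annihilating polynomial $z(z-c)$ when $c\ne 0$, and the uniqueness of $\nix$ as a diagonalisable nilpotent matrix when $c=0$---is exactly the standard reasoning the paper leaves implicit. There are no gaps, and your observation that the final step reuses the same ingredient as the paper's proof of the diagonalisability fact for $\ee^B$ is apt.
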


Now, since $C$ itself is not interesting enough, consider
\begin{equation}\label{eq:equal-input}
     M^{}_{C} \, \defeq \,  (1-c) \ts \one + C \ts ,
\end{equation}
which is a matrix with row sums $1$. It is Markov if $c_i\geqslant 0$
and $c \leqslant 1 + c_i$ for all $i$. We call such Markov matrices
\emph{equal-input}, since they describe a Markov chain where the
probability of a transition $i \to j$, for $i\ne j$, depends only on
$j$. As such, an equal-input Markov matrix $M$ emerges from a matrix
$C$ with equal rows as given in Eq.~\eqref{eq:equal-input}.  We denote
the set of all equal-input Markov matrices with fixed dimension $d$ by
$\cC_d$, so $\cC_d \subseteq \cM_d$.

Since $C$ has spectrum $\sigma (C) = \{ 0, \ldots, 0, c\}$, with
$d \! - \! 1$ copies of $0$, one gets
\begin{equation}\label{eq:def-mc}
    \det (M^{}_{C} ) \, = \, (1-c)^{d-1} \ts ,
\end{equation}
which, for embeddability, has to lie in $(0,1]$ by
Proposition~\ref{prop:necessary}{\ts}\eqref{part:two}.  When
$d \geqslant 2$ is even, Eq.~\eqref{eq:def-mc} then clearly implies
$0\leqslant c < 1$, where $c=0$ with all $c_i \geqslant 0$ means
$c^{}_1 = c^{}_2 = \ldots = c^{}_d =0$ and thus $M^{}_{C} = \one$.
Note that $d=2$ coincides with the case treated in
Theorem~\ref{thm:Kendall}, so equal-input Markov matrices can be
viewed as one generalisation of Section~\ref{sec:two} to higher
dimensions.

For general $d$, whenever $0< c<1$, the spectral radius of
$A = M^{}_{C} -\one$ is less than $1$ and we can use the relation
$A^2 = -c \ts A $ to obtain
\begin{equation}\label{eq:ei-gen}
     \log (M^{}_{C} ) \, = \, - \ts \frac{\log (1-c)}{c} \ts A
\end{equation}
in complete analogy to Eq.~\eqref{eq:log-calc}, where $A = C - c\ts \one$
is a generator of equal-input type, called an \emph{equal-input
  generator} from now on, as it is derived from a matrix
$C$ with equal rows in the obvious way. 
Note that $c=0$ means $A=\nix$, so that the
formula in Eq.~\eqref{eq:ei-gen} also covers this limiting case via
l'H\^{o}pital's rule.

Observe that $M\in\cM_d$ being of equal-input type is a semi-stable,
but not a stable property in the sense of Definition~\ref{def:stable},
so we should expect some subtleties. Thus, consider a general matrix
$B$ with equal rows $(\alpha^{}_1 , \ldots , \alpha^{}_d )$.  Then,
$Q^{}_{\nts B} \defeq B - \alpha \ts \one$ with
$\alpha = \alpha^{}_1 + \cdots + \alpha^{}_d$ is an equal-input
generator whenever all $\alpha_i \geqslant 0$.  With
$Q^{\ts 2}_{\nts B} = -\alpha \, Q^{}_{\nts B}$, one gets
\[
    \ee^{t \ts Q^{}_{\nts B}} \, = \, \one + \varphi (t) \ts Q^{}_{\nts B}
    \quad \text{with} \quad \varphi (t) \, = \,
    \frac{1-\ee^{-\alpha t}}{\alpha} \ts ,
\]
this time in complete analogy to the formula in
Eq.~\eqref{eq:exp1}. In particular, all equal-input Markov matrices
$M^{}_C = (1-c) \ts \one + C \in \cC_d$ with $0\leqslant c <1$ are
embeddable in this way.

Also, observing that 
\begin{equation}\label{eq:ci-rule}
    M^{}_{C} \ts M^{}_{C'} \, = \, M^{}_{C''}
    \quad \text{with} \quad
     C'' \, = \, (1-c{\ts}') \ts C + C'
     \quad \text{and} \quad
     c{\ts}'' \, = \, c + c{\ts}' - c \ts c{\ts}'
\end{equation}
in obvious notation, we know that the product of equal-input Markov
matrices is again equal-input \cite[Lemma~7.2{\ts}(iv)]{Steel}. Also,
$0\leqslant c, c{\ts}' < 1$ implies $c{\ts}'' \in [0,1)$, in line with
the fact that the equal-input Markov matrices form the (closed)
positive cone of a matrix algebra.

Putting these pieces together, we have the following result for even
dimensions.

\begin{prop}\label{prop:equal-input}
  For\/ $d$ even, an equal-input Markov matrix\/ $M^{}_{C}$ is
  embeddable if and only if its parameter sum satisfies\/
  $0 \leqslant c < 1$. The set of such matrices, $\cC_d \cap E_d$,
  forms a monoid. The irreducible elements in\/ $\cC_d \cap E_d$ are
  the  positive ones, and they form a semigroup and a
  two-sided ideal within\/ $\cC_d \cap E_d$.  \qed
\end{prop}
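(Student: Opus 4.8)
The plan is to assemble the several pieces already established in this section into a clean equivalence-plus-algebra argument. The core characterisation---that an equal-input Markov matrix $M^{}_C$ with $d$ even is embeddable if and only if $0 \leqslant c < 1$---splits into two directions. For sufficiency, I would invoke the explicit construction already carried out: when $0 \leqslant c < 1$, the generator $A = C - c\ts\one$ has spectral radius $<1$, and Eq.~\eqref{eq:ei-gen} (equivalently, the formula $\ee^{t\ts Q^{}_{\nts B}} = \one + \varphi(t)\ts Q^{}_{\nts B}$) exhibits a bona fide equal-input generator whose exponential is $M^{}_C$. For necessity, the key input is the determinant formula $\det(M^{}_C) = (1-c)^{d-1}$ together with Proposition~\ref{prop:necessary}{\ts}(\ref{part:two}), which forces $\det(M^{}_C) \in (0,1]$. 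Here the parity of $d$ is essential: since $d-1$ is odd when $d$ is even, $(1-c)^{d-1} > 0$ already rules out $c > 1$, and $(1-c)^{d-1} \leqslant 1$ pins the admissible range to $0 \leqslant c < 1$, with the boundary $c=0$ forcing $M^{}_C = \one$ as noted after Eq.~\eqref{eq:def-mc}.

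Having fixed the parameter range, I would establish the monoid structure of $\cC_d \cap E_d$ using the composition rule in Eq.~\eqref{eq:ci-rule}. That rule shows the product of two equal-input matrices is again equal-input, with composed parameter $c{\ts}'' = c + c{\ts}' - c\ts c{\ts}' = 1 - (1-c)(1-c{\ts}')$; since $0 \leqslant c, c{\ts}' < 1$ gives $(1-c)(1-c{\ts}') \in (0,1]$, one reads off $c{\ts}'' \in [0,1)$, so the product stays in the embeddable range. Closure then follows directly from the sufficiency half of the characterisation, and since $\one \in \cC_d \cap E_d$ (the case $c=0$), the set is a monoid.

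For the final claim about irreducibility and the ideal structure, I would first identify, within $\cC_d \cap E_d$, the irreducible elements with the positive ones. By Proposition~\ref{prop:necessary}{\ts}(\ref{part:four}), an embeddable Markov matrix is either reducible or positive; conversely a positive matrix is irreducible, so for embeddable matrices the two notions coincide and the irreducible members of $\cC_d \cap E_d$ are exactly the positive ones. The semigroup and two-sided ideal properties then mirror the proof of Fact~\ref{fact:ideal}: a product of positive matrices is positive, and the product of a positive Markov matrix with any Markov matrix (in either order) is again positive. Because both $\cC_d \cap E_d$ and the positive subset are stable under these products, the positive elements form a semigroup that absorbs multiplication from either side, hence a two-sided ideal.

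The main obstacle, such as it is, lies in the necessity direction and its dependence on parity: one must be careful that it is precisely the oddness of the exponent $d-1$ that converts the determinant constraint into the tight bound $c < 1$ while simultaneously excluding $c > 1$. For odd $d$ the sign argument would behave differently (this is the even/odd dichotomy flagged in the section introduction), so the proof genuinely uses $d$ even at exactly this step; everything else is assembling the composition rule and the positivity-ideal bookkeeping, both of which are routine given the results already in hand.
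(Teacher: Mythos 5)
Your proposal is correct and takes essentially the same route as the paper, which "puts the pieces together" exactly as you do: the determinant formula \eqref{eq:def-mc} with Proposition~\ref{prop:necessary}{\ts}(\ref{part:two}) and the parity of $d-1$ for necessity, the explicit equal-input generator of Eq.~\eqref{eq:ei-gen} for sufficiency, the composition rule \eqref{eq:ci-rule} for the monoid structure, and the dichotomy of Proposition~\ref{prop:necessary}{\ts}(\ref{part:four}) together with the argument of Fact~\ref{fact:ideal} for the semigroup/ideal claim. One harmless bookkeeping slip: positivity of $(1-c)^{d-1}$ excludes all of $c \geqslant 1$ (not merely $c>1$), while the bound $(1-c)^{d-1} \leqslant 1$ corresponds to $c \geqslant 0$, which is automatic from the Markov property of $M^{}_{C}$; the conclusion $0 \leqslant c < 1$ is unaffected.
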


For odd $d$ and $c>0$, the eigenvalues of $M^{}_{C}$ are $1$ and
$1-c$, where the algebraic multiplicity of $1-c$ is $d-1$ and thus
even. Hence, the case $c>1$ can neither be excluded from embeddability
by the determinant criterion nor by
Proposition~\ref{prop:necessary}{\ts}(4); compare
\cite[Prop.~2]{Davies}.  To illustrate this point, let us review and
expand \cite[Ex.~16]{Davies} in our setting.

\begin{example}\label{ex:no-go}
   Consider the commuting Markov generators
\begin{equation}\label{eq:J-def}
     Q \, = \, \begin{pmatrix} -1 & 1 & 0 \\ 0 & -1 & 1 \\
         1 & 0 & -1 \end{pmatrix}
     \quad \text{and} \quad
     J^{}_3 \, = \, \myfrac{1}{3} \begin{pmatrix}
     -2 & 1 & 1 \\ 1 & -2 & 1 \\ 1 & 1 & -2 \end{pmatrix} , 
\end{equation}
with spectra
$\sigma (Q) = \big\{0, - \frac{1}{2} \bigl( 3 \pm \ii
\sqrt{3}\,\bigr)\big\}$
and $\sigma (J^{}_3) = \{ 0, -1 , -1 \}$.  With $J^2_3 = -J^{}_3$, one
gets
\begin{equation}\label{eq:J-exp}
     \ee^{t J_3} \, = \, \one + (1-\ee^{-t}) \ts J^{}_3 \ts .
\end{equation}
The matrix ring $\RR[J_3] = \{ a \ts \one + b J_3 : a,b \in \RR \}$ is
two-dimensional (viewed as a vector space over $\RR$). Now, $J_3$
fails to have simple spectrum, but is diagonalisable, so $\Co (J_3)$
is larger than $\RR [J_3]$ by Fact~\ref{fact:comm}.  If $Q'$ is a rate
matrix, a simple computation shows that $[Q',J_3]=\nix$ forces $Q'$ to
be \emph{doubly stochastic}, which means that also all column sums of
$Q'$ are zero.

Via an explicit calculation, one finds the embeddable matrix
\[
   M \, = \,  \exp \Bigl({\myfrac{2 \pi}
   {\mbox{\small $\sqrt{3}$} } \, Q}\Bigr) \, = \, \begin{pmatrix}
   \frac{1}{3} - 2 \delta & \frac{1}{3} + \delta & \frac{1}{3} + \delta \\
   \frac{1}{3} + \delta & \frac{1}{3} - 2 \delta & \frac{1}{3} + \delta \\ 
   \frac{1}{3} + \delta & \frac{1}{3} + \delta & \frac{1}{3} - 2 \delta 
    \end{pmatrix} \, = \, \one + (1 + 3 \ts \delta) \ts J^{}_3
\]      
with $\delta = \frac{1}{3} \ee^{-\pi \sqrt{3}} \approx 0.00144 > 0$
and $\det (M) = 9 \ts \delta^2 = \ee^{- 2 \pi \sqrt{3}} > 0$.  The
particular property here is that $M$ is a symmetric, equal-input
Markov matrix which has a negative eigenvalue with algebraic
multiplicity $2$,  $\lambda = -\ee^{- \pi \sqrt{3}}$, but summatory
parameter $c = c^{}_{\nts M} = 1 + 3 \delta > 1$. This shows that
Proposition~\ref{prop:equal-input} does not extend to $n$ odd.
Note also that $M=\ee^Q$ with a symmetric $Q$ is impossible
because $\sigma(Q)\subset \RR$ would give a contradiction with
$\lambda < 0$.

Algebraically, we have $Q\in \Co (J_3) \setminus \RR [J_3]$, where $Q$
is doubly stochastic and circulant, see Fact~\ref{fact:J-comm} below
for more, but neither symmetric nor of equal-input type.  Since
$[Q,J_3]=\nix$, a one-parameter family of such examples can be
obtained by using the generator
$\frac{2 \pi}{\sqrt{3}} \ts Q + \varepsilon J_3$ with sufficiently
small $\varepsilon$.

Now, consider a small neighbourhood $U$ of $M$ in $\cM_3$. Since
$M\in E_3$, where $E_3 \subset \cM_3$ is a set of full topological
dimension $6$, the local homeomorphism property of the exponential map
implies that $U$ must contain other embeddable equal-input matrices
with $c>1$, where not all $c_i$ are equal. None of them can be
equal-input embeddable. In this sense, the above example is neither
isolated nor restricted to a lower-dimensional family of matrices.
\exend
\end{example}

The observations on $J_3$ from Example~\ref{ex:no-go} can be
summarised and extended as follows. A generalisation to arbitrary
$d\geqslant 2$ will be stated in Lemma~\ref{lem:ei-comm}.

\begin{fact}\label{fact:J-comm}
  A matrix\/ $B \in \Mat (3,\RR)$ commutes with the Markov generator\/
  $J_3$ from Eq.~\eqref{eq:J-def} if and only if there is a real
  number\/ $\tau$ such that all row and all column sums of\/ $B$
  equal\/ $\tau$. The centraliser of\/ $J_3$ is five-dimensional, and
  any\/ $B\in \Co (J_3)$ is of the form
\[
    B \, = \, \tau \one \, +  \begin{pmatrix}
    -x-y & x+w & y-w \\ x-w & -x-z & z+w \\
    y+w & z-w & -y-z \end{pmatrix}
\]
with\/ $\tau, x,y,z, w \in \RR$.  In particular, a Markov matrix or a
Markov generator commutes with\/ $J_3$ if and only if it is doubly
stochastic.
\end{fact}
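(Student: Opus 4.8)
The plan is to reduce the commutation with $J_3$ to commutation with the all-ones matrix, which exposes the row- and column-sum structure directly. Writing $P$ for the $3\times 3$ matrix all of whose entries equal $1$, equivalently $P = e\ts e^{T}$ with $e = (1,1,1)^{T}$, one has the identity $J_3 = \frac{1}{3}\ts P - \one$. Hence $[B, J_3] = \frac{1}{3}\ts [B, P]$, so that $B$ commutes with $J_3$ if and only if it commutes with $P$, the scalar summand $\one$ dropping out entirely.

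Next I would compute both products explicitly. Since $(BP)^{}_{ij} = \sum_k B^{}_{ik} = r^{}_i$ equals the $i$-th row sum of $B$, while $(PB)^{}_{ij} = \sum_k B^{}_{kj} = s^{}_j$ equals the $j$-th column sum, the condition $BP = PB$ reads $r^{}_i = s^{}_j$ for all $i,j$. Fixing one index at a time forces all row sums and all column sums to share a single common value $\tau$, which is exactly the asserted characterisation; the converse is immediate, since any such $B$ satisfies $r^{}_i = s^{}_j = \tau$ and thus commutes with $P$.

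For the dimension, the condition just derived amounts to the four linear equalities $r^{}_1 = r^{}_2 = r^{}_3$ and $s^{}_1 = s^{}_2 = s^{}_3$, the coincidence of the common row and column value being automatic from $r^{}_1 + r^{}_2 + r^{}_3 = s^{}_1 + s^{}_2 + s^{}_3$. These four conditions are independent, whence $\dim \Co (J_3) = 9 - 4 = 5$. To reach the explicit form, I would split $B = \tau\one + B_0$ with $B_0 = B - \tau\one$; then $B_0$ has all row and all column sums equal to $0$, and such matrices form the $4$-dimensional complement of $\RR\one$ in $\Co (J_3)$. It then remains to verify that the displayed matrix has vanishing row and column sums for every $x,y,z,w$ (a one-line check) and that its four coefficient matrices are linearly independent; being four independent elements of a $4$-dimensional space, they form a basis, which establishes the parametrisation.

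The final assertion follows at once: a Markov matrix has all row sums equal to $1$, so commuting with $J_3$ forces $\tau = 1$ and hence all column sums equal to $1$, that is, it is doubly stochastic; the same argument with $\tau = 0$ handles Markov generators, and both converses are trivial. The only step requiring genuine insight is the opening reduction $J_3 = \frac{1}{3}\ts P - \one$; everything afterwards is routine linear algebra, so I do not anticipate a real obstacle.
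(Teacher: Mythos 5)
Your proposal is correct, and it takes a cleaner, more structural route than the paper does. The paper's own proof is a terse one: it simply writes out the nine linear equations contained in $[B,J_3]=\nix$ and solves them, which simultaneously yields the row/column-sum characterisation and the explicit parametrisation, but only as the outcome of a $d=3$-specific computation. Your opening reduction $J_3 = \frac{1}{3}\ts e\ts e^T - \one$, together with the observation that $(Be\ts e^T)^{}_{ij}$ is the $i$-th row sum and $(e \ts e^T \nts B)^{}_{ij}$ the $j$-th column sum of $B$, replaces that computation by a rank-one argument that is dimension-independent: verbatim the same reasoning shows that $\Co\bigl(\frac{1}{d}\ts e\ts e^T - \one\bigr)$ in $\Mat(d,\RR)$ consists exactly of the matrices with all row and column sums equal, of dimension $(d-1)^2+1$. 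This anticipates the spirit of the paper's own generalisation in Lemma~\ref{lem:ei-comm}, whose proof likewise exploits the constant-row structure of the matrix $C$ rather than entry-wise computation. What the paper's brute-force calculation buys in exchange is that the explicit five-parameter form drops out automatically, whereas you must produce it afterwards by exhibiting a basis of the zero-sum matrices and checking linear independence; both of the routine verifications you defer (independence of the four sum functionals, and of the four coefficient matrices of $x,y,z,w$) do go through, so there is no gap.
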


\begin{proof}
  The first claim follows from a simple calculation around
  $[B,J_3]=\nix$, which also gives the parametrisation chosen.  The
  last claim is obvious via restricting $\Co (J_3)$ to the set of
  Markov matrices or generators, respectively.
\end{proof}

Clearly, the matrix $M$ from Example~\ref{ex:no-go} cannot be written
as $\ee^Q$ with $Q$ an equal-input generator, as any $\ee^Q$ then has
parameter sum $c < 1$, while we had $c^{}_{\nts M} >1$. Also, due to
Eqs.~\eqref{eq:ei-gen} and \eqref{eq:ci-rule}, equal-input embeddable
Markov matrices are closed under multiplication, and we have the
following counterpart of Proposition~\ref{prop:equal-input}.

\begin{lemma}
  When\/ $d$ is odd, an equal-input Markov matrix\/ $M^{}_{C}$ is
  equal-input embeddable if and only if\/ $0 \leqslant c < 1$, and the
  class of such matrices forms a monoid.  \qed
\end{lemma}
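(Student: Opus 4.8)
The plan is to reduce the whole statement to the two explicit exponential formulas already established, so that everything becomes bookkeeping of the summatory parameter under $\exp$ and $\log$. I first record that ``equal-input embeddable'' means $M^{}_C = \ee^Q$ for an \emph{equal-input} generator $Q$, which is strictly stronger than plain embeddability; this stronger requirement is exactly what restores the clean criterion for odd $d$, where Example~\ref{ex:no-go} exhibits genuinely embeddable equal-input matrices with $c>1$.

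For sufficiency, I would start from $0\leqslant c<1$ and invoke Eq.~\eqref{eq:ei-gen}, which gives $Q\defeq\log (M^{}_C)=-\frac{\log(1-c)}{c}\ts A$ with $A=C-c\ts\one$. The prefactor is positive for $0<c<1$, and the case $c=0$ (forcing $M^{}_C=\one$ and $A=\nix$) is covered by l'H\^{o}pital. Since $A$ is an equal-input generator and any positive multiple of one is again an equal-input generator---scaling by $\lambda>0$ merely replaces the common row $(c^{}_1,\ldots,c^{}_d)$ by $(\lambda c^{}_1,\ldots,\lambda c^{}_d)$, preserving non-negativity---the matrix $Q$ is an equal-input generator with $\ee^Q=M^{}_C$.

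For necessity, I would write a given equal-input generator as $Q^{}_{\nts B}=B-\alpha\ts\one$, with $B$ having equal non-negative rows of parameter sum $\alpha\geqslant 0$, and read off the parameter sum of its exponential from $\ee^{Q^{}_{\nts B}}=\one+\varphi(1)\ts Q^{}_{\nts B}$ with $\varphi(1)=\frac{1-\ee^{-\alpha}}{\alpha}$. The off-diagonal $(i,j)$-entry is $\varphi(1)\ts\alpha^{}_j$, so the parameter sum of $\ee^{Q^{}_{\nts B}}$ equals $\varphi(1)\ts\alpha=1-\ee^{-\alpha}$. Because the parameter sum of an equal-input matrix is uniquely determined by its off-diagonal entries (each $c^{}_j$ occurs there once $d\geqslant 2$), this forces $c=1-\ee^{-\alpha}\in[0,1)$.

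The monoid property I would settle using Eq.~\eqref{eq:ci-rule}: the product satisfies $1-c{\ts}''=(1-c)(1-c{\ts}')$, so $c,c{\ts}'\in[0,1)$ yields $1-c{\ts}''\in(0,1]$ and hence $c{\ts}''\in[0,1)$, giving closure; the unit is $\one=M^{}_{\nix}$ with $c=0$. I expect no serious obstacle here, as all the hard analytic work sits in the two cited formulas; the one conceptual point worth flagging is the necessity direction, since it is precisely the fact that \emph{every} equal-input generator exponentiates to parameter sum strictly below $1$ that separates equal-input embeddability from ordinary embeddability and makes the criterion hold in every dimension.
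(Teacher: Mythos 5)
Your proposal is correct and follows essentially the same route as the paper, which proves this lemma by exactly the reduction you describe: sufficiency via Eq.~\eqref{eq:ei-gen}, necessity via the formula $\ee^{t\ts Q^{}_{\nts B}} = \one + \varphi(t)\ts Q^{}_{\nts B}$ showing that every equal-input generator exponentiates to parameter sum $1-\ee^{-\alpha} < 1$, and closure via Eq.~\eqref{eq:ci-rule}. Your explicit remarks on positive rescaling of equal-input generators and on the uniqueness of the parameter sum from the off-diagonal entries are details the paper leaves implicit, but they do not change the argument.
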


The difference to $d$ even is that there are other embeddable cases,
but not with a generator of equal-input type. This matches with the
fact that being equal-input is only a semi-stable property in the
sense of Definition~\ref{def:stable}.  We thus have the following
general statement.

\begin{theorem}\label{thm:equal-input}
  An equal-input Markov matrix\/ $M^{}_{C}$ is equal-input embeddable
  if and only if its parameter sum satisfies\/ $0 \leqslant c < 1$.
  The class of such matrices forms a monoid, with the subset of
  positive ones being a two-sided ideal in this monoid.
    
  Moreover, when\/ $d$ is even, irrespectively of the type of
  generator, no further equal-input Markov matrices are embeddable,
  while additional embeddable cases do exist for\/ $d$ odd, where\/
  $1-c$ then is a negative eigenvalue with even algebraic
  multiplicity. \qed
\end{theorem}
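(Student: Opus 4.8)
The plan is to prove the theorem by assembling the pieces already established in this section, treating the three assertions separately. For the first assertion (equal-input embeddability $\Longleftrightarrow 0\leqslant c<1$), I would argue both directions. For sufficiency, I invoke the explicit computation in Eq.~\eqref{eq:ei-gen}: when $0\leqslant c<1$, the matrix $A=C-c\ts\one$ is an equal-input generator whose spectral radius is $c<1$, so $\log(M^{}_C)=-\frac{\log(1-c)}{c}\ts A$ converges and is itself an equal-input generator, giving $M^{}_C=\ee^Q$ with $Q$ of equal-input type (the limiting case $c=0$ being $M^{}_C=\one=\ee^{\nix}$). For necessity, if $M^{}_C$ is equal-input embeddable, say $M^{}_C=\ee^{Q^{}_{\nts B}}$ with $Q^{}_{\nts B}=B-\alpha\ts\one$ an equal-input generator, then the explicit formula $\ee^{t\ts Q^{}_{\nts B}}=\one+\varphi(t)\ts Q^{}_{\nts B}$ with $\varphi(t)=\frac{1-\ee^{-\alpha t}}{\alpha}$ shows that the parameter sum of $\ee^{Q^{}_{\nts B}}$ equals $c=\varphi(1)\,\alpha=1-\ee^{-\alpha}\in[0,1)$, so necessarily $0\leqslant c<1$.

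For the monoid claim, I would combine the product rule in Eq.~\eqref{eq:ci-rule} with the closure observation already recorded there: if $M^{}_C$ and $M^{}_{C'}$ are equal-input embeddable, then $0\leqslant c,c{\ts}'<1$ forces $c{\ts}''=c+c{\ts}'-c\ts c{\ts}'\in[0,1)$, so $M^{}_CM^{}_{C'}=M^{}_{C''}$ is again equal-input embeddable, and $\one=M^{}_{\nix}$ serves as the identity. For the ideal statement, I note that the product of a positive Markov matrix with any Markov matrix, in either order, is positive (as in the proof of Fact~\ref{fact:ideal}); restricting to the equal-input embeddable monoid, this shows the positive elements form a two-sided ideal.

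The second assertion is where the even/odd dichotomy must be pinned down, and this is the main point requiring care. For $d$ even, I would argue that \emph{any} embedding $M^{}_C=\ee^Q$ (not just an equal-input one) forces $0\leqslant c<1$: from Eq.~\eqref{eq:def-mc} we have $\det(M^{}_C)=(1-c)^{d-1}$, and embeddability requires $\det(M^{}_C)\in(0,1]$ by Proposition~\ref{prop:necessary}{\ts}\eqref{part:two}; since $d-1$ is odd, $(1-c)^{d-1}>0$ already forces $c<1$, and together with $c_i\geqslant 0$ this recovers the equal-input embeddable range, so no new embeddable matrices arise. For $d$ odd the determinant argument fails: $(1-c)^{d-1}$ with $d-1$ even is automatically non-negative even when $c>1$, so the determinant no longer excludes $c>1$. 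Here I would point to Example~\ref{ex:no-go}, which exhibits a symmetric equal-input Markov matrix with $c>1$ that is embeddable via the non-equal-input generator $Q$; its eigenvalue $1-c$ is then negative with algebraic multiplicity $d-1$, which is even, consistent with Proposition~\ref{prop:necessary}{\ts}\eqref{part:neg}.

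The main obstacle is the odd-dimensional existence claim in full generality, rather than merely for $d=3$: one must confirm that for every odd $d$ there genuinely exist embeddable equal-input matrices with $c>1$. The cleanest route is to generalise the mechanism of Example~\ref{ex:no-go} using the equal-input generator $J_d$ (the analogue of $J_3$, satisfying $J_d^2=-J_d$) together with a commuting generator $Q$ having a complex conjugate pair of eigenvalues whose imaginary parts, when exponentiated at a suitable time, push the real eigenvalue $1-c$ negative; the resulting $M=\ee^{tQ}$ then lies in $\cC_d$ with $c>1$ while remaining embeddable. I expect this existence construction, which underlies Lemma~\ref{lem:ei-comm}, to be the genuinely non-routine ingredient, whereas all the closure and determinant arguments are direct consequences of the formulas already derived above.
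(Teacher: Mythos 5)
Your proposal follows the paper's own route: Theorem~\ref{thm:equal-input} carries no separate proof body there precisely because it is assembled, exactly as you assemble it, from Eq.~\eqref{eq:ei-gen} (sufficiency), the parameter identity $\tilde{c}=1-\ee^{-c}$ of Eq.~\eqref{eq:exp-eig} (necessity), the product rule \eqref{eq:ci-rule} (monoid and ideal structure), and the determinant formula \eqref{eq:def-mc} combined with Proposition~\ref{prop:necessary}{\ts}\eqref{part:two} for even $d$. All of those steps in your write-up are correct and match the paper.

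The one genuine issue is your sketch for the odd-dimensional existence claim, which you rightly single out as the non-routine point (the paper itself only exhibits $d=3$, via Example~\ref{ex:no-go} and later Corollary~\ref{coro:exceptional}). As literally stated --- a commuting generator with \emph{a} complex conjugate pair of eigenvalues that gets pushed to a negative value --- the construction fails for $d\geqslant 5$: an equal-input matrix with parameter sum $c$ has spectrum $\{1,1-c,\ldots,1-c\}$, so \emph{all} $d-1$ non-unit eigenvalues of $\ee^{tQ}$ must coincide at the same negative number, and flipping a single conjugate pair produces a matrix that is not in $\cC_d$ at all. Equivalently, $\ee^{tQ}$ must act as a negative scalar on the whole complement of the Perron eigenvector, which forces every one of the $(d-1)/2$ conjugate pairs of eigenvalues of $tQ$ to share one real part and to have imaginary parts that are odd multiples of $\pi$. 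This is exactly where odd $d$ enters, and the fix is short: since $d-1$ is even, choose a real antisymmetric $T$ that annihilates $(1,\ldots,1)^T$ and acts on its orthogonal complement as a direct sum of $(d-1)/2$ planar rotation generators with angle $\pi$. Then $T$ has zero row and column sums, $[T,J_d]=\nix$ (where $J_d$ denotes the constant-input generator with $J_d^2=-J_d$, generalising $J_3$), and $\ee^{T}=\one+2\ts J_d$. For $\kappa$ large enough, $Q=\kappa\ts J_d+T$ is a Markov generator, and $\ee^{Q}=\bigl(\one+(1-\ee^{-\kappa})\ts J_d\bigr)\bigl(\one+2\ts J_d\bigr)=\one+(1+\ee^{-\kappa})\ts J_d$ is a constant-input Markov matrix with parameter sum $c=1+\ee^{-\kappa}>1$, whose eigenvalue $1-c=-\ee^{-\kappa}$ is negative with even algebraic multiplicity $d-1$. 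This reproduces the mechanism of Example~\ref{ex:no-go} and Corollary~\ref{coro:exceptional} in every odd dimension, which is what the theorem's final sentence requires; with this replacement your argument is complete.
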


When $M$ is equal-input embeddable, this does not exclude further
embeddings of a different nature, as we shall see in
Remark~\ref{rem:multi}.  However, more can be said about the
equal-input solutions.  If $Q$ is an equal-input generator, one has
$Q=C - c \ts \one$ and thus
\begin{equation}\label{eq:exp-eig}
  \ee^Q \, = \, \one + \myfrac{1-\ee^{-c}}{c}\ts Q
  \, = \, \myfrac{1-\ee^{-c}}{c} \ts C + \ee^{-c}\ts \one \ts ,
\end{equation}
which means that the summatory parameter $\tilde{c}$ of $\ee^Q$ is
$\tilde{c} = 1 - \ee^{-c}$. Now, let $Q$ and $Q'$ be equal-input
generators, with parameters $c$ and $c^{\ts\prime}$. When
$\ee^Q = \ee^{Q'}$, it is immediate that $c=c^{\ts\prime}$, and
Eq.~\eqref{eq:exp-eig} then implies $Q=Q'$. This gives us the
following uniqueness result, which is a generalisation of
Corollary~\ref{coro:unique}.

\begin{coro}
  If an equal-input Markov matrix\/ $M$ is embeddable as\/ $M=\ee^Q$
  with a generator\/ $Q$ of equal-input type, which is possible if and
  only if\/ $\tilde{c} = c^{}_{\nts M} \in [0,1)$, the generator\/ $Q$
  is unique. For\/ $\tilde{c}\in (0,1)$, it is given by
  Eq.~\eqref{eq:ei-gen}, and by\/ $Q=\nix$ for\/ $\tilde{c}=0$.  \qed
\end{coro}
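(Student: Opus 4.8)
The plan is to read all four assertions off the exponential formula in Eq.~\eqref{eq:exp-eig}, which already records the exact relation between an equal-input generator and the equal-input Markov matrix it produces. First I would parametrise an arbitrary equal-input generator as $Q = B - \alpha\ts\one$, where $B$ has equal rows $(\alpha^{}_1,\ldots,\alpha^{}_d)$ with all $\alpha^{}_i \geqslant 0$ and parameter sum $\alpha = \alpha^{}_1 + \cdots + \alpha^{}_d \geqslant 0$, exactly as in the notation introduced before Eq.~\eqref{eq:ci-rule}. Reading Eq.~\eqref{eq:exp-eig} with its $(C,c)$ replaced by $(B,\alpha)$ gives $\ee^Q = \ee^{-\alpha}\ts\one + \frac{1-\ee^{-\alpha}}{\alpha}\ts B$, so $\ee^Q$ is again equal-input and its summatory parameter is $\tilde{c} = 1 - \ee^{-\alpha}$. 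The crucial observation is that $\alpha \mapsto 1 - \ee^{-\alpha}$ is a continuous, strictly increasing bijection of $[0,\infty)$ onto $[0,1)$; this single fact drives both the stated range and the uniqueness.

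For the existence-and-formula direction, I would start from the given $M = M^{}_C = (1-c^{}_{\nts M})\ts\one + C$ and try to solve $\ee^Q = M$ with $Q$ equal-input. Comparing summatory parameters forces $1 - \ee^{-\alpha} = c^{}_{\nts M}$, hence $\alpha = -\log(1-c^{}_{\nts M})$, which is well defined and non-negative precisely when $c^{}_{\nts M} \in [0,1)$; comparing the equal-rows parts then fixes $B = \frac{\alpha}{c^{}_{\nts M}}\ts C$ and collapses $Q$ to $-\frac{\log(1-c^{}_{\nts M})}{c^{}_{\nts M}}\ts(C - c^{}_{\nts M}\ts\one) = -\frac{\log(1-c^{}_{\nts M})}{c^{}_{\nts M}}\ts A$, which is exactly Eq.~\eqref{eq:ei-gen}. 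Conversely, if $c^{}_{\nts M} \notin [0,1)$, no equal-input $Q$ can reproduce $M$, since every such exponential has $\tilde{c}\in[0,1)$ by the bijection above.

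For uniqueness I would reuse the short computation made just before the corollary: if $Q, Q'$ are equal-input generators with parameters $\alpha, \alpha'$ and $\ee^Q = \ee^{Q'} = M$, then $M$ has summatory parameter $1 - \ee^{-\alpha} = 1 - \ee^{-\alpha'}$, so injectivity yields $\alpha = \alpha'$, and the strictly positive coefficient $\frac{1-\ee^{-\alpha}}{\alpha}$ in Eq.~\eqref{eq:exp-eig} then forces the equal-rows parts, and hence $Q$ and $Q'$, to coincide. In the boundary case $\tilde{c} = 0$ one has $c^{}_{\nts M} = 0$, which (with all $c^{}_i \geqslant 0$) means $M = \one$; then $1 - \ee^{-\alpha} = 0$ gives $\alpha = 0$, so all $\alpha^{}_i = 0$, whence $Q = \nix$, as claimed.

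I do not expect a genuine obstacle, since the heavy lifting already sits in Eq.~\eqref{eq:exp-eig}; the one point that needs real checking is that the matrix $Q$ constructed above is an honest generator and not merely a zero-row-sum matrix. This is immediate, just as in the proof of Theorem~\ref{thm:Kendall}: for $c^{}_{\nts M} \in (0,1)$ the scalar prefactor $-\log(1-c^{}_{\nts M})/c^{}_{\nts M}$ is strictly positive, so $B = \frac{\alpha}{c^{}_{\nts M}}\ts C$ inherits the non-negativity of the off-diagonal entries of $C$, while the l'H\^{o}pital limit at $\alpha = 0$ disposes of the trivial case cleanly.
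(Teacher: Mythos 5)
Your proposal is correct and follows essentially the same route as the paper: the paper also reads the summatory parameter $\tilde{c}=1-\ee^{-c}$ off Eq.~\eqref{eq:exp-eig}, uses injectivity of $c\mapsto 1-\ee^{-c}$ to force equality of parameters for two equal-input solutions and hence $Q=Q'$, and obtains existence and the range $[0,1)$ from the construction in Eq.~\eqref{eq:ei-gen}. Your additional explicit checks (the equal-rows comparison fixing $B$, the generator property via the positive prefactor, and the boundary case $\tilde c=0$) are exactly the points the paper settles in the discussion preceding the corollary and in Theorem~\ref{thm:equal-input}.
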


One can further exploit the monoid property of embeddable equal-input
Markov matrices. If $Q$ and $Q'$ are equal-input generators, one finds
$\ee^{Q} \ee^{Q'} = \ee^{Q''}$ with the new generator
\begin{equation}\label{eq:neq-Q}
\begin{split}  
  Q'' \, & = \, \myfrac{c+c^{\ts\prime}}
  {1-\ee^{-(c+c^{\ts\prime})}} \left( \myfrac{1-\ee^{-c}}{c} \, Q
    + \myfrac{1-\ee^{-c^{\ts\prime}}}{c^{\ts\prime}} \, Q'
    + \myfrac{(1-\ee^{-c})(1-\ee^{-c^{\ts\prime}})}
     {c \ts\ts c^{\ts\prime}} \, Q \ts Q' \right) \\
  & = \, \myfrac{c+c^{\ts\prime}}{c \ts
      \bigl(1-\ee^{-(c+c^{\ts\prime})}\bigr)}
    \bigl( \ee^{-c^{\ts\prime}} (1-\ee^{-c}) \ts Q +
    \myfrac{c}{c^{\ts\prime}}\ts (1-\ee^{-c^{\ts\prime}})
    \ts Q' \ts \bigr) ,
\end{split}    
\end{equation}
where the second line follows from the identity
$Q\ts Q' = - c^{\ts\prime} Q$.  When $[Q, Q']=\nix$, which is
equivalent to $c^{\ts\prime} C = c\ts\ts C'$, Eq.~\eqref{eq:neq-Q}
simplifies to $Q'' = Q + Q'$, in line with Lemma~\ref{lem:prod}. In
any case, the summatory parameters of the exponentials are related by
\[
  \tilde{c}^{\ts\prime\prime} \, = \, \tilde{c} +
  \tilde{c}^{\ts\prime} - \tilde{c} \ts\ts \tilde{c}^{\ts\prime}
  \, = \, 1 - \ee^{-(c + c^{\ts\prime})}
\]
in accordance with Eq.~\eqref{eq:ci-rule}.

\begin{remark}\label{rem:constant-input}
  The class of equal-input Markov matrices contains a subclass,
  defined by the condition $c^{}_{1} = c^{}_{2} = \ldots = c^{}_{d}$,
  called \emph{constant-input} Markov matrices. For $d=4$, they in
  particular cover the \emph{Jukes--Cantor} mutation matrices
  \cite{JC}.  Constant-input Markov matrices are closed under
  multiplication, so the previous analysis can be restricted to this
  subclass. Since $M$ from Example~\ref{ex:no-go} is a constant-input
  Markov matrix, we get the result of Theorem~\ref{thm:equal-input}
  also for this subclass, with the same distinction between $d$ even
  and $d$ odd.  We shall meet this class again in
  Section~\ref{sec:c-input}.  \exend
\end{remark}

Let $C$ be as above, with summatory parameter $c$ together with
$c_i \geqslant 0$ for all $1\leqslant i \leqslant d$, and consider the
equal-input generator $Q^{}_{C} = C - c \ts \one$. Since
$Q^{}_{C} = \nix$ when $c=0$, let us assume $c>0$. Then, we have
$C\ne \nix$ and $Q^{}_{C} \, C = \nix$, so $Q^{}_{C}$ has minimal
polynomial $z \ts \ts (z+c)$. In particular, $Q^{}_{C}$ is
diagonalisable, with eigenvalues $0$ and $-c$, the latter with
multiplicity $d \nts - \nts 1$.  In this context, one has the
following extension of Fact~\ref{fact:one-dim}.

\begin{coro}\label{coro:eq-input-gen}
  Let\/ $Q$ be a Markov generator that satisfies any of the equivalent
  properties of Fact~\textnormal{\ref{fact:one-dim}}, say\/
  $Q^2 = - c \ts Q$ with\/ $c>0$. Then, $0$ is a simple eigenvalue
  of\/ $Q$ if and only if\/ $Q$ is an equal-input generator with
  summatory parameter\/ $c>0$.
\end{coro}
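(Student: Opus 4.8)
The plan is to read off everything from the spectral structure that the standing hypothesis $Q^2 = -c\,Q$ (with $c>0$) forces, namely that $Q$ is diagonalisable with $\sigma(Q)\subseteq\{0,-c\}$, so that the algebraic multiplicities of $0$ and of $-c$ add up to $d$. Throughout I write $\mathbf{1}$ for the all-ones \emph{column} vector; since $Q$ has vanishing row sums, one has $Q\,\mathbf{1}=0$, so $\mathbf{1}$ is always an eigenvector of $Q$ for the eigenvalue $0$. The proof then splits into the two implications of the stated equivalence.

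For the easy direction, suppose $Q=C-c\,\one$ is an equal-input generator with summatory parameter $c>0$, where $C$ has equal rows $(c^{}_1,\ldots,c^{}_d)$. Here I would simply note that $C$ has rank $1$ with $C\,\mathbf{1}=c\,\mathbf{1}$, so its spectrum is $\{c,0,\ldots,0\}$ with $0$ of multiplicity $d-1$; hence $\sigma(Q)=\{0,-c,\ldots,-c\}$, and $0$ is a simple eigenvalue of $Q$, with eigenvector $\mathbf{1}$. This is essentially the computation already recorded around Eqs.~\eqref{eq:def-mc} and~\eqref{eq:ei-gen}.

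For the converse, assume $0$ is a simple eigenvalue of $Q$. Since the eigenvalue is simple, $\ker Q$ is one-dimensional, and $Q\,\mathbf{1}=0$ gives $\ker Q=\RR\,\mathbf{1}$. Rewriting $Q^2=-c\,Q$ as $Q\,(Q+c\,\one)=\nix$ shows that the image (column space) of $Q+c\,\one$ is contained in $\ker Q=\RR\,\mathbf{1}$; since $(Q+c\,\one)\,\mathbf{1}=c\,\mathbf{1}\neq 0$, this image is exactly $\RR\,\mathbf{1}$. Consequently every column of $Q+c\,\one$ is a multiple of $\mathbf{1}$, which is to say $Q+c\,\one=\mathbf{1}\,w^{T}$ for some row vector $w=(w^{}_1,\ldots,w^{}_d)$; equivalently, $Q=C-c\,\one$ with $C=\mathbf{1}\,w^{T}$ a matrix of equal rows.

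It remains to verify that $C$ yields a genuine equal-input generator. The parameter sum of $C$ is $\sum_j w^{}_j$, and the vanishing row sums of $Q$ force $\sum_j w^{}_j - c = 0$, so the parameter sum equals $c>0$. For non-negativity I would invoke $d\geqslant 2$: for each column index $j$ there exists some $i\neq j$, and the Metzler property of $Q$ then gives $w^{}_j=Q^{}_{ij}\geqslant 0$. Hence $C$ has non-negative entries and parameter sum $c$, so $Q=C-c\,\one$ is indeed an equal-input generator, completing the equivalence. The only step carrying any real content—and the one I would flag as central—is the passage from \textquotedblleft rank one with column space $\RR\,\mathbf{1}$\textquotedblright{} to \textquotedblleft equal rows\textquotedblright; everything else is short spectral bookkeeping together with a check of the generator conditions.
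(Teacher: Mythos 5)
Your proof is correct and follows essentially the same route as the paper: the key step in both is that $Q\,(Q+c\,\one)=\nix$ together with simplicity of the eigenvalue $0$ forces every column of $Q+c\,\one$ to be a multiple of the all-ones vector, i.e.\ $Q+c\,\one$ has equal rows, after which the generator conditions (non-negativity via the Metzler property, parameter sum via vanishing row sums) are read off just as you do. The only differences are presentational: you spell out the generator verification that the paper states tersely, and your rank-one spectral computation for the easy direction matches the computation the paper carries out just before the corollary and then cites.
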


\begin{proof}
  Under the assumptions, we have $Q (Q+c\ts \one)=\nix$.  When
  $0\in\sigma (Q)$ is simple, with eigenvector
  $u = (1,1, \ldots , 1)^T$, we see that each column of $Q+c\ts\one$
  must be a scalar multiple of $u$. If the $i$-th column is
  $c^{}_{i} \ts u$ with $c^{}_{i} \in \RR$, we have
  $Q = C - c\ts\one$, which is a generator if and only if all
  $c^{}_{i} \geqslant 0$ together with
  $c = c^{}_{1} + \cdots + c^{}_{d}$. Since $0$ is simple, we must
  have $c>0$.
  
  The converse direction was shown above.
\end{proof}

For $d\geqslant 3$, an equal-input generator inevitably has multiple
eigenvalues such that its minimal and characteristic polynomials
differ. Consequently, by Fact~\ref{fact:comm}, the centraliser is
non-Abelian. Interestingly, on the level of generators, one has the
following elementary result, which can be seen as a generalisation of
Fact~\ref{fact:J-comm}.

\begin{lemma}\label{lem:ei-comm}
  Let\/ $Q = C \! - \! c \ts \one$ be an equal-input generator, with
  non-negative parameters\/ $c^{}_{1}, \ldots , c^{}_{d}$ and\/
  $c = c^{}_{1} + \cdots + c^{}_{d}$. Then, a Markov generator\/ $X$
  commutes with\/ $Q$ if and only if\/
  $(c^{}_{1} , \ldots , c^{}_{d}) X = 0$.  When\/ $c>0$, this is
  equivalent to saying that\/
  $\frac{1}{c} (c^{}_{1} , \ldots , c^{}_{d})$ is an equilibrium state
  of\/ $X$.
\end{lemma}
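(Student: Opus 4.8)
The plan is to exploit the rank-one structure of the matrix $C$. Writing $u = (1, \ldots , 1)^{T}$ for the all-ones column vector and $r = (c^{}_1, \ldots , c^{}_d)$ for the common row of $C$, one has the factorisation $C = u \ts r$, since $C^{}_{ij} = c^{}_j = u^{}_i \ts r^{}_j$. Because $\one$ is central, the commutator simplifies to $[Q, X] = [C - c \ts \one, X] = [C, X] = [u \ts r, X]$, so the condition $[Q,X]=\nix$ is equivalent to $(u \ts r)\ts X = X \ts (u \ts r)$. The whole argument then reduces to understanding this single identity between two rank-one-type expressions.

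Next I would evaluate both sides separately. On the one hand, $(u \ts r)\ts X = u \ts (r X)$, which is the outer product of $u$ with the row vector $r X$; its $(i,j)$-entry is $(r X)^{}_j$, so this matrix vanishes precisely when $r X = 0$. On the other hand, $X \ts (u \ts r) = (X u)\ts r$, and here the key input is that $X$, being a Markov generator, has vanishing row sums, which is exactly the statement $X u = 0$; consequently $X \ts (u \ts r) = \nix$. Therefore $[Q,X] = \nix$ holds if and only if $u \ts (r X) = \nix$, and since $u \ne 0$, this is in turn equivalent to $r X = 0$, that is, to $(c^{}_1, \ldots , c^{}_d)\ts X = 0$. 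This already proves the first claim, and it is worth noting that only the zero-row-sum property of $X$ enters, in both directions; the Metzler condition plays no role here.

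For the final assertion I would assume $c > 0$. Then $\frac{1}{c}(c^{}_1, \ldots , c^{}_d)$ has non-negative entries, as all $c^{}_i \geqslant 0$, which sum to $\frac{1}{c}\ts c = 1$, so it is a probability row vector; moreover $r X = 0$ is clearly equivalent to $\frac{1}{c}\ts r X = 0$. By the definition of an equilibrium (stationary) state of the generator $X$ as a probability row vector annihilated by $X$ from the left, this says exactly that $\frac{1}{c}(c^{}_1, \ldots , c^{}_d)$ is an equilibrium state of $X$, as claimed. There is no serious obstacle in this argument; the only point requiring a moment's care is the passage from the matrix identity $u \ts (r X) = \nix$ to the vector identity $r X = 0$, which rests on $u$ being nonzero, together with the observation that $X u = 0$ encodes the defining row-sum condition of a generator.
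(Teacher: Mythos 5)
Your proof is correct and follows essentially the same route as the paper: both reduce $[Q,X]=\nix$ to $[C,X]=\nix$, observe that $XC=\nix$ because the zero row sums of $X$ kill the constant columns of $C$ (your $Xu=0$ with $C=u\ts r$), and conclude that commutation is equivalent to $CX=\nix$, i.e.\ to $rX=0$. Your explicit rank-one factorisation is merely a notational sharpening of the paper's argument, with the minor bonus that it absorbs the $c=0$ case uniformly rather than treating it separately.
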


\begin{proof}
  When $c=0$, which means $Q=\nix$ and
  $c^{}_{1} = \cdots = c^{}_{d} = 0$, every generator $X$ commutes
  with $Q$ and the statement is trivial. Next, assume $c>0$ and
  $Q = C - c\ts \one$. A matrix commutes with $Q$ if and only if it
  commutes with $C$. When $X$ is a generator, one has $XC=\nix$
  because $C$ has constant columns. Then, $[X,C] = \nix$ means
  $CX=\nix$, each row of which is the condition stated. Since
  $\frac{1}{c} (c^{}_{1} , \ldots , c^{}_{d})$ is a probability vector
  for $c>0$, the last claim is clear.
\end{proof}

Notice that such a generator $X$ may have several equilibrium
states, which form a convex set.  Then, when $c>0$, the
characterisation from Lemma~\ref{lem:ei-comm} specifies only one of
them.

\begin{remark}\label{rem:I-mat}
  There is a doubly stochastic, constant-input Markov matrix,
  $I_d \defeq \frac{1}{d}\ts C$ with parameters
  $c^{}_{1} = \ldots = c^{}_{d} =1$, which has several interesting
  properties. Clearly, $\det (I_d) =0$, so $I_d$ is not
  embeddable, but lies on the boundary of $E_d$. This is also clear
  from the relation $I_d = \lim_{t\to\infty} \ee^{t Q}$ with $Q$ being
  any irreducible, doubly stochastic generator in $d$ dimensions.
  More importantly, as can be shown by the methods used for
  \cite[Thm.~2.7]{Joh3}, the subsets of $E_d$ of equal-input or of
  certain doubly stochastic matrices are \emph{star-shaped} with respect to
  $I_d$, which is visible in Figure~\ref{fig:convex} for $d=2$. Since
  $I_d$ is also circulant and symmetric, this will be useful for
  later deformation arguments.  \exend
\end{remark}

\section{Circulant matrices}\label{sec:circulant}

A $d \! \times \! d$-matrix is called \emph{circulant} if each of its
rows emerges from the previous one by cyclically shifting it one
position to the right (see below for more).  Such matrices have a rich
theory of their own; see \cite{PJD} for a detailed exposition. Here,
we are interested in circulant matrices that are also Markov matrices
or generators, respectively. The interested reader may consult
\cite[Ch.~7.3.2]{Steel} for how these matrices fit into the setting of
`group-based models', and \cite{SW} for an extension to
`semigroup-based models'. The latter classification in particular 
includes the equal-input models discussed in 
Section~\ref{sec:equal-input}.

For $d=2$, a Markov matrix is circulant precisely when it is a Markov
matrix of constant-input type, which means
$M=\left( \begin{smallmatrix} 1-a & a \\ a & 1-a \end{smallmatrix}
\right)$. By Theorem~\ref{thm:Kendall}, such an $M$
is embeddable if and only
if $0 \leqslant a < \frac{1}{2}$. In fact, with
$Q = \alpha \left( \begin{smallmatrix} -1 & 1 \\ 1 & -1
\end{smallmatrix} \right)$, one can rewrite our earlier
formula as
\begin{equation}\label{eq:2-circ}
    \ee^{t Q} \, = \, \ee^{-\alpha t}  \cosh (\alpha t) \ts \one
    + \ee^{-\alpha t} \sinh (\alpha t) \begin{pmatrix}
    0 & 1 \\ 1 & 0 \end{pmatrix}
    \, = \, \one + \myfrac{1-\ee^{-2 \alpha t}}{2}
    \begin{pmatrix} -1 & 1 \\ 1 & -1 \end{pmatrix} .
\end{equation}
In particular, whenever a circulant $M$ with $d=2$ is embeddable,
Corollary~\ref{coro:unique} together with Eq.~\eqref{eq:log-calc}
implies that this is only possible with a circulant generator, which
also fits Proposition~\ref{prop:extra}. The explicit computations rely
on the formula $\ee^x = \cosh (x) + \sinh (x)$, which can be seen as a
decomposition of the exponential function over the cyclic group $C_2$.

Let us look at circulant matrices in more generality.  By
\cite[Thm.~3.1.1]{PJD}, $B\in\Mat (d,\CC)$ is circulant if and only if
$B$ commutes with the permutation matrix
\begin{equation}\label{eq:def-P}
    P \, = \, \left( \begin{array}{c | c @{\;\;} c @{\,\;} c}
    0  &  &  & \\
    \vdots &  & \one^{}_{d-1} & \\
    0 & & & \\ \hline
    1 & 0 & \cdots & 0
    \end{array} \right),
\end{equation}
which is the standard $d$-dimensional representation of the cyclic
permutation $(1 2 \ldots d)$. Since $P$ has simple spectrum, namely
$\sigma (P) = \{ \ee^{2 \pi \ii m/d} : 0 \leqslant m \leqslant d \! -
\! 1 \}$, and since $P$ is also a doubly stochastic matrix, as is any
power of it, one finds the following consequence.

\begin{fact}
  The circulant matrices in\/ $\Mat (d,\CC)$, respectively in\/
  $\Mat (d,\RR)$, are the elements of the Abelian matrix ring\/ $\CC[P]$,
  respectively\/ $\RR[P]$. The\/ $d$-dimensional, circulant Markov
  matrices are precisely the convex combinations of\/
  $\one, P, P^2, \ldots , P^{d-1}$, which form a simplex of
  dimension\/ $d\! - \! 1$ within\/ $\Mat (d,\RR)$, and also a monoid
  under matrix multiplication. In particular, every circulant Markov
  matrix is doubly stochastic. \qed
\end{fact}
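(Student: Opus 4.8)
The plan is to split the statement into its three assertions --- the ring description, the simplex description of the circulant Markov matrices, and the monoid/double-stochasticity claims --- and treat them in order, leaning throughout on the characterisation of circulant matrices as the centraliser of $P$ and on the simplicity of $\sigma (P)$. For the ring description, I would note that by the cited result of \cite[Thm.~3.1.1]{PJD}, a matrix is circulant precisely when it commutes with $P$, so the circulant matrices are exactly $\Co (P)$. Since $P$ has $d$ distinct eigenvalues, it has simple spectrum, hence is cyclic in the sense of Fact~\ref{fact:comm}, and property~(5) of that fact yields $\Co (P) = \RR [P]$ over the reals. For the complex statement I would run the same argument over $\CC$: as $P$ is diagonalisable with distinct eigenvalues, its complex centraliser consists of all matrices diagonal in the eigenbasis, and a Vandermonde argument on the distinct eigenvalues identifies these with the polynomials $\CC [P]$. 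Commutativity of either ring is automatic, since polynomials in a single matrix commute.

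Next, to pin down the circulant Markov matrices, I would use that $\{ \one, P, \ldots, P^{d-1}\}$ is a basis of $\RR [P]$: one has $P^d = \one$, and these $d$ permutation matrices have pairwise disjoint support, since $P$ represents a single $d$-cycle and thus $P^j$ and $P^{j'}$ place their nonzero entries in distinct columns of each row whenever $j \not\equiv j' \pmod d$. Writing a real circulant matrix as $B = \sum_{j=0}^{d-1} a_j P^j$, this disjointness means that each row of $B$ is a permutation of the coefficient vector $(a_0, \ldots, a_{d-1})$. Hence all entries of $B$ are non-negative if and only if every $a_j \geqslant 0$, while each row sum equals $\sum_j a_j$, so it equals $1$ if and only if $\sum_j a_j = 1$. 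Thus $B$ is Markov exactly when $(a_0, \ldots, a_{d-1})$ is a probability vector, that is, when $B$ is a convex combination of $\one, P, \ldots, P^{d-1}$; linear independence of these vertices makes the resulting convex hull a simplex of dimension $d\!-\!1$.

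Finally, the monoid property follows because $\RR [P]$ is closed under multiplication (being a ring) and products of Markov matrices are again Markov, so the circulant Markov matrices are closed under multiplication and contain the unit $\one$. Double stochasticity is then immediate in two complementary ways: each $P^j$ is doubly stochastic and the doubly stochastic matrices form a convex set, so any convex combination remains doubly stochastic; equivalently, the same disjoint-support count applied to columns shows that every column sum of $B$ also equals $\sum_j a_j = 1$. The only step that requires a little care is the complex centraliser computation, since Fact~\ref{fact:comm} is phrased over $\RR$; but this is handled by the elementary diagonalisation-plus-Vandermonde observation above, so I do not anticipate a genuine obstacle.
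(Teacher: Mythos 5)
Your proof is correct and takes essentially the same route the paper intends for this Fact, which is stated with only a \qed on the strength of the preceding sentence: circulant $=$ centraliser of $P$ by the cited result of Davis, $\Co(P)=\RR[P]$ (resp.\ $\CC[P]$) because $P$ has simple spectrum, hence is cyclic in the sense of Fact~\ref{fact:comm}, and the Markov/doubly-stochastic claims from the fact that the powers of $P$ are permutation matrices. The details you add --- the Vandermonde argument over $\CC$ and the disjoint-support count identifying the Markov circulants with the probability vectors of coefficients --- are exactly the routine verifications the paper leaves implicit.
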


Clearly, the exponential of a circulant matrix is again circulant.
Moreover, being circulant is a stable property. This can be seen by a
deformation argument on the basis of Remark~\ref{rem:I-mat} as
follows.  Given $M$, select a circulant matrix $M'$ with simple
spectrum near the matrix
$I_d = \frac{1}{d} (\one + P + \ldots + P^{d-1})$ from
Remark~\ref{rem:I-mat} such that $ \alpha M' + (1-\alpha) M$ is
embeddable for all $\alpha\in [0,1]$, which is possible, and apply
\cite[Cor.~6]{Davies} to get the approximation property.  By
Proposition~\ref{prop:extra}, we thus get the following consequence.

\begin{coro}\label{coro:circ}
  Any embeddable circulant Markov matrix is circulant-embeddable.
  When its centraliser is Abelian, it is only circulant-embeddable.
  \qed
\end{coro}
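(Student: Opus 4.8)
The plan is to obtain both claims as direct instances of Proposition~\ref{prop:extra}, taken with Property $\mathsf{S}$ being ``circulant''. The real content is therefore to verify that being circulant is a \emph{stable} property in the sense of Definition~\ref{def:stable}. Conditions (1) and (2) are routine: if $M$ is circulant then $A = M - \one$ lies in the commutative ring $\RR[P]$, so every power $A^{\ell}$, and hence all of $\alg(A)$, is circulant, which supplies the generator analogue demanded in (1); and since $\RR[P]$ is a closed subspace of $\Mat(d,\RR)$, the circulant property persists under limits, giving (2).

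The substantive point is Condition (3), the approximation of $M$ by cyclic circulant embeddable matrices, and this is where I expect the main difficulty to lie. I would invoke Remark~\ref{rem:I-mat}, according to which the embeddable circulant matrices are star-shaped with respect to the boundary point $I_d = \frac{1}{d}(\one + P + \cdots + P^{d-1})$. Given $M$, I would then choose a circulant matrix $M'$ of simple spectrum lying sufficiently close to $I_d$; such $M'$ exist because the eigenvalues of a circulant matrix are $\sum_{j} a_j \omega^{jk}$ with $\omega = \ee^{2\pi\ii/d}$, which are distinct for generic coefficients, so that simple spectrum is generic within the simplex $\RR[P]\cap\cM_d$. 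For $M'$ chosen close enough to $I_d$, the star-shaped structure keeps the whole segment $\alpha M' + (1-\alpha) M$, $\alpha\in[0,1]$, embeddable, as in Remark~\ref{rem:I-mat}. The deformation result \cite[Cor.~6]{Davies} then yields embeddable circulant matrices of simple spectrum arbitrarily close to $M$, and simple spectrum makes these approximants cyclic by Fact~\ref{fact:comm}. This completes the verification of (3).

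With ``circulant'' now known to be stable, both assertions drop out of Proposition~\ref{prop:extra}. Its first clause gives an embedding $M = \ee^Q$ with $Q$ itself a circulant generator, which is precisely circulant-embeddability. For the second assertion I would note that, by Fact~\ref{fact:comm}, an Abelian centraliser is equivalent to $M$ being cyclic; the second clause of Proposition~\ref{prop:extra} then forces \emph{every} generator $Q$ with $\ee^Q = M$ to be circulant, so that $M$ is only circulant-embeddable.
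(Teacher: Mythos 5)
Your proposal is correct and follows essentially the same route as the paper: the paper also establishes that being circulant is a stable property via the deformation argument of Remark~\ref{rem:I-mat} (choosing a circulant $M'$ with simple spectrum near $I_d$ so that the segment $\alpha M' + (1-\alpha)M$ stays embeddable, then invoking \cite[Cor.~6]{Davies}), and then applies Proposition~\ref{prop:extra} to obtain both claims. Your write-up merely makes explicit the routine verifications of Conditions (1) and (2) and the cyclic/Abelian-centraliser link from Fact~\ref{fact:comm}, which the paper leaves implicit.
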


For $d\in\NN$, let $C_d = \ZZ/\nts d \ts \ZZ$ be the cyclic group of
order $d$, represented as $C_d = \{ 0,1, \ldots, d \nts - \! 1\nts\}$ with
addition modulo $d$, and let $\chi^{}_{m} (j) \defeq \omega^{m j}$
with $m,j\in C_d$ and $\omega = \ee^{2 \pi \ii/d}$ define the
\emph{characters} $\chi^{}_{m}$ of $C_d$, which satisfy the well-known
orthogonality relations \cite[Thm.~16.4]{JL}
\[
    \myfrac{1}{d} \sum_{m=0}^{d-1} 
       \overline{\chi^{}_{m}} (k)\ts \chi^{}_{m} (\ell)
       \, = \, \delta^{}_{k,\ell} 
       \quad \text{ and } \quad
       \myfrac{1}{d} \sum_{k=0}^{d-1} 
       \overline{\chi^{}_{m}} (k)\ts \chi^{}_{n} (k)
       \, = \, \delta^{}_{m,n} \ts .
\]
With this, some elementary calculations establish the 
following result.

\begin{fact}\label{fact:cyclic-deco}
  Let\/ $\omega = \ee^{2 \pi \ii /d}$.  Then, the functions\/
  ${\displaystyle f^{(d)}_m} \! : \, \RR \xrightarrow{\quad} \CC$
  with\/ $m\in C_d$, defined by
\[
     t \, \mapsto \,  {\displaystyle f^{(d)}_m (t)}
     \, \defeq \, \myfrac{1}{d}
     \sum_{\ell=0}^{d-1} \overline{\chi^{}_{m}}
     (\ell) \ts \ee^{\omega^{\ell} t} ,
\]   
  satisfy the following properties.
\begin{enumerate}\itemsep=2pt
\item For any\/ $r\in C_d$, one has the decomposition\/
  $\ee^{\omega^{r} t} = \sum_{m=0}^{d-1} {\displaystyle \chi^{}_{m}
    (r) f^{(d)}_{m} (t)}$.
  In particular, this gives\/
  $\ee^{t} = f^{(d)}_0 (t) + f^{(d)}_1 (t) + \cdots + f^{(d)}_{d-1}
  (t)$ for\/ $r=0$.
\item For any\/ $m\in C_d$, the function\/ $f^{(d)}_{m}$ possesses a
  globally convergent Taylor series, namely\/
  ${\displaystyle f^{(d)}_{m} (t) } = \sum_{\ell\geqslant 0}
  \frac{t^{\ell d + m}}{(\ell d + m)!} $, and is thus real-valued.
  \qed
\end{enumerate}  
\end{fact}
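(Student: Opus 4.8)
The plan is to reduce both claims to the character orthogonality relations recalled just before the statement, treating part (1) as a finite-sum identity and part (2) as an interchange of summation in the exponential series.

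For part (1), I would substitute the definition of $f^{(d)}_m$ directly into the right-hand side. Writing $\chi^{}_m (r) = \omega^{mr}$ and $\overline{\chi^{}_m}(\ell) = \omega^{-m\ell}$, one obtains, after interchanging the two finite sums,
\[
\sum_{m=0}^{d-1} \chi^{}_m (r) \ts f^{(d)}_m (t)
   \, = \, \myfrac{1}{d} \sum_{\ell=0}^{d-1} \ee^{\omega^{\ell} t}
   \sum_{m=0}^{d-1} \omega^{m(r-\ell)} .
\]
The inner sum equals $d \ts \delta^{}_{r,\ell}$ by the first orthogonality relation (equivalently, by a geometric-series evaluation), so only the term $\ell = r$ survives and the right-hand side collapses to $\ee^{\omega^{r} t}$, as claimed. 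The special case $r=0$ then follows at once from $\omega^0 = 1$, which gives $\chi^{}_m (0) = 1$ for every $m$.

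For part (2), I would insert the globally convergent series $\ee^{\omega^{\ell} t} = \sum_{k\geqslant 0} \omega^{\ell k}\ts t^k / k!$ into the definition of $f^{(d)}_m$ and interchange the finite sum over $\ell$ with the infinite sum over $k$, which leaves
\[
   f^{(d)}_m (t) \, = \, \sum_{k\geqslant 0} \myfrac{t^k}{k!}
   \cdot \myfrac{1}{d} \sum_{\ell=0}^{d-1} \omega^{\ell(k-m)} .
\]
Here the inner average equals $1$ when $k \equiv m \pmod d$ and vanishes otherwise, again by orthogonality. Selecting the surviving indices $k = \ell d + m$ with $\ell \geqslant 0$ yields the stated Taylor series. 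Since its coefficients are real, $f^{(d)}_m$ is real-valued on $\RR$, and the infinite convergence radius is immediate because the series is a subseries of that of $\ee^{\lvert t \rvert}$.

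The whole argument is elementary once the orthogonality relations are in hand; the only genuine obstacle is the justification of the term-by-term interchange of summation in part (2), which is routine here since the exponential series converges absolutely and uniformly on compact subsets of $\RR$, so the double sum may be reordered freely.
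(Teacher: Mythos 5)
Your proposal is correct and follows exactly the route the paper intends: the Fact is stated with the proof left as ``elementary calculations'' based on the character orthogonality relations recalled just before it, and your two computations (orthogonality over characters for part (1), orthogonality over group elements after expanding the exponential series for part (2), with the interchange justified by absolute convergence and finiteness of the sum over $\ell$) are precisely those calculations.
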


Due to the first property, this can be considered as a decomposition
of the exponential function over the cyclic group $C_d$, thus
generalising the earlier case, $d=2$. If $d \ts |D$, so $D = k d$ with
$k\in\NN$, one also has the summatory identity
\begin{equation}\label{eq:fun-sum}
   \sum_{\ell=0}^{k-1} f^{(D)}_{\ell d + r}
   \, = \, f^{(d)}_{r}
\end{equation}
for $r\in C_d$. We shall return to these functions after an explicit
treatment of $d=3$ and $d=4$.

\subsection{A cyclic model for $d=3$}

Let $P$ be the matrix from Eq.~\eqref{eq:def-P} for $d=3$, 
\[
     P \, = \, \begin{pmatrix} 0 & 1 & 0 \\ 0 & 0 & 1 \\
     1 & 0 & 0 \end{pmatrix} ,
\]
which has spectrum $\sigma (P) = \{ 1, \omega , \omega^2\}$ with
$\omega = \ee^{2 \pi \ii/ 3} = - \frac{1}{2} + \ii \frac{\sqrt{3}}{2}$
a primitive third root of unity, so $\omega^2 = \overline{\omega}$ and
$P^3 = \one$.  Also, by Fact~\ref{fact:comm}, one has
$\Co (P) = \RR [P] = \langle \one, P, P^2 \rangle^{}_{\RR}$. Now, $P$
is diagonalisable as
$U^{-1} P \ts\ts U = \diag (1,\omega, \overline{\omega} \ts )$ with
the unitary Fourier matrix\footnote{Note that our $U$ actually is
the complex conjugate of the matrix used for discrete
Fourier transform.} 
\[
     U \, = \, \myfrac{1}{\sqrt{3}} \begin{pmatrix} 1 & 1 & 1 \\
     1 & \omega & \overline{\omega} \\ 1 & 
     \overline{\omega} & \omega
     \end{pmatrix} .
\]
Next, define two Markov generators as $K_1 = P - \one$, which equals
$Q$ from Eq.~\eqref{eq:J-def}, and $K_2 = P^2 - \one$.  They satisfy
\[
    K^{2}_{1} \, = \, K^{}_{2} - 2 \ts K^{}_{1} \, , \quad
    K^{2}_{2} \, = \, K^{}_{1} - 2 \ts K^{}_{2} \, , \quad
    K^{}_{1} K^{}_{2} \, = \, K^{}_{2} K^{}_{1}
    \, = \, - (K^{}_{1} + K^{}_{2} )
\]
and thus generate a two-dimensional matrix algebra over $\RR$, which
is an Abelian subalgebra of $\cA_0$ from Eq.~\eqref{eq:A0}. This
subalgebra consists of the matrices
\[
    \alpha K_1 + \beta K_2 \, = \, \begin{pmatrix}
    -\alpha-\beta & \alpha & \beta \\ \beta & -\alpha-\beta & \alpha \\
    \alpha & \beta & - \alpha - \beta \end{pmatrix} 
\]
with $\alpha, \beta \in \RR$, which are all circulant.  Since the rate
matrices $K_1$ and $K_2$ commute, one has
$\ee^{\alpha K_1 + \beta K_2} = \ee^{\alpha K_1} \ee^{\beta K_2}$,
wherefore it is natural to set
\[
    D_1 \, \defeq \, U^{-1} K_1 \ts U \, = \, \diag (0,
     \omega \nts - \nts 1, \overline{\omega} \nts - \nts 1)
    \quad \text{and} \quad
    D_2 \, \defeq \, U^{-1} K_2 \ts\ts U \, = \, \diag (0, 
     \overline{\omega} \nts - \nts 1, \omega \nts - \nts 1) \ts .
\] 
Now, with $\alpha, \beta \in \RR$ and
$\eta \defeq \ee^{\alpha\ts \omega + \beta \ts \overline{\omega} -
  (\alpha + \beta)}$, we get
\begin{equation}\label{eq:circulant}
\begin{split}
   \exp (\alpha K_1 + \beta K_2) \, & = \, U 
   \exp (\alpha D_1 + \beta D_2) \, U^{-1} 
   \, = \, U \diag ( 1 , \eta, \overline{\eta} \ts\ts ) 
        \, U^{-1}  \\[2mm]
   & = \, \one + x (\alpha, \beta) K_1 + y (\alpha, \beta ) K_2
\end{split}   
\end{equation}
with
$x (\alpha, \beta) = \frac{1}{3} \bigl( 1 + 2 \ts \ee^{-\gamma} \cos
\bigl( \delta - \frac{2 \pi}{3}\bigr)\bigr)$ and
$y (\alpha, \beta) = x( \beta, \alpha)$, where
$\gamma = \frac{3}{2} (\alpha + \beta)$ and
$\delta = \frac{\sqrt{3}}{2} (\alpha - \beta)$.  Note that
$ x (\alpha, \beta) + y (\alpha , \beta ) = \frac{2}{3} \bigl( 1 -
\ee^{-\gamma} \cos (\delta)\bigr)$.  Another way to write the result,
with $\psi = \frac{2 \pi}{3}$, is
\[
    \exp (\alpha K_1 + \beta K_2) \, = \,
    \myfrac{1}{3} \begin{pmatrix} 1 & 1 & 1 \\
    1 & 1 & 1 \\ 1 & 1 & 1 \end{pmatrix} + 
    \myfrac{2 \ts \ee^{-\gamma}}{3} \begin{pmatrix}
    \cos (\delta) & \cos (\delta - \psi ) &
    \cos (\delta + \psi) \\ 
     \cos (\delta + \psi) & \cos (\delta ) &
    \cos (\delta - \psi) \\  
    \cos (\delta - \psi) & \cos (\delta + \psi ) &
    \cos (\delta ) \end{pmatrix} ,
\]
where $\cos (\delta) + \cos (\delta+\psi) + \cos (\delta-\psi) =0$ 
holds for all $\delta$. 

\begin{figure}
\includegraphics[width=0.6\textwidth]{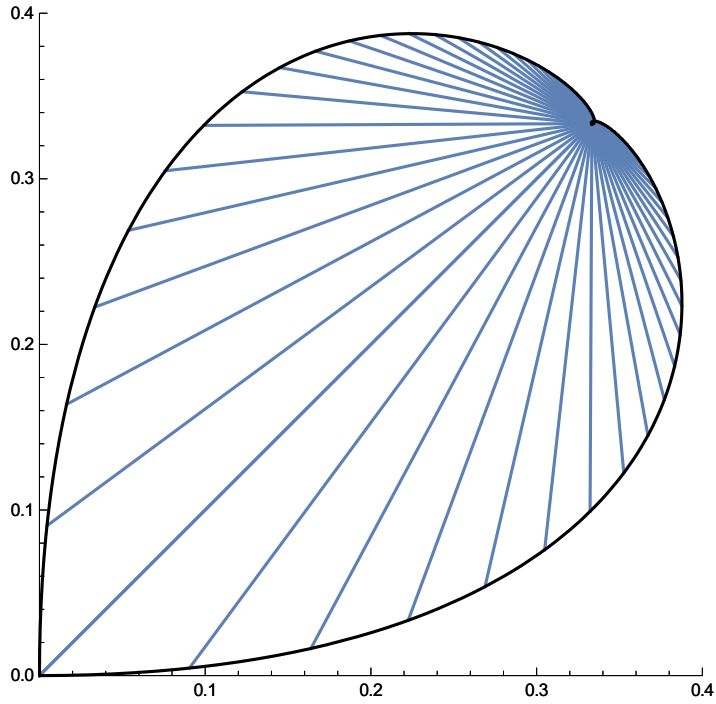}
\caption{Sketch of the parameter region for embeddable circulant
  Markov matrices with $d=3$. Note that the point
  $\bigl(\frac{1}{3},\frac{1}{3}\bigr)$ does not belong to this
  region, which is star-shaped relative to this point; see text for
  further explanations.\label{fig:circulant}}
\end{figure}

To see which Markov matrices of the form
$M(a,b) \, \defeq \, \one + a K_1 + b K_2 $ are realised by
Eq.~\eqref{eq:circulant}, we need to determine the image of
$\RR^{2}_{\geqslant 0}$ under the mapping
\[
   (\alpha, \beta)  \mapsto z(\alpha,\beta) 
   \defeq \bigl(x(\alpha,\beta), y(\alpha,\beta)\bigr).
\]
The necessary determinant condition 
$\det (M(a,b)) = 1 - 3 (a+b) + 3 (a^2 + a b + b^2) \in (0,1]$
from Proposition~\ref{prop:necessary}{\ts}\eqref{part:two}
implies the inequality
\[
    a + b - \myfrac{1}{3} \, < \,
    a^2 + a b + b^2 \, \leqslant \, a+b  \ts ,
\]
which is satisfied, but not sufficient in this case.
Figure~\ref{fig:circulant} illustrates the region that is covered,
where the special point belongs to $I^{}_{3} = M \bigl(
\frac{1}{3},\frac{1}{3} \bigr)$. The region
is bounded by the two curves
\begin{equation}\label{eq:boundary}
     \{ z ( \alpha ,0) : \alpha \geqslant 0 \}
     \quad \text{ and } \quad
     \{ z (0, \beta ) : \beta \geqslant 0 \} \ts ,
\end{equation}
which both start at $(0,0)$ and approach the limit point
$\bigl( \frac{1}{3}, \frac{1}{3} \bigr)$ without reaching it. The
entire region is filled with straight lines from the boundary points
towards this special point, as parametrised by
$\{ z (\alpha + t , t ) : t \geqslant 0 \}$ with $\alpha \geqslant 0$
(lower half) and $\{ z (t, \beta+t ) : t \geqslant 0 \}$ with
$\beta \geqslant 0$ (upper half). The special role of the limit point
also emerges from
\[
    \det \begin{pmatrix} x^{}_{\alpha} (\alpha, \beta) &
    y^{}_{\alpha} (\alpha, \beta) \\ x^{}_{\nts\beta} (\alpha, \beta) &
    y^{}_{\nts\beta} (\alpha, \beta) \end{pmatrix} \, = \,
    \ee^{-3 (\alpha + \beta)} \, 
    \xrightarrow{\, \alpha, \beta \to \infty \,} \, 0 ,
\]
which shows that the limit point is the (somewhat degenerate) envelope
\cite{Fer} of our family of curves, where the determinant criterion is
due to Leibniz and Taylor. We can summarise our derivation as follows.

\begin{theorem}
  For a general circulant Markov matrix\/ 
  $M (a,b) = \one + a K_1 + b K_2 \in \cM_3$,
  the following statements are equivalent.
  \begin{enumerate}\itemsep=2pt
  \item $M (a,b)$ is embeddable.
  \item $M (a,b)$ is circulant-embeddable.
  \item The parameter pair\/ $(a,b)$ lies in the closed region
    defined by the bounding curves from Eq.~\eqref{eq:boundary},
    excluding the point\/ $\bigl( \frac{1}{3}, \frac{1}{3}\bigr)$,
    which parametrises the non-embeddable matrix\/ $I_3$ from
    Remark~\textnormal{\ref{rem:I-mat}}.
  \end{enumerate}
\end{theorem}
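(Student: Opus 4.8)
The plan is to prove the three statements equivalent by running through the implications in the order that minimises effort, since most of the analytic content is already contained in the explicit parametrisation of $\exp(\alpha K_1 + \beta K_2)$ derived above. The implication $(2) \Rightarrow (1)$ is immediate, as a circulant embedding is in particular an embedding. For $(1) \Rightarrow (2)$ I would simply quote Corollary~\ref{coro:circ}, which states that every embeddable circulant Markov matrix is already circulant-embeddable. This collapses the theorem to the single equivalence $(2) \Leftrightarrow (3)$, which carries all the geometric weight; note that the determinant criterion alone is insufficient here, as observed before the theorem, so one genuinely has to identify the image of the exponential map.

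The first step towards $(2) \Leftrightarrow (3)$ is to describe the circulant generators explicitly. A circulant Markov generator in $\Mat(3,\RR)$ is precisely a matrix $\alpha K_1 + \beta K_2$ with $\alpha, \beta \geqslant 0$: the row-sum-zero condition fixes the diagonal, while the Metzler property forces the two off-diagonal coefficients $\alpha$ and $\beta$ to be non-negative. Consequently, $M(a,b)$ is circulant-embeddable if and only if $(a,b) = z(\alpha, \beta)$ for some $(\alpha,\beta) \in \RR^{2}_{\geqslant 0}$, and the whole problem reduces to identifying the image $z(\RR^{2}_{\geqslant 0})$ with the region in statement $(3)$.

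The key step is to pass to the coordinates $\gamma = \frac{3}{2}(\alpha+\beta)$ and $\delta = \frac{\sqrt{3}}{2}(\alpha-\beta)$ used above. This linear change of variables is a bijection carrying $\RR^{2}_{\geqslant 0}$ onto the cone $\{\gamma \geqslant \sqrt{3}\,\lvert\delta\rvert\}$, and in these variables the displacement of $z(\alpha,\beta)$ from the central point $\bigl(\frac{1}{3},\frac{1}{3}\bigr)$ becomes, after a fixed linear rescaling of the target, a logarithmic spiral of radius $\ee^{-\gamma}$ whose angle depends only on $\delta$. Freezing $\delta$ and letting $\gamma$ grow from its minimal cone value $\sqrt{3}\,\lvert\delta\rvert$ to $\infty$ then sweeps out a half-open straight segment from a boundary point towards, but never reaching, $\bigl(\frac{1}{3},\frac{1}{3}\bigr)$; these are exactly the segments $z(\alpha+t,t)$ and $z(t,\beta+t)$ recorded in the text. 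The two boundary families, $\delta \geqslant 0$ with $\gamma = \sqrt{3}\,\delta$ and $\delta \leqslant 0$ with $\gamma = -\sqrt{3}\,\delta$, are the images of the two faces of the cone and reproduce the curves of Eq.~\eqref{eq:boundary}. Surjectivity onto the region then follows because each fixed-$\delta$ segment begins on one of these curves, while $\gamma < \infty$ keeps $\bigl(\frac{1}{3},\frac{1}{3}\bigr)$ out of the image, consistent with its vanishing determinant.

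The main obstacle is making the informal phrase ``region bounded by the two curves'' precise, because the spiral overlaps itself once $\gamma$ is large. At radius $\ee^{-\gamma}$ the admissible angles fill an arc of length $2\gamma/\sqrt{3}$, which exceeds $2\pi$ as soon as $\gamma \geqslant \pi\sqrt{3}$, so a full punctured disk around $\bigl(\frac{1}{3},\frac{1}{3}\bigr)$ lies in the image and $z$ ceases to be injective on the cone. I would therefore define the region intrinsically as the union of the fixed-$\delta$ segments, equivalently as $z(\RR^{2}_{\geqslant 0})$ itself, check that, away from the central disk it eventually covers, its boundary is carried by the two curves of Eq.~\eqref{eq:boundary}, and invoke the envelope computation already recorded, where the Jacobian determinant $\ee^{-3(\alpha+\beta)}$ tends to $0$, to confirm that $\bigl(\frac{1}{3},\frac{1}{3}\bigr)$ is the degenerate limit point approached by the family but excluded from it. Together with the figure, this establishes $(2) \Leftrightarrow (3)$ and closes the chain of equivalences.
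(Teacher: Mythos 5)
Your proposal is correct and follows essentially the same route as the paper: $(1)\Rightarrow(2)$ via Corollary~\ref{coro:circ}, $(2)\Rightarrow(1)$ trivially, and $(2)\Leftrightarrow(3)$ by identifying the image of the circulant generators $\alpha K_1 + \beta K_2$, $\alpha,\beta\geqslant 0$, under the exponential map with the region swept out by the straight segments from the bounding curves of Eq.~\eqref{eq:boundary} towards $\bigl(\frac{1}{3},\frac{1}{3}\bigr)$. Your extra care about the self-overlap of the spiralling boundary curves, and the consequent reading of the ``region'' as the union of the fixed-$\delta$ segments, i.e.\ as $z(\RR^{2}_{\geqslant 0})$ itself, only makes explicit what the paper leaves implicit in its appeal to ``the above calculations'' and Figure~\ref{fig:circulant}.
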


\begin{proof}
  In view of our above calculations,
  $(3) \Longleftrightarrow (2) \Rightarrow (1)$ is clear, while
  $(1) \Rightarrow (2)$ follows from Corollary~\ref{coro:circ}.
\end{proof}

\begin{remark}
  Circulant Markov matrices with $d=3$ have previously been considered
  in some detail in \cite[Sec.~III]{Len}. This class is particularly
  interesting from the point of view of \emph{multiple} embeddings of
  a given Markov matrix. In smaller and smaller regions of the
  parameter space near $\bigl( \frac{1}{3}, \frac{1}{3}\bigr)$, 
  one can find matrices with an increasing number of
  distinct embeddings. We shall return to this point in
  Section~\ref{sec:c-input}. \exend
\end{remark}

\subsection{The cases $d\geqslant 4$}

For $d=4$, the most general circulant generator reads
\[
    Q \, = \, \begin{pmatrix} 
    -\alpha-\beta-\gamma & \alpha & \beta & \gamma \\
    \gamma & -\alpha-\beta-\gamma & \alpha & \beta  \\ 
    \beta & \gamma & -\alpha-\beta-\gamma & \alpha \\ 
     \alpha & \beta & \gamma & -\alpha-\beta-\gamma 
     \end{pmatrix}
     \quad \text{with} \quad
     \alpha, \beta, \gamma \geqslant 0 \ts .
\]
Its exponential is $\ee^Q = \one + x K_1 + y K_2 + z K_3 \eqdef M(x,y,z)$,
where $K_m \defeq P^m - \one$ with $P$ the permutation matrix of
$(1234)$ in analogy to the previous section, together with\footnote{The 
  validity of Eq.~\eqref{eq:4-form} can easily be verified by means of a
  standard computer algebra program, the details of which are left to
  the interested reader.}
\begin{equation}\label{eq:4-form}
\begin{split}
  x \, & = \, \myfrac{1}{2} \ee^{-(\alpha+\gamma)}
          \bigl( \sinh (\alpha + \gamma) + \ee^{-2 \beta} 
            \sin (\alpha - \gamma)\bigr) , \\
  y \, & = \, \myfrac{1}{2} \ee^{-(\alpha+\gamma)}
          \bigl( \cosh (\alpha + \gamma) - \ee^{-2 \beta} 
            \cos (\alpha - \gamma)\bigr) , \\
  z \, & = \, \myfrac{1}{2} \ee^{-(\alpha+\gamma)}
          \bigl( \sinh (\alpha + \gamma) - \ee^{-2 \beta} 
            \sin (\alpha - \gamma)\bigr) .
\end{split}            
\end{equation}
In the limit $\beta \to \infty$, one has $z=x$ and
$x+y=\frac{1}{2}$. Taking $\beta\to\infty$, the parametrisation
reduces to
\[
   \begin{pmatrix} x \\ y \\ z \end{pmatrix}
   \, = \, \myfrac{1}{4} \begin{pmatrix} 1 \\ 1 \\ 1
   \end{pmatrix} - \myfrac{\ee^{-2 (\alpha + \gamma)}}{4}
   \begin{pmatrix} 1 \\ -1 \\ 1 \end{pmatrix}
\]
and thus covers the line from $\bigl( 0,\frac{1}{2}, 0 \bigr)$ to
$\bigl( \frac{1}{4},\frac{1}{4},\frac{1}{4}\bigr)$, with
$M \bigl( \frac{1}{4},\frac{1}{4},\frac{1}{4}\bigr) = I^{}_{4}$.
Note that no
point on this line is ever reached with finite parameter values.
Otherwise, letting $\alpha \to \infty$ or $\gamma \to \infty$ each
means
$(x,y,z) \xrightarrow{\quad} \bigl( \frac{1}{4}, \frac{1}{4},
\frac{1}{4} \bigr)$, again without ever reaching this point.  In fact,
since
\[
    \det \begin{pmatrix} 
      x^{}_{\alpha} & y^{}_{\alpha} & z^{}_{\alpha} \\
      x^{}_{\beta} & y^{}_{\beta} & z^{}_{\beta} \\
      x^{}_{\gamma} & y^{}_{\gamma} & z^{}_{\gamma} 
    \end{pmatrix} \, = \, \ee^{-4 (\alpha + \beta + \gamma)}
    \, \xrightarrow{\, \beta \to \infty \,} \, 0 \ts ,
\]
we see that the above line is the (again degenerate)
envelope of our family of curves in $\RR^3$.

In general, one always has
$x+z = \frac{1}{2} \bigl( 1 - \ee^{-2 (\alpha + \gamma)} \bigr)$.
Setting $\alpha=\gamma=0$, one obtains the line
$\big\{\bigl(0,\frac{1}{2} ( 1 - \ee^{-2 \beta}),0 \bigr) : \beta
\geqslant 0 \big\}$, without reaching $(0,\frac{1}{2}, 0)$. One can
now check that the possible values of $(x,y,z)$ fill a region that is
bounded by three finite surface sheets of the form
$\{ (x,y,z): (\alpha, \beta, \gamma) \in \cP_i \}$ with
\begin{equation}\label{eq:circ-4}
  \cP^{}_{1} \, = \, \{ 0 \} \! \times \! \RR^{}_{\geqslant 0} 
      \! \times \! \RR^{}_{\geqslant 0} \, , \quad
  \cP^{}_{2} \, = \, \RR^{}_{\geqslant 0} \! \times \! \{ 0 \}
      \! \times \! \RR^{}_{\geqslant 0} \, , 
  \quad \text{and} \quad
  \cP^{}_{3} \, = \, \RR^{}_{\geqslant 0}  \! \times \!
      \RR^{}_{\geqslant 0} \! \times \! \{ 0 \} \, , 
\end{equation}
where the points on the `seam line' from
$\bigl( 0,\frac{1}{2}, 0 \bigr)$ to
$\bigl( \frac{1}{4},\frac{1}{4},\frac{1}{4}\bigr)$ are never reached.

\begin{figure}
\includegraphics[width=0.6\textwidth]{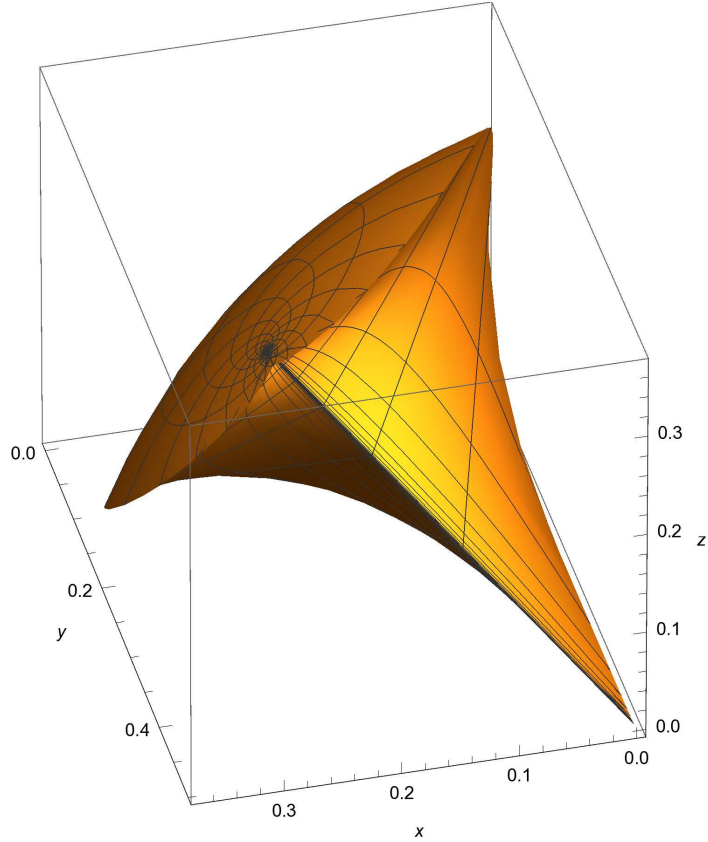}
\caption{Sketch of the parameter region for embeddable circulant
  Markov matrices with $d=4$. The sharp diagonal edge at the front is
  the envelope (or seam line) that does not belong to the region; see
  text for details. \label{fig:circ-4d}}
\end{figure}

We can sum up this analysis as follows, with an accompanying
illustration in Figure~\ref{fig:circ-4d}.

\begin{theorem}
  For a general circulant Markov matrix\/
  $M (x,y,z) = \one + x K_1 + y K_2 + z K_3$ in\/ $\cM_4$, 
  the following statements are equivalent.
\begin{enumerate}\itemsep=2pt
\item $M$ is embeddable.
\item $M$ is circulant-embeddable.
\item The parameters\/ $x,y,z$ lie in the closed region defined by the
  three surface sheets with the parameter regions from
  Eq.~\eqref{eq:circ-4}, without the points on the envelope.  \qed
\end{enumerate}
\end{theorem}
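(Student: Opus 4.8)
The plan is to follow the same three-step pattern as the preceding theorem for $d=3$, reducing the entire statement to the explicit exponential formula recorded in Eq.~\eqref{eq:4-form}. Writing $\Phi\colon\RR^{3}_{\geqslant 0}\to\RR^3$ for the map $(\alpha,\beta,\gamma)\mapsto(x,y,z)$ defined there, I would prove the cycle $(1)\Rightarrow(2)\Longleftrightarrow(3)\Rightarrow(1)$, with only the middle equivalence requiring genuine work.

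For $(2)\Longleftrightarrow(3)$, note that $M(x,y,z)$ is circulant-embeddable exactly when $M=\ee^{Q}$ for some circulant generator, and the most general such generator in $\cM_4$ is $Q=\alpha K_1+\beta K_2+\gamma K_3$ with $\alpha,\beta,\gamma\geqslant 0$, the Metzler condition for a circulant rate matrix. Eq.~\eqref{eq:4-form} then says that the admissible triples $(x,y,z)$ are precisely $\Phi(\RR^{3}_{\geqslant 0})$, so $(2)$ holds if and only if $(x,y,z)$ lies in this image. Assertion $(3)$ is exactly the geometric identification of $\Phi(\RR^{3}_{\geqslant 0})$ with the closed region bounded by the three surface sheets of Eq.~\eqref{eq:circ-4}, minus the envelope.

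The remaining two implications are short. $(2)\Rightarrow(1)$ is immediate, since a circulant embedding is in particular an embedding. For $(1)\Rightarrow(2)$ I would simply invoke Corollary~\ref{coro:circ}: the matrix $M$ is circulant by hypothesis, so every embedding forces a circulant one. This closes the cycle and, combined with the previous paragraph, yields the claimed equivalence.

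The hard part will be the geometric content of $(2)\Longleftrightarrow(3)$, namely showing that $\Phi(\RR^{3}_{\geqslant 0})$ is exactly the stated solid region. The key computational input is that the Jacobian determinant of $\Phi$ equals $\ee^{-4(\alpha+\beta+\gamma)}$, which is positive for all finite parameters; hence $\Phi$ is everywhere a local diffeomorphism and in particular an open map, so the open orthant maps into the interior of the region and the latter is genuinely three-dimensional (consistent with $E_4$ having full topological dimension $12$). I would then identify the images of the three coordinate faces $\cP_1,\cP_2,\cP_3$ with the three bounding sheets, and track Eq.~\eqref{eq:4-form} as any one parameter tends to infinity to see that the only limit points missed by finite parameters form the seam line from $\bigl(0,\tfrac12,0\bigr)$ to $\bigl(\tfrac14,\tfrac14,\tfrac14\bigr)$, the degenerate envelope flagged by the same Jacobian vanishing in the limit. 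The genuinely delicate step is to upgrade these local, boundary, and asymptotic facts to the global statement that $\Phi$ is a homeomorphism of $\RR^{3}_{\geqslant 0}$ onto the region of $(3)$ minus its envelope; as for $d=3$, I would settle this from the openness of $\Phi$ together with the boundary-sheet analysis already carried out above, rather than through any further explicit calculation.
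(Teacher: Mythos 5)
Your proposal is correct and follows essentially the same route as the paper: the explicit exponential formula \eqref{eq:4-form} together with the Jacobian and boundary-sheet analysis gives $(2)\Longleftrightarrow(3)$, the implication $(2)\Rightarrow(1)$ is trivial, and $(1)\Rightarrow(2)$ is exactly Corollary~\ref{coro:circ}, which is precisely how the paper argues (it records these calculations before the theorem and closes the statement with a \qed, mirroring the proof it gives in the $d=3$ case). Your extra remarks on openness of the parametrisation map make explicit what the paper leaves as ``one can now check,'' but do not constitute a different argument.
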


In general, consider $C_d$ and let $n_r $ be the order of the cyclic
subgroup that is multiplicatively generated by $r$, where $r\in C_d$
and $n_r | d$. Now, it is not hard to compute
\begin{equation}\label{eq:f-help}
    \ee^{\alpha P^r} = \, \sum_{\ell=0}^{d-1}
    f^{(d)}_{\ell} (\alpha) \, P^{r \ell} 
    \, = \sum_{m=0}^{n_r - 1}
    f^{(n_r)}_{m} (\alpha) \, P^{mr} ,
\end{equation}
where the second representation is a consequence of
Eq.~\eqref{eq:fun-sum}.  Define $K_j = P^j - \one$ for $j \in C_d$,
where $K_0=\nix$.  All $K_j$ are Markov generators, with relations
$K_i K_j = K_{i+j} - K_i - K_j$. Consequently, the algebra generated
by the $K_j$ is Abelian and of dimension $d \nts\nts - \! 1$.

Now, with
$\alpha^{}_{0} \defeq - \bigl(\alpha^{}_{1} + \cdots + \alpha^{}_{d-1}
\bigr)$, one obtains
\[
    \exp \biggl(\, \sum_{i=1}^{d-1} \alpha^{}_{i} K^{}_{i} \biggr)
    \, = \, \prod_{i=1}^{d-1} \ee^{\alpha^{}_{i} K^{}_{i}} \, = \,
    \ee^{\alpha^{}_{0}} \prod_{i=1}^{d-1} \ee^{\alpha^{}_{i} P^{\ts i}}
    \, = \, \ee^{\alpha^{}_{0}} \sum_{r=0}^{d-1} a^{}_r \ts P^{\ts r}
\]
with the coefficients
\begin{equation}\label{eq:A-def}
  a^{}_r  \, = \, a^{}_{r} \bigl( \alpha^{}_{1}, \ldots, \alpha^{}_{d-1}\bigr)
  \, = \sum_{\substack{m^{}_{1}, \ldots , m^{}_{d-1} = \ts 0 \\
      \sum_{i=1}^{d-1} i \ts m^{}_i \ts\ts \equiv \ts\ts r \, (d)}}^{d-1}\,
  \prod_{j=1}^{d-1} f^{(d)}_{m^{}_j} (\alpha^{}_j) \ts .
\end{equation}
With Fact~\ref{fact:cyclic-deco}{\ts}(1), one finds the relation
\[
     \sum_{r=0}^{d-1} a^{}_{r} \, = \, \prod_{i=1}^{d-1}
     \sum_{m^{}_i=0}^{d-1} f^{(d)}_{m^{}_i} (\alpha^{}_{i}) 
     \: = \, \prod_{i=1}^{d-1} \ee^{\alpha^{}_{i}} \, = \,
     \ee^{-\alpha^{}_{0}} ,
\]
which leads to the alternative expression
\[
    \exp \biggl(\, \sum_{i=1}^{d-1} \alpha^{}_{i} K^{}_{i} \biggr)
    \, = \, \one \ts\ts + \ts\ts \ee^{\alpha^{}_{0}}  \!
    \sum_{r=1}^{d-1} a^{}_{r} \ts K^{}_{r} \ts .
\]
This can be seen as the natural generalisation of our previous
calculations, in particular Eq.~\eqref{eq:2-circ} and the formulas for
$d=3$ and $d=4$. From here, one obtains general criteria for circulant
Markov matrices and their embeddability as follows.

\begin{theorem}
  For fixed\/ $d\geqslant 2$, the most general circulant Markov matrix
  is of the form\/
  $M (x^{}_{1}, \ldots , x^{}_{d-1}) = \one \ts\ts + \ts
  \sum_{r=1}^{d-1} x^{}_{r} \ts K^{}_{r}$, with all\/
  $x^{}_{r} \geqslant 0$ and\/
  $x^{}_{1} + \cdots + x^{}_{d-1} \leqslant 1$. It is embeddable if
  and only if there are non-negative numbers\/
  $\alpha^{}_{1}, \ldots, \alpha^{}_{d-1}$ such that\/
  $x^{}_{r} \ts\ts\ee^{\alpha^{}_{1} + \cdots + \alpha^{}_{d-1} } =
  a^{}_{r}$ holds for all\/ $1\leqslant r\leqslant d \! - \! 1$, with
  the coefficients\/ $a^{}_{r}$ from Eq.~\eqref{eq:A-def}.  \qed
\end{theorem}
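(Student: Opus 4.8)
The plan is to read the theorem off the explicit exponential identity derived immediately above, combined with the reduction supplied by Corollary~\ref{coro:circ}. First I would pin down the stated parametrisation of a general circulant Markov matrix. By the structure result for circulant Markov matrices, every such matrix is a convex combination $M = \sum_{r=0}^{d-1} c^{}_r P^{\ts r}$ with $c^{}_r \geqslant 0$ and $\sum_r c^{}_r = 1$. Substituting $P^{\ts r} = \one + K^{}_r$ (with $K^{}_0 = \nix$) and using $\sum_r c^{}_r = 1$ to collapse the $\one$-terms yields $M = \one + \sum_{r=1}^{d-1} c^{}_r K^{}_r$. Writing $x^{}_r = c^{}_r$ gives exactly the claimed form, where the constraints $x^{}_r \geqslant 0$ and $x^{}_1 + \cdots + x^{}_{d-1} = 1 - c^{}_0 \leqslant 1$ are inherited directly from the simplex of coefficients.

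Next I would reduce embeddability to circulant-embeddability and identify the admissible generators. By Corollary~\ref{coro:circ}, $M$ is embeddable if and only if it is circulant-embeddable, that is, $M = \ee^Q$ for some circulant generator $Q$. I would then observe that the circulant generators are precisely the matrices $Q = \sum_{r=1}^{d-1} \alpha^{}_r K^{}_r$ with all $\alpha^{}_r \geqslant 0$: writing an arbitrary circulant matrix as $\sum_r \beta^{}_r P^{\ts r}$, the zero-row-sum requirement forces $\sum_r \beta^{}_r = 0$, the Metzler (non-negative off-diagonal) requirement forces $\beta^{}_r \geqslant 0$ for $r \geqslant 1$, and the same substitution $P^{\ts r} = \one + K^{}_r$ then rewrites $Q$ in the stated form with $\alpha^{}_r = \beta^{}_r$.

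With $Q = \sum_{i=1}^{d-1} \alpha^{}_i K^{}_i$, the final displayed identity preceding the theorem gives $\ee^Q = \one + \ee^{\alpha^{}_0} \sum_{r=1}^{d-1} a^{}_r K^{}_r$, where $\alpha^{}_0 = -(\alpha^{}_1 + \cdots + \alpha^{}_{d-1})$ and the $a^{}_r$ are the coefficients from Eq.~\eqref{eq:A-def}. Since $\{ K^{}_1, \ldots, K^{}_{d-1} \}$ spans an algebra of dimension $d\nts -\nts 1$ and is therefore a basis of the space of zero-row-sum circulants, comparing coefficients in $M = \ee^Q$ is legitimate and yields $x^{}_r = \ee^{\alpha^{}_0} a^{}_r$ for every $r$, equivalently $x^{}_r \ts \ee^{\alpha^{}_1 + \cdots + \alpha^{}_{d-1}} = a^{}_r$. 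Running this equivalence in both directions finishes the argument: embeddability produces such non-negative $\alpha^{}_r$ via Corollary~\ref{coro:circ}, while conversely any such $\alpha^{}_r$ defines a bona fide circulant generator $Q = \sum_r \alpha^{}_r K^{}_r$ with $\ee^Q = M$.

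The only genuinely non-routine ingredient is the reduction from embeddability to circulant-embeddability, which rests on Corollary~\ref{coro:circ} (and ultimately on the deformation argument behind it); once that is granted, the statement is essentially a repackaging of the exponential identity already established. The sole point requiring care in the write-up is the linear independence of the $K^{}_r$, which is what validates the coefficient matching in the last step.
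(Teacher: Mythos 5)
Your proposal is correct and follows essentially the same route as the paper: the paper leaves the theorem with a \qed precisely because it is meant to follow from the convex-combination description of circulant Markov matrices, the reduction via Corollary~\ref{coro:circ}, and the exponential identity $\exp\bigl(\sum_i \alpha_i K_i\bigr) = \one + \ee^{\alpha_0}\sum_r a_r K_r$ derived immediately beforehand, with coefficient comparison in the basis $\{K_1,\ldots,K_{d-1}\}$. Your explicit verification that circulant generators are exactly the non-negative combinations $\sum_r \alpha_r K_r$, and that the $K_r$ are linearly independent, fills in the two details the paper treats as routine.
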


One can analyse the situation further from here in various ways. For
instance, one finds
$\det \bigl( \frac{\partial x_i}{\partial \alpha_{\nts j}}\bigr) =
\ee^{- d \ts (\alpha^{}_1 + \cdots + \alpha^{}_{d-1})} $, and one can
determine bounding surface sheets for the parameter region as
before. We leave details to the interested reader.

\begin{remark}
  The above analysis can also be carried out for general (Abelian)
  group{\ts}-based models in the sense of \cite[Sec.~7.3.2]{Steel}.  In
  particular, the embedding problem for the $C_2 \!\times\! C_2$ case,
  otherwise known as the Kimura 3{\ts}ST model \cite{Kimura}, has
  recently been treated in \cite{RL}. The authors demonstrate in
  Example 3.1 of their work that there exist Kimura 3{\ts}ST Markov
  matrices that are \emph{not} embeddable via a generator of this type.
  However, their example is embeddable with a $C_4$ circulant
  generator $Q$. Hence, this situation is in perfect analogy with our
  Example~\ref{ex:no-go} above, and the explanation of the phenomenon
  is the same.  \exend
\end{remark}

\section{Other classes for $d=3$}\label{sec:three}

While we know from Proposition~\ref{prop:necessary} that any
irreducible $M\in E_3$ must be  positive, this is no longer
the case within $\cE_3$, as can be seen from
\[
   \begin{pmatrix} 1-a & a & 0 \\
   b & 1-b & 0 \\ 0 & 0 & 1 \end{pmatrix}
   \begin{pmatrix} 1 & 0 & 0 \\
   0 & 1-c & c \\ 
   0 & d & 1-d \end{pmatrix}
   \, = \, \begin{pmatrix} 
   1-a & a \ts (1-c)& a \ts c \\
   b & (1-b)(1-c) & (1-b) \ts c \\
    0 & d & 1-d \end{pmatrix} .
\]
When $a,b,c,d \in (0,1)$, the two block matrices on the left are
reducible, but embeddable, while the matrix on the right is primitive,
because its square is  positive. However, the matrix itself
still contains a zero element, so cannot be embeddable due to
\mbox{Proposition~\ref{prop:necessary}{\ts}\eqref{part:four}}.  In
particular, even if the right-hand side can be written as $\ee^B$,
with $B$ determined via the BCH formula say, $B$ has vanishing row
sums, but at least one off-diagonal element will be negative.
Consequently, $E_3 \subsetneq \cE_3$, and the same phenomenon occurs
for all $d\geqslant 3$.  Using other canonical ways to embed $E_2$
into $E_3$, one sees that $\cE_3$ contains non-embeddable primitive
matrices with a single $0$ in one off-diagonal entry, for any of the
six possible choices.

Moreover, as can be seen from \cite[Ex.~14]{Davies}, there are
positive Markov matrices that satisfy the determinant condition, but
are not embeddable, which shows the higher complexity for
$d\geqslant 3$. Let us thus look at some subclasses that are related
to various matrix subalgebras of $\mathrm{Mat} (3,\RR)$.  Our results
can alternatively be derived via a normal form for Markov matrices,
which also underlies some of the analysis in \cite{Joh2,Car,CC}.  A
spectral characterisation of embeddable matrices with simple spectrum
is given in \cite[Cor.~1.3]{Joh2}, while those with spectrum
$\{ 1, \lambda, \lambda \}$ and $0<\lambda < 1$ are covered by
\cite[Prop.~1.4]{Joh2}. The case with $\lambda<0$ was later solved in
\cite{Car,CC}. Our approach is based on the algebraic structure of
(semi-)stable properties, which gives alternative insight.

Before we embark on the special cases, let us take a closer look at
the underlying algebraic structure for $d=3$. When the minimal polynomial of a
Markov matrix $M$ has degree $3$, we know that $M$ is cyclic, and we
can investigate $M = \ee^Q$ under the condition that the generator
satisfies $Q\in \alg (A)$ with $A=M-\one$, by
Lemma~\ref{lem:type}. When the degree is $1$, the polynomial must be
$z-1$, which is the trivial case $M=\one=\ee^{\nix}$.  So, we only
have to worry about the case where the minimal polynomial of $M$ is
$(z-1) (z-\lambda)$ for some $\lambda \in (-1,1)$.

\begin{lemma}\label{lem:alg-1D}
  Let\/ $M$ be an embeddable Markov matrix for\/ $d=3$ with minimal
  polynomial of degree\/ $2$. Then, $M$ is diagonalisable, with\/
  $\alg(A) = \alg(R)$ being one-dimensional, where\/ \mbox{$A=M-\one$}
  and\/ $R=\lim_{n\to\infty} M^n-\one$.  Moreover, any generator\/ $Q$
  with\/ $M=\ee^Q$ is diagonalisable, and one of the following cases
  applies:
\begin{enumerate}\itemsep=2pt
\item $\dim(\alg(Q))=1$, so\/ $Q^2 = -q \ts Q \ne \nix$ for some\/
    $q>0$, and\/ $\alg(Q)=\alg(A)=\alg(R)$. If\/ $1$ is
    a simple eigenvalue of\/ $M$, the generator\/ $Q$ must be of
    equal-input type.
\item $\dim(\alg(Q))=2$ and\/ $1$ is a simple eigenvalue 
   of\/ $M$, which implies that\/ $Q$ is simple, 
   with\/ $\sigma(Q) = \{0,\mu^{}_{\pm}\}$, where\/ 
   $\mu^{}_{\pm} = x \pm m \pi\ii$
    for some\/ $m\in\ZZ\setminus \{0\}$ with $x<0$. 
\end{enumerate}    
   In particular, $M$ is of equal-input type whenever\/ $1$ is 
   a simple eigenvalue of\/ $M$. 
\end{lemma}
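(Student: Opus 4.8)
The plan is to read off the spectral data forced by embeddability and then feed it into the two characterisations of equal-input generators, Fact~\ref{fact:one-dim} and Corollary~\ref{coro:eq-input-gen}. Since the minimal polynomial of $M$ has degree $2$ and factors as $(z-1)(z-\lambda)$ with distinct roots, $M$ is diagonalisable; as $M$ is real, $\lambda$ is real, and Proposition~\ref{prop:necessary}{\ts}(\ref{part:two}),(\ref{part:Elving}) force $\lambda\in(-1,1)$ with $\lambda\ne 0$. Writing $P^{}_1,P^{}_\lambda$ for the spectral projections of $M$, one has $M=P^{}_1+\lambda P^{}_\lambda$, hence $A=M-\one=(\lambda-1)P^{}_\lambda$; and since $\lvert\lambda\rvert<1$ gives $M^n=P^{}_1+\lambda^n P^{}_\lambda\to P^{}_1$, one finds $M^{}_\infty=P^{}_1$ and $R=M^{}_\infty-\one=-P^{}_\lambda$. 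Thus $A=(1-\lambda)R$, so $\alg(A)=\alg(R)$, and this algebra is one-dimensional because $A^2=(\lambda-1)A$ with $\lambda\ne 1$ while $A\ne\nix$ (as $M\ne\one$). This settles the first assertions.

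For the statements on $Q$, I would start from Fact~\ref{fact:diag}: since $M=\ee^Q$ is diagonalisable, so is $Q$. Its eigenvalues are doubly constrained: by Proposition~\ref{prop:necessary}{\ts}(\ref{part:one}) each eigenvalue $\mu$ of $Q$ satisfies $\ee^\mu\in\{1,\lambda\}$, and by Proposition~\ref{prop:G-asymp}{\ts}(1) each $\mu$ is either $0$ or has strictly negative real part. The first point is that any $\mu$ with $\ee^\mu=1$ lies on the imaginary axis, so must equal $0$. The remaining eigenvalues map to $\lambda$ and fill the $\lambda$-eigenspace of $M$, whose dimension equals the multiplicity of $\lambda$.

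The core of the proof is then a short case distinction on the multiplicity of the eigenvalue $1$. If $1$ is a double eigenvalue, $\lambda$ is simple, Proposition~\ref{prop:necessary}{\ts}(\ref{part:neg}) forces $\lambda>0$, and the lone eigenvalue of $Q$ over $\lambda$ must be real (reality of $Q$), so $\sigma(Q)=\{0,0,\ln\lambda\}$; hence $Q^2=-qQ$ with $q=-\ln\lambda>0$, giving $\dim(\alg(Q))=1$, which is case~(1). If instead $1$ is simple, $\lambda$ has multiplicity $2$, and the two eigenvalues of $Q$ over the $\lambda$-block are either both equal to the unique real value $\ln\lambda$ (possible only for $\lambda>0$) or, by reality of $Q$, a complex conjugate pair $\mu^{}_\pm=x\pm\ii y$. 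In the first situation $Q^2=-qQ$ again, $\dim(\alg(Q))=1$, and since $0$ is now a \emph{simple} eigenvalue, Corollary~\ref{coro:eq-input-gen} makes $Q$ an equal-input generator, matching the second half of case~(1). In the conjugate-pair situation, requiring $\ee^{\mu^{}_+}=\ee^{\mu^{}_-}=\lambda$ forces $\ee^{2\ii y}=1$, hence $y=m\pi$ with $m\in\ZZ\setminus\{0\}$ and $\mu^{}_\pm=x\pm m\pi\ii$ with $x<0$; the three eigenvalues $0,\mu^{}_+,\mu^{}_-$ are distinct, so $Q$ is cyclic and $\dim(\alg(Q))=2$, which is case~(2).

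Finally, the closing assertion is the cleanest step. When $1$ is a simple eigenvalue of $M$, the generator $A=M-\one$ (a Markov generator by Fact~\ref{fact:connection}) satisfies $A^2=-(1-\lambda)A$ with $1-\lambda>0$, and its eigenvalues $0,\lambda-1$ inherit the multiplicities of $1,\lambda$, so $0$ is a simple eigenvalue of $A$. Corollary~\ref{coro:eq-input-gen} then makes $A$ an equal-input generator, whence $M=\one+A$ is an equal-input Markov matrix by the defining relation~\eqref{eq:equal-input}. The main obstacle I anticipate is the conjugate-pair bookkeeping in case~(2): one must check carefully that reality of $Q$ together with $\ee^{\mu^{}_\pm}=\lambda$ pins the imaginary parts to integer multiples of $\pi$, correlated with the sign of $\lambda$ (even $m$ for $\lambda>0$, odd $m$ for $\lambda<0$, so that for $\lambda<0$ only case~(2) can occur), and that $\sigma(Q)$ is consistently closed under complex conjugation throughout.
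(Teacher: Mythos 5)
Your proof is correct, and it reaches the lemma by a route that differs from the paper's in a couple of instructive ways. For the opening claims, the paper does not compute spectral projections: it obtains $\dim(\alg(A))=1$ from Fact~\ref{fact:one-dim}, invokes Corollary~\ref{coro:M-asymp} to get $R$ with $R^2=-R$, and then applies Lemma~\ref{lem:QandA} ($R\in\alg(Q)\cap\alg(A)$) to conclude $R=c^{-1}A$ and hence $\alg(A)=\alg(R)$. Your explicit decomposition $M=P^{}_{1}+\lambda P^{}_{\lambda}$, giving $A=(\lambda-1)P^{}_{\lambda}$, $M^n\to P^{}_{1}$ and $R=-P^{}_{\lambda}$, proves the same statements more elementarily and does not need $Q$ at all for this part. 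For the dichotomy, the paper distinguishes cases by the structure of $Q$ (minimal polynomial $z(z+q)$ versus $Q$ simple), whereas you distinguish by the multiplicity of the eigenvalue $1$ of $M$ and then reconstruct $\sigma(Q)$ from the spectral mapping theorem, reality of $Q$, and Proposition~\ref{prop:G-asymp}{\ts}(1); the two organisations are equivalent, but yours makes it immediate why case~(2) forces $1$ to be simple. Your uniform treatment of the closing claim --- $0$ is simple in $\sigma(A)$, so $A$ is equal-input by Corollary~\ref{coro:eq-input-gen}, and then $M=\one+A$ is equal-input --- is also slightly cleaner than the paper's, which distributes that argument over the two cases. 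Both proofs share the same key external inputs: Fact~\ref{fact:diag}, Corollary~\ref{coro:eq-input-gen}, Elving's theorem and the spectral mapping theorem.

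One small item to patch: case~(1) of the statement also asserts $\alg(Q)=\alg(A)=\alg(R)$, which you never verify --- you only establish $Q^2=-q\ts Q$ and $\dim(\alg(Q))=1$. It follows in one line from what you already have: $Q^2=-q\ts Q$ gives $\ee^Q=\one+\frac{1-\ee^{-q}}{q}\ts Q$, so $A=\ee^Q-\one$ is a non-zero scalar multiple of $Q$, whence $\alg(Q)=\langle Q\rangle^{}_{\RR}=\langle A\rangle^{}_{\RR}=\alg(A)=\alg(R)$. (The paper gets the same equality by noting $A\in\alg(Q)$, so that $A$ and $Q$ are both non-trivial multiples of $R$.) With that line added, your argument is complete.
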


\begin{proof}
  Assume $M$ as stated, with eigenvalues $1$ and $\lambda$.  This
  means that $A=M-\one$ has minimal polynomial $z (z + c)$ with
  $c = 1-\lambda > 0$, hence eigenvalues $0$ and $-c$, one of them
  with multiplicity $2$. In either case, both $A$ and $M$ are
  diagonalisable.

  Clearly, $A$ satisfies $A^2 = -c A$, and $\dim(\alg(A))=1$ by
  Fact~\ref{fact:one-dim}.  Now, assume that $M=\ee^Q$, with $Q$ a
  generator, and define $R=\lim_{t\to\infty}\ee^{t\ts Q}-\one$ as
  before, where $R^2=-R$ from Corollary~\ref{coro:M-asymp}.  By
  Lemma~\ref{lem:QandA}, we have $R\in\alg (Q) \cap \ts \alg (A)$,
  which implies $R = c^{-1} A$. In particular, $\alg(A) = \alg(R)$ is
  one-dimensional, while diagonalisability of $Q$ follows from that of
  $M$ in conjunction with Fact~\ref{fact:diag}.

  As $Q\ne \nix$, we have $\dim(\alg(Q))\in\{1,2\}$.  But $Q$
  diagonalisable means it is either simple or satisfies
  $Q^2 = -q \ts Q$ for some $q>0$.  In the latter case, since
  $A\in \alg(Q)$, the generators $A$ and $Q$ both are non-trivial
  multiples of $R$, which gives the equalities of the one-dimensional
  algebras.  If $1\in\sigma (M)$ is simple, the equal-input property
  of $Q$, which has $0$ as a simple eigenvalue, follows from
  Corollary~\ref{coro:eq-input-gen}. The equal-input property is then
  inherited by $M$.

  When $Q$ is simple, $Q^2$ and $Q$ are linearly independent, so
  $\dim(\alg(Q))=2$. Then, we have
  $\sigma(M)=\{1,\ee^{\mu^{}_{+}}, \ee^{\mu^{}_{-}}\}$ with
  $\mu^{}_{_\pm} = x \pm \ii y$. In this situation, we must have
  $\ee^{\mu^{}_{+}} = \ee^{\mu^{}_{-}}$, which is only consistent if
  $x<0$, as $\ee^x<1$ by Elving's theorem, and
  $\ee^{\ii y} = \ee^{-\ii y}=\pm 1$.  Since $Q$ is simple, this gives
  $y=m \pi$ with $m\in\ZZ\setminus\{0\}$.

  As $0\in\sigma (A)$ must be simple in the last case, $A$ is an
  equal-input generator by Corollary~\ref{coro:eq-input-gen} again,
  which also completes the argument for the final claim.
\end{proof}

Let us now turn our attention to some special matrix classes for
$d=3$.

\subsection{Symmetric matrices for $d=3$}

The approximation approach fails for symmetric Markov matrices with
negative eigenvalue as all nearby embeddable, symmetric matrices have
some negative eigenvalue with even algebraic multiplicity by
Proposition~\ref{prop:necessary}{\ts}(4). As these matrices are also
diagonalisable, they can never be cyclic. However, as mentioned after
Definition~\ref{def:stable}, being symmetric with non-negative
spectrum has the required approximation property, whence an embeddable,
symmetric Markov matrix $M$ with $\sigma (M) \subset (0,1]$ can always
be written as $\ee^Q$ with $Q$ symmetric, by
Proposition~\ref{prop:extra}. So, let us look at the general symmetric
(and then automatically doubly stochastic) Markov generator
\begin{equation}\label{eq:Q-sym}
   Q \, = \, Q^T \, = \, \begin{pmatrix} 
   -\alpha -\beta & \alpha & \beta \\
   \alpha & -\alpha-\gamma & \gamma \\ 
   \beta & \gamma & -\beta-\gamma \end{pmatrix}
\end{equation}
with $\alpha, \beta, \gamma \geqslant 0$.
One has $\sigma (Q) = \{ 0, {-\varDelta + s}, 
{-\varDelta - s} \} \subset \RR$ with
\begin{equation}\label{eq:del-s}
    \varDelta \, = \, \alpha + \beta + \gamma
    \quad \text{and} \quad
    s \, = \, \sqrt{\alpha^2 + \beta^2 + \gamma^2 -
    \alpha \beta  - \beta \gamma -\gamma \alpha \,} \ts ,
\end{equation}
where
$ \alpha \beta + \beta \gamma + \gamma \alpha \leqslant \alpha^2 +
\beta^2 + \gamma^2$ by Cauchy--Schwarz and
$\alpha^2 + \beta^2 + \gamma^2 \leqslant (\alpha + \beta + \gamma)^2$
due to $\alpha, \beta , \gamma\geqslant 0$, hence
$0 \leqslant s \leqslant \varDelta$, and $\ee^{\sigma(Q)}\subset (0,1]$. 
In particular, any symmetric
generator is negative semi-definite. Moreover, $s=0$ is only possible
for $\alpha = \beta = \gamma$, which means $\varDelta = 3\ts \alpha$
and results in
$\ee^Q = \ee^{-\varDelta J_3} = \one + (1-\ee^{-\varDelta}) J_3$,
which is to be compared with Eq.~\eqref{eq:J-exp}.

The rather simple structure of the eigenvalues follows easily from the
observation that $Q + (\alpha + \beta + \gamma) \one$ is an
anti-circulant matrix; see \cite[p.~156]{PJD} for more on this matrix
class. Also, eigenvectors can be computed in closed form and read
\[
      u^{}_{0} \, = \, \begin{pmatrix} 1 \\ 1 \\ 1 \end{pmatrix}
      \quad \text{and} \quad
      u^{}_{\pm} \, = \, \begin{pmatrix}
      -s^2 + \gamma^2 - \alpha \beta \pm (\alpha + \beta) s \\
      \alpha^2 - \beta \gamma \mp \alpha s \\
      \beta^2 - \alpha \gamma \mp \beta s
      \end{pmatrix} ,
\] 
which are mutually orthogonal. In Dirac notation, let
$|u^{}_{0} \rangle$ and $|u^{}_{\pm}\rangle$ be the normalised
eigenvectors derived from this, which give the three projectors
\begin{equation}\label{eq:proj}
    |u^{}_{0} \rangle \langle u^{}_{0}| \, = \, I^{}_3 
    \, = \, \one + J^{}_3
    \quad \text{and} \quad
    |u^{}_{\pm}\rangle \langle u^{}_{\pm}| \, = \,
    -\myfrac{1}{2} \ts J^{}_{3} \mp \myfrac{\varDelta}{2 s} 
    \ts I^{}_{3} \pm \myfrac{1}{2 s} \ts
    \begin{pmatrix} \gamma & \alpha & \beta \\
    \alpha & \beta & \gamma \\ \beta & \gamma & \alpha
    \end{pmatrix} ,
\end{equation}
with $I_3$ as in Remark~\ref{rem:I-mat}, as can be checked by an
explicit computation.

\begin{lemma}\label{lem:sym}
  If\/ $Q$ is the generator from Eq.~\eqref{eq:Q-sym}, its exponential
  is given by
\[
     \ee^Q \, = \, \Bigl( 1 -  \frac{\sinh (s)}{s} \ts
     \varDelta \ts \ee^{-\varDelta}\Bigr) I^{}_3 -  \cosh (s) \ts
     \ee^{-\varDelta} \ts J^{}_3 + \frac{\sinh (s)}{s} \ts\ts
     \ee^{-\varDelta} 
     \begin{pmatrix} \gamma & \alpha & \beta \\
    \alpha & \beta & \gamma \\ \beta & \gamma & \alpha
    \end{pmatrix} ,
\]   
   which correctly covers the limiting case\/ 
   $s \, \text{\footnotesize $\searrow$} \, 0$, where\/
   $\alpha = \beta = \gamma$ and\/
   $\ts \ee^Q = \one + \bigl(1 - \ee^{-\varDelta} \bigr)\ts J_3$.
\end{lemma}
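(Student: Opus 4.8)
The plan is to use that $Q$ is real symmetric, hence orthogonally diagonalisable, so that its exponential has the spectral representation $\ee^Q = \sum_{\lambda\in\sigma(Q)} \ee^{\lambda}\ts P^{}_{\lambda}$, where $P^{}_{\lambda}$ denotes the orthogonal projector onto the $\lambda$-eigenspace. The three eigenvalues $0$ and $-\varDelta\pm s$ have already been determined above, and the corresponding projectors $|u^{}_{0}\rangle\langle u^{}_{0}|$ and $|u^{}_{\pm}\rangle\langle u^{}_{\pm}|$ are given explicitly in Eq.~\eqref{eq:proj}. Consequently, the computation of $\ee^Q$ reduces to forming a single linear combination of three already known matrices, and no further spectral work is needed.

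Concretely, I would start from
\[
   \ee^Q \, = \, |u^{}_{0}\rangle\langle u^{}_{0}|
   + \ee^{-\varDelta + s}\ts |u^{}_{+}\rangle\langle u^{}_{+}|
   + \ee^{-\varDelta - s}\ts |u^{}_{-}\rangle\langle u^{}_{-}| ,
\]
pairing $u^{}_{\pm}$ with the eigenvalue $-\varDelta\pm s$, and insert the three expressions from Eq.~\eqref{eq:proj}. Writing $S$ for the anti-circulant matrix $\left(\begin{smallmatrix}\gamma & \alpha & \beta\\ \alpha & \beta & \gamma\\ \beta & \gamma & \alpha\end{smallmatrix}\right)$, the projectors are $I^{}_3$ and $-\tfrac{1}{2}J^{}_3 \mp \tfrac{\varDelta}{2s}I^{}_3 \pm \tfrac{1}{2s}S$, so after substitution I collect the coefficients of $I^{}_3$, $J^{}_3$ and $S$ separately. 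The two exponentials $\ee^{-\varDelta\pm s}$ combine, via $\ee^{s}+\ee^{-s}=2\cosh(s)$ and $\ee^{s}-\ee^{-s}=2\sinh(s)$, into precisely the prefactors $1 - \tfrac{\sinh(s)}{s}\varDelta\ee^{-\varDelta}$, $-\cosh(s)\ee^{-\varDelta}$ and $\tfrac{\sinh(s)}{s}\ee^{-\varDelta}$ claimed in the statement; this is the only algebraic step, and it is entirely routine.

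For the degenerate case $s\,\text{\footnotesize$\searrow$}\,0$, I would invoke the elementary limits $\tfrac{\sinh(s)}{s}\to 1$ and $\cosh(s)\to 1$, together with the observation that $\alpha=\beta=\gamma$ forces $S=\varDelta\ts I^{}_3$, since then every entry of $S$ equals $\alpha$ and $\varDelta = 3\ts\alpha$. The two $I^{}_3$-contributions then cancel, leaving $I^{}_3 - \ee^{-\varDelta}J^{}_3 = \one + \bigl(1-\ee^{-\varDelta}\bigr)J^{}_3$ via $I^{}_3 = \one + J^{}_3$, as asserted. Since everything rests on the projector identities of Eq.~\eqref{eq:proj}, which are taken as given, there is no genuine obstacle here; the only point requiring mild care is keeping the sign assignment of $u^{}_{\pm}$ consistent with the eigenvalues $-\varDelta\pm s$, which the bookkeeping above fixes once and for all.
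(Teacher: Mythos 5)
Your proposal is correct and takes essentially the same approach as the paper's own proof: both expand $\ee^Q$ spectrally as $|u^{}_{0}\rangle\langle u^{}_{0}| + \ee^{-\varDelta+s}\ts|u^{}_{+}\rangle\langle u^{}_{+}| + \ee^{-\varDelta-s}\ts|u^{}_{-}\rangle\langle u^{}_{-}|$ using the projectors of Eq.~\eqref{eq:proj}, and settle the degenerate case via $\lim_{s\to 0}\frac{\sinh(s)}{s}=1$ together with the observation that $s=0$ forces $\alpha=\beta=\gamma$ and $\varDelta=3\ts\alpha$. The only difference is that you write out the coefficient collection (via the hyperbolic identities) that the paper compresses into ``a simple calculation''.
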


\begin{proof}
  Given $Q$, which is diagonalisable, the spectrum of $\ee^Q$ is
  $\bigl\{ 1, \ee^{-\varDelta + s}, \ee^{-\varDelta - s}
  \bigr\}$. Employing the projectors from Eq.~\eqref{eq:proj}, one
  then obtains
\[
     \ee^Q \, = \,  |u^{}_{0} \rangle \langle u^{}_{0}| \, + \,
     |u^{}_{+}\rangle \ee^{-\varDelta + s}\langle u^{}_{+}|  \, + \,
     |u^{}_{-}\rangle \ee^{-\varDelta - s}\langle u^{}_{-}| \ts ,
\]    
which leads to the formula by a simple calculation. The claim on the
limit follows from $\lim_{s\to 0} \frac{\sinh (s)}{s} = 1$ and the
fact that $s=0$ means $\alpha = \beta = \gamma$ and
$\varDelta = 3 \ts \alpha$.
\end{proof}

\begin{remark}\label{rem:not-Lie}
  The symmetric matrices do not form a matrix algebra, because
  $(AB)^T = B^T \! A^T$ and $\mathrm{Mat} (3,\RR)$ contains symmetric
  matrices that do not commute. Likewise, symmetric generators do not
  form a Lie algebra, because $[A,B]^T = - \bigl[A^T, B^T \ts \bigr]$.
  Consequently, the corresponding symmetric Markov matrices of the
  form $\ee^Q$ are not closed under matrix multiplication, as
  discusses in \cite{Jeremy}. However, symmetric matrices \emph{do}
  form a Jordan algebra, with
  $\{ A,B \}^T = \{ A, B\} = \frac{1}{2} (AB + BA)$. This observation
  turned out to be a helpful property for identifying (semi-)stable
  matrix classes.  \exend
\end{remark}

Let us now consider a general symmetric Markov matrix, written as
\begin{equation}\label{eq:M-sym}
    M \, = \, \begin{pmatrix} 1-a-b & a & b \\ a & 1-a-c & c \\
    b & c & 1-b-c \end{pmatrix} 
\end{equation}
with $a,b,c\in[0,1]$ as well as $a+b, a+c, b+c \in [0,1]$.  Note that
any such matrix is also doubly stochastic, and that
$M + (a+b+c-1) \one$ is again anti-circulant.

Clearly, we can have $a=b=c=0$, which is $\one = \ee^{\nix}$.  More
generally, let us assume that $M$ is embeddable. If $a=0$,
Proposition~\ref{prop:necessary}{\ts}\eqref{part:four} implies that
$b\ts c=0$, and likewise for $b=0$ or $c=0$. So, we have either 
$a\ts b\ts c>0$ or $a\ts b + b\ts c + c\ts a = 0$.  If $a=1$, we must
have $b=0$ and hence also $c=0$, which then fails to be embeddable 
by Theorem~\ref{thm:Kendall}, and analogously for $b=1$ or $c=1$. In
fact, whenever two of the numbers are zero, Theorem~\ref{thm:Kendall}
implies that the third lies in the interval
$\bigl[ 0, \frac{1}{2} \bigr)$, which gives us the following result,
where the second claim can be seen constructively from another
application of Eq.~\eqref{eq:log-calc}.

\begin{fact}\label{fact:sym-zero}
  When a symmetric Markov matrix of the form\/ \eqref{eq:M-sym} fails
  to be positive, it is embeddable if and only if\/
  $a\ts b + b\ts c + c\ts a =0$ together with\/
  $0 \leqslant \max (a,b,c) < \frac{1}{2}$.  \qed
\end{fact}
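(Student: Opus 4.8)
The plan is to prove both directions by reducing to the two-dimensional situation of Theorem~\ref{thm:Kendall}, after invoking Proposition~\ref{prop:necessary} to pin down the support pattern of the off-diagonal entries $a,b,c$.

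For the forward direction, assume $M$ embeddable and not positive. Since $M$ fails to be positive, Proposition~\ref{prop:necessary}{\ts}\eqref{part:four} forces $M$ to be reducible, but the crucial tool is the transitivity of positive entries, Proposition~\ref{prop:necessary}{\ts}(6). Applied to the symmetric pattern, it excludes having exactly two of $a,b,c$ positive: if, say, $M_{12}=a>0$ and $M_{23}=c>0$, then $M_{13}=b>0$ as well, and similarly for the other two pairings. Taking the index $k=i$ in that same property shows that any positive off-diagonal entry in a row forces the corresponding diagonal entry to be positive, so no diagonal entry of an embeddable $M$ can vanish. Hence \emph{not positive} means exactly that some off-diagonal entry is zero, and transitivity then leaves only the possibilities that none or precisely one of $a,b,c$ is positive. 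In either case $a\ts b=b\ts c=c\ts a=0$, that is, $a\ts b+b\ts c+c\ts a=0$.

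It remains to locate the surviving parameter. If all three vanish, $M=\one$ and $\max(a,b,c)=0<\frac{1}{2}$. If exactly one is positive, say $a>0$ with $b=c=0$, then $M$ is block diagonal with the symmetric $2\times2$ block $\left(\begin{smallmatrix} 1-a & a \\ a & 1-a\end{smallmatrix}\right)$ and a trivial $1\times1$ block. By Theorem~\ref{thm:Kendall}, the block is embeddable precisely when its determinant $1-2a$ is positive, i.e.\ $a<\frac{1}{2}$, giving $\max(a,b,c)<\frac{1}{2}$ and completing the forward implication.

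For the converse, start from $a\ts b+b\ts c+c\ts a=0$ with $a,b,c\geqslant0$; since the three summands are nonnegative, at most one of $a,b,c$ is positive. If all vanish, $M=\one=\ee^{\nix}$ is embeddable. If exactly one is positive, say $a$, with $a<\frac{1}{2}$, the same block decomposition applies, and Theorem~\ref{thm:Kendall} embeds the $2\times2$ block via the explicit generator of Eq.~\eqref{eq:log-calc}; its direct sum with $\nix$ on the trivial block is a Markov generator for $M$. Such an $M$ automatically has a zero off-diagonal entry and so fails to be positive, consistent with the standing hypothesis. The only genuine subtlety is the combinatorial step using Proposition~\ref{prop:necessary}{\ts}(6) to rule out the ``exactly two positive'' configuration and the vanishing of a diagonal entry; once the support is reduced to at most a single positive off-diagonal pair, everything follows from the two-dimensional case and its constructive logarithm.
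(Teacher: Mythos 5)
Your proof follows essentially the same route as the paper's (which is the paragraph preceding Fact~\ref{fact:sym-zero}): use Proposition~\ref{prop:necessary} to force $a\ts b + b\ts c + c\ts a = 0$, then settle the single surviving parameter through Theorem~\ref{thm:Kendall}, with sufficiency obtained constructively from Eq.~\eqref{eq:log-calc} extended by a zero block. Your use of part~(6) of Proposition~\ref{prop:necessary} (transitivity, including the choice $k=i$ to rule out vanishing diagonal entries) is a slightly more explicit organisation of what the paper extracts from part~(\ref{part:four}); both correctly reduce the support pattern to at most one positive off-diagonal parameter, and your converse direction (direct sum of the generator from Eq.~\eqref{eq:log-calc} with the trivial $1\times 1$ block) is exactly the paper's construction.

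The one step you assert without justification lies in the forward direction: from ``$M$ is embeddable'' you pass directly to applying Theorem~\ref{thm:Kendall} to the $2\times 2$ block, which presupposes that the block itself is embeddable as a two-dimensional Markov matrix. That implication is true but not free: one must know that any generator $Q$ with $\ee^Q = M$ inherits the block structure. This does follow from the zero-pattern facts behind parts (5) and (6) of Proposition~\ref{prop:necessary} --- if $Q_{ij}>0$ for indices $i,j$ in different blocks, then $(\ee^Q)_{ij}>0$, contradicting $M_{ij}=0$; this is the content of the Norris theorem cited there --- but you should say so. Alternatively, and more simply, bypass the block altogether: Proposition~\ref{prop:necessary}{\ts}(\ref{part:two}) applied to $M$ itself gives $\det(M) = 1\cdot(1-2a) > 0$, hence $a < \frac{1}{2}$ directly, which is what the paper intends. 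With either repair your argument is complete and coincides with the paper's.
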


The general case can be stated as follows.

\begin{theorem}
  Let\/ $M \in \cM_3$ be a symmetric Markov matrix as given in
  \eqref{eq:M-sym}, with\/ parameters\/ $a,b,c\geqslant 0$ and\/
  $\max (a+b,a+c,b+c) \leqslant 1$. Then, the following statements are
  equivalent.
\begin{enumerate}\itemsep=2pt
\item $M$ is embeddable and has positive spectrum.
\item $M$ is embeddable with a symmetric generator.
\item There are non-negative numbers\/ $\alpha, \beta, \gamma$
   such that
\[
   \begin{pmatrix} a \\ b \\ c \end{pmatrix} \, = \,
   \myfrac{1}{3} \Bigl( 1 -  \frac{\sinh (s)}{s} \ts \varDelta \ts\ts
   \ee^{-\varDelta} - \cosh(s) \ts\ts \ee^{-\varDelta} \Bigr)
   \begin{pmatrix} 1 \\ 1 \\ 1 \end{pmatrix}  +
   \frac{\sinh (s)}{s} \ts\ts \ee^{-\varDelta}
   \begin{pmatrix} \alpha \\ \beta \\ \gamma
   \end{pmatrix},
\]   
    with\/ $\varDelta$ and\/ $s$ as in Eq.~\eqref{eq:del-s}.
\end{enumerate}
Futhermore, there are embeddable, symmetric Markov matrices with
negative eigenvalues, but none of them can be written as\/
$\ee^Q$ with\/ $Q$ symmetric.
\end{theorem}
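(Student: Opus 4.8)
The plan is to establish the chain $(1)\Leftrightarrow(2)\Leftrightarrow(3)$ and then prove the final negative assertion separately. For the equivalences, the bulk of the work has already been assembled: Lemma~\ref{lem:sym} gives the explicit form of $\ee^Q$ for a symmetric generator $Q$ as in Eq.~\eqref{eq:Q-sym}, and comparing its off-diagonal entries with the parametrisation \eqref{eq:M-sym} yields precisely the three scalar equations displayed in statement~(3). Thus $(2)\Leftrightarrow(3)$ is essentially a matter of reading off coordinates from Lemma~\ref{lem:sym} and recording that $\alpha,\beta,\gamma\geqslant 0$ is exactly the generator condition. For $(2)\Rightarrow(1)$, note that a symmetric generator has $\sigma(Q)\subset\RR$ with $\sigma(Q)=\{0,-\varDelta+s,-\varDelta-s\}$, all $\leqslant 0$, so $\sigma(\ee^Q)\subset(0,1]$ by the spectral mapping theorem; hence $M$ is embeddable with positive spectrum.

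The substantive direction is $(1)\Rightarrow(2)$. Here I would invoke the discussion following Definition~\ref{def:stable}: being symmetric with non-negative spectrum is a \emph{stable} property, so by Proposition~\ref{prop:extra} any embeddable symmetric $M$ with $\sigma(M)\subset(0,1]$ admits an embedding $M=\ee^Q$ with $Q$ symmetric. The one point needing care is the stability claim itself, namely Condition~(3) of Definition~\ref{def:stable}: every neighbourhood of such an $M$ contains cyclic, symmetric embeddable matrices with non-negative spectrum. This is the deformation argument flagged in Remark~\ref{rem:I-mat}, using star-shapedness of the relevant subset of $E_d$ with respect to $I_3$; I would either cite that remark or sketch that one perturbs the (necessarily real, non-negative) eigenvalues to make them simple while staying symmetric and inside $E_3$, which is possible because $E_3$ has full topological dimension.

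The final negative statement is where the genuine obstacle lies, and it is a purely spectral obstruction. The claim is that embeddable symmetric matrices with a negative eigenvalue exist, yet none can be written as $\ee^Q$ with $Q$ symmetric. For existence, I would point to the matrix $M$ of Example~\ref{ex:no-go}, which is symmetric, embeddable, and has the negative eigenvalue $\lambda=-\ee^{-\pi\sqrt 3}$ of algebraic multiplicity~$2$ (its even multiplicity being forced by Proposition~\ref{prop:necessary}\,(\ref{part:neg})). For the non-existence, the key step is the same observation already made in Example~\ref{ex:no-go}: a symmetric generator $Q$ satisfies $\sigma(Q)\subset\RR$, so $\sigma(\ee^Q)=\ee^{\sigma(Q)}\subset(0,\infty)$ and $\ee^Q$ can never have a negative eigenvalue. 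Hence a symmetric $M$ with $\lambda<0$ in its spectrum cannot equal $\ee^Q$ for any symmetric $Q$, even though it is embeddable via a non-symmetric generator. The main subtlety to get right is thus the logical separation between \emph{embeddability} (which may use a non-symmetric $Q$) and embeddability \emph{with a symmetric generator}; the negative result isolates exactly the failure of symmetry to be a \emph{stable}, as opposed to merely semi-stable, property.
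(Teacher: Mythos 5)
Your proposal is correct and follows essentially the same route as the paper's own proof: it deduces $(1)\Rightarrow(2)$ from the approximation argument behind Proposition~\ref{prop:extra} (stability of ``symmetric with non-negative spectrum'' via the deformation argument of Remark~\ref{rem:I-mat}), reads off $(2)\Leftrightarrow(3)$ from Lemma~\ref{lem:sym}, and settles the final claim using the matrix of Example~\ref{ex:no-go} together with $\sigma(\ee^Q)=\ee^{\sigma(Q)}\subset(0,1]$ for symmetric $Q$. Your write-up is, if anything, more explicit than the paper's about where the stability hypothesis is needed and about the spectral obstruction, but the substance is identical.
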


\begin{proof}
  (1) $\Rightarrow$ (2) follows from the argument used in the proof of
  Proposition~\ref{prop:extra}, because an embeddable, symmetric
  Markov matrix with positive spectrum can be approximated by cyclic,
  embeddable ones, while (2) $\Rightarrow$ (1) is clear. Now,
  Lemma~\ref{lem:sym} gives the general form of $\ee^Q$ with $Q$ a
  symmetric generator, and (3) is the resulting condition on the
  parameters.

  The final claim follows from the matrix $M$ of Example~\ref{ex:no-go}
  together with the fact that $\sigma (\ee^Q) = \ee^{\sigma(Q)} \subset
  (0,1]$ for any symmetric generator $Q$.
\end{proof}

Let us comment a little on the situation at hand.  If $M$ from
Eq.~\eqref{eq:M-sym} is embeddable with a symmetric rate
matrix, so $M=\ee^Q$ with $Q$ as in
Eq.~\eqref{eq:Q-sym}, one has
\begin{equation}\label{eq:M-spec}
   \sigma (M) \, = \, \bigl\{ 1, \ee^{-\varDelta+s},
     \ee^{-\varDelta-s} \bigr\} \, \subset \, ( 0, 1] 
\end{equation}
with $\varDelta$ and $s$ as in Eq.~\eqref{eq:del-s}.  We can now
compare the coefficients of the characteristic polynomial of $M$ with
the corresponding expressions from Vieta's formula. This gives three
necessary conditions for embeddability as follows.  First, in line with
Proposition~\ref{prop:necessary}{\ts}(2), one has
$\det (M) \in (0,1]$, which means
\[
    3 (ab + ac + bc) \, \leqslant \, 2 (a+b+c) \, < \,
    1 + 3 (ab + ac + bc) \ts .
\]
Next, one finds $\tr (M) \in (1,3]$, which is equivalent with
\[
    0 \, \leqslant \,  a+b+c  \, < \, 1 \ts ,
\]
while the third condition on the eigenvalues can be stated as
\[
     0 \, \leqslant \, 3 \ts (ab + bc + ca) \, = \, 
    \bigl( 1 - \ee^{-\varDelta + s} \bigr)
      \bigl( 1 - \ee^{-\varDelta - s} \bigr)
      \, < \, 1 \ts .
\]  
All three conditions are sharp in the sense that each possible value
can be realised. Note that these conditions, even when taken together,
are not sufficient for the embeddability of $M$.

Two other properties follow from Eq.~\eqref{eq:M-spec}, namely that
$M$ is positive definite and that $M-\one$ has spectral radius less
than $1$.  The former, via Sylvester's criterion, means $\det (M)>0$
together with $\max (a+b,a+c,b+c) <1$, which also follows from the
trace condition, and
\[
    1 + (ab+bc+ca) \, > \, (a+b+c) + \max (a,b,c) \ts ,
\]
which, in this case, is not an independent condition either.  With
$A \defeq M-\one$, the other property implies that the symmetric
matrix
\[
    Q' \, \defeq \sum_{m\geqslant 1} \frac{(-1)^{m-1}}{m} A^m
    \, = \, \log (M) \, \in \, \alg (A) \, \subset \, \cA^{}_{0}
\]  
is well defined, has vanishing row sums because $A$ is a generator,
and satisfies $M = \ee^{Q'}$. The difficult step, when starting from
this formula, consists in formulating a condition on $a,b,c$ that
ensures the off-diagonal elements of $Q'$ to be non-negative, that is,
its Metzler property. No criterion in polynomial form, say, exists for
this.

As we saw in Example~\ref{ex:no-go}, there are symmetric Markov
matrices with negative eigenvalues that are still embeddable, but never
with a symmetric generator. These cases are naturally included in the
class of doubly stochastic matrices, which we consider below in
Section~\ref{sec:doubly-stoch}.

\subsection{Constant input matrices}\label{sec:c-input}

Let us go back to the constant-input Markov matrices from
Section~\ref{sec:equal-input} and Remark~\ref{rem:constant-input}.
Since $d=3$ is odd, Theorem~\ref{thm:equal-input} and its constructive
proof imply that a constant-input Markov matrix $M$ is embeddable with
a constant-input generator if and only if the summatory parameter $c$
of $M$ satisfies $0\leqslant c < 1$, where $c=0$ means $M=\one$. For
$1< c \leqslant 2$, we have further embeddable cases, but then
necessarily with doubly stochastic generators that are \emph{not} of
equal-input type; compare Example~\ref{ex:no-go} and
Fact~\ref{fact:J-comm}.

So, assume $M=\ee^Q$ is constant-input with $c^{}_{M}=c>1$ and
with $Q$ being doubly stochastic, the most general form of which is 
\begin{equation}\label{eq:d-stoch-gen}
    Q \, = \, \begin{pmatrix}  
    -\alpha - \beta & \alpha + \epsilon & \beta - \epsilon \\
    \alpha - \epsilon & -\alpha - \gamma & 
        \gamma + \epsilon \\ \beta + \epsilon & 
     \gamma - \epsilon & - \beta - \gamma \end{pmatrix}
\end{equation}
with $\alpha, \beta, \gamma \geqslant 0$ and
$\lvert \epsilon \rvert \leqslant \min (\alpha, \beta, \gamma )$. 
One finds $\sigma (Q) = \{ 0, -\varDelta + s^{}_{\nts \epsilon} , 
-\varDelta - s^{}_{\nts \epsilon} \}$ with 
\begin{equation}\label{eq:dsq-para}
  \varDelta = \alpha + \beta + \gamma
  \quad \text{and} \quad
  s^{}_{\nts \epsilon} = 
  \sqrt{\alpha^2 + \beta^2 + \gamma^2 - \alpha \beta -
  \beta \gamma - \gamma \alpha - 3\ts \epsilon^2 \ts }.
\end{equation}  
A matching set of eigenvectors can be given as
\begin{equation}\label{eq:eig}
      v^{}_{0} \, = \, \begin{pmatrix} 1 \\ 1 \\ 1 \end{pmatrix}
      \quad \text{and} \quad
      v^{}_{\pm} \, = \, \begin{pmatrix}
      -\alpha + \gamma -\epsilon \mp s^{}_{\nts \epsilon} \\
      \alpha - \beta-\epsilon \pm s^{}_{\nts \epsilon} \\
       \beta - \gamma + 2 \ts s^{}_{\nts \epsilon}
      \end{pmatrix} ,
\end{equation}
where $v^{}_{0}$ is perpendicular to $v^{}_{\pm}$.

Since we now have
$\sigma (\ee^Q) = \{ 1, 1-c, 1-c\} = \{ 1, \ee^{-\varDelta +
  s_{\epsilon}}, \ee^{-\varDelta - s_{\epsilon}} \}$ with $\varDelta$
and $s^{}_{\nts \epsilon}$ from \eqref{eq:dsq-para}, we see that
$\ee^{-\varDelta \pm s_{\epsilon}}$ must be \emph{negative}, which implies
that $s^{}_{\epsilon} = (2k+1) \pi \ii$ for some $k\in\ZZ$ and results
in $c = 1 + \ee^{-\varDelta}$. Since
$\alpha^2 + \beta^2 +\gamma^2 \geqslant \alpha\beta + \beta\gamma +
\gamma\alpha$, we have
\[
    -3 \ts \epsilon^2 \, \leqslant \,  s^2_{\epsilon}
    \, = \, - (2 k + 1)^2 \pi^2
\]
for any chosen $k\in\ZZ$, which implies
$ \lvert \epsilon \rvert \ts \sqrt{3} \geqslant \lvert 2 k+1\rvert
\,\pi$.  To make sure that $Q$ is a generator, we also need
$\lvert \epsilon \rvert \leqslant \min (\alpha,\beta,\gamma)$, hence
$\varDelta \geqslant 3 \ts \lvert \epsilon \rvert$. Together, for any
chosen $k\in\ZZ$, this means
$c \leqslant 1 + \ee^{- \lvert 2k+1\rvert \ts \pi \sqrt{3}}$, which
takes the form $c \leqslant 1 + \ee^{-\pi \sqrt{3}}$ for $k=0$ or
$k=-1$. This upper bound is the value we saw in
Example~\ref{ex:no-go}.

\begin{coro}\label{coro:exceptional}
  Let\/ $M$ be a constant-input Markov matrix for\/ $d=3$ with
  corresponding summatory parameter\/ $c^{}_{\nts M}>1$. Then, $M$ is
  embeddable if and only if\/ $M=\ee^Q$, with\/ $Q$ a doubly
  stochastic generator. This is possible if and only if\/
  $1 < c^{}_{\nts M} \leqslant 1+\ee^{-\pi\sqrt{3}}$.
\end{coro}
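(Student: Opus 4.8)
The plan is to extract both equivalences from the spectral computation carried out immediately above the corollary, the only genuinely new ingredient being an explicit generator for the sufficiency direction. First I would record that a constant-input $M$ for $d=3$ has the form $M = \one + c^{}_{\nts M} J^{}_3$, so that $A = M - \one = c^{}_{\nts M} J^{}_3$ with $c^{}_{\nts M} \neq 0$. If $M = \ee^Q$ for a generator $Q$, then Fact~\ref{fact:connection} gives $[A,Q] = \nix$, hence $[J^{}_3, Q] = \nix$, and Fact~\ref{fact:J-comm} forces $Q$ to be doubly stochastic. The converse half of the first equivalence is immediate, since exhibiting any generator $Q$ with $M = \ee^Q$ is by definition an embedding. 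This reduces the entire statement to the doubly stochastic ansatz \eqref{eq:d-stoch-gen}.

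For the necessity of the bound I would invoke directly the analysis preceding the corollary. Matching $\sigma(\ee^Q) = \{1, 1 - c^{}_{\nts M}, 1 - c^{}_{\nts M}\}$ with $\ee^{\sigma(Q)}$ forces the two non-trivial eigenvalues of $\ee^Q$ to be negative, whence $s^{}_{\nts\epsilon} = (2k+1)\pi\ii$ and $c^{}_{\nts M} = 1 + \ee^{-\varDelta}$ for some $k \in \ZZ$. Combining the estimate $-3\epsilon^2 \leqslant s^{2}_{\nts\epsilon}$ (Cauchy--Schwarz) with the generator constraint $\varDelta \geqslant 3\lvert\epsilon\rvert$ then yields $\varDelta \geqslant \lvert 2k+1\rvert\ts\pi\sqrt{3} \geqslant \pi\sqrt{3}$, and therefore $c^{}_{\nts M} \leqslant 1 + \ee^{-\pi\sqrt{3}}$, independently of $k$.

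The constructive (sufficiency) direction is the substantive step. Given $c^{}_{\nts M} \in (1, 1 + \ee^{-\pi\sqrt{3}}]$, I would set $\varDelta = -\log(c^{}_{\nts M} - 1) \geqslant \pi\sqrt{3}$ and take $Q = \tfrac{2\pi}{\sqrt{3}}\ts K^{}_1 + (\varDelta - \pi\sqrt{3})\ts J^{}_3$, where $K^{}_1$ is the cyclic generator used in Example~\ref{ex:no-go}. This $Q$ is a genuine doubly stochastic generator: both summands are circulant generators, the scalar $\varDelta - \pi\sqrt{3}$ is non-negative, and those off-diagonal entries of $Q$ that could vanish do so only in the boundary case $\varDelta = \pi\sqrt{3}$, where $Q$ reduces exactly to the Example's generator. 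Since $K^{}_1$ and $J^{}_3$ commute, one has $\ee^Q = \exp(\tfrac{2\pi}{\sqrt{3}}K^{}_1)\exp((\varDelta - \pi\sqrt{3})J^{}_3)$; substituting $\exp(\tfrac{2\pi}{\sqrt{3}}K^{}_1) = \one + (1 + \ee^{-\pi\sqrt{3}})J^{}_3$ from Example~\ref{ex:no-go} and $\exp((\varDelta - \pi\sqrt{3})J^{}_3) = \one + (1 - \ee^{-(\varDelta - \pi\sqrt{3})})J^{}_3$ from Eq.~\eqref{eq:J-exp}, and multiplying the two factors by means of $J^{2}_3 = -J^{}_3$, collapses everything to $\ee^Q = \one + (1 + \ee^{-\varDelta})J^{}_3 = M$, as required.

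I expect the sufficiency step to be the main obstacle, and within it the requirement that the constructed $\ee^Q$ be \emph{constant-input} rather than merely doubly stochastic: the latter is automatic from $Q$, but forcing the two circulant off-diagonal coefficients to coincide (equivalently, landing in the span of $\one$ and $J^{}_3$) is what actually pins down the embedding. The commuting decomposition above makes this transparent, and at the same time explains why the admissible range saturates precisely at $c^{}_{\nts M} = 1 + \ee^{-\pi\sqrt{3}}$: this value corresponds to $\varDelta = \pi\sqrt{3}$, that is, to Example~\ref{ex:no-go} with no $J^{}_3$-correction, whereas every smaller admissible $c^{}_{\nts M}$ is reached by adding a suitable non-negative multiple of $J^{}_3$.
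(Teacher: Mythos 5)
Your proposal is correct and takes essentially the same route as the paper: the necessity of the bound is exactly the spectral analysis preceding the corollary, and your constructed generator coincides with the paper's $Q_{\alpha} = 3\ts\alpha\ts J^{}_{3} + T$ under $3\ts\alpha = \varDelta$, since $\tfrac{2\pi}{\sqrt{3}}\ts K^{}_{1} = \pi\sqrt{3}\, J^{}_{3} + T$ and hence $\tfrac{2\pi}{\sqrt{3}}\ts K^{}_{1} + (\varDelta - \pi\sqrt{3})\ts J^{}_{3} = \varDelta\ts J^{}_{3} + T$. The only (cosmetic) difference is that your decomposition into the two commuting generators $K^{}_{1}$ and $J^{}_{3}$ makes the Metzler property of $Q$ immediate, whereas the paper verifies it entrywise via the condition $\alpha \geqslant \pi/\sqrt{3}$.
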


\begin{proof}
  We can proceed constructively with the choice
  $\alpha = \beta = \gamma$.  Choose $Q_{\alpha} = 3\ts \alpha J + T$
  with the generator $J=J_3$ from Example~\ref{ex:no-go}, which is the
  unique constant-input generator with $J^2 = -J$, and the matrix
\[
     T \, = \, \frac{\pi}{\sqrt{3}}
     \begin{pmatrix} 0 & 1 & -1 \\ -1 & 0 & 1 \\
     1 & -1 & 0 \end{pmatrix} ,
\]   
which commutes with $J$ and satisfies $\ee^T = \one+2 J$, by a
calculation similar to the one used in Example~\ref{ex:no-go}.  Note
that $Q_{\alpha}$ is a generator precisely when
$\alpha\geqslant \pi/\sqrt{3}$. One finds
\[
     M \, = \, \ee^{Q_{\alpha}} \, = \, \bigl( \one + 
     (1 - \ee^{- 3 \alpha}) J \bigr) \bigl( \one + 2 J \bigr)
     \, = \, \one + (1 + \ee^{-3 \alpha}) J \ts ,
\] 
which is a constant-input Markov matrix with
$c^{}_{\nts M} = 1+\ee^{-3 \alpha}$.  With the admissible choices for
$\alpha$, one exhausts the claimed range of the parameter
$c^{}_{\nts M}$.
\end{proof}

Note that some of the arguments in the last proof are related to the 
claims from Lemma~\ref{lem:alg-1D}, because $M-\one$ in 
Corollary~\ref{coro:exceptional} is a constant-input generator.

\begin{remark}\label{rem:multi}
  We have used the choice $k=0$ to get the maximal range.
  Alternatively, when using $k\in\NN$, so
  $\epsilon^{}_{k} = (2k+1) \pi/\sqrt{3}$ and
  $\alpha\geqslant \epsilon^{}_{k}$, one obtains the range
  $1 < c^{}_{\nts M} \leqslant 1 + \ee^{-(2k+1)\pi\sqrt{3}} $ with
  another embedding solution. So, for smaller and smaller regions, one
  gets an increasing number of solutions to the embedding problem; see
  \cite{Len} for a related discussion.

  In fact, due to $(\one + 2 J)^2 = \one$, one has
  $\ee^{nT} = \one + \bigl( (-1)^{n+1} + 1 \bigr) J$, and this gives
  extra embedding solutions also for $c<1$. Indeed, when
  $n=2m\geqslant 0$ is even, $Q_{\alpha,m} = 3 \ts \alpha J + 2 m T$
  is a generator when $\alpha \geqslant 2 m \pi/\sqrt{3}$, and
\[
     M\, = \,\ee^{Q_{\alpha,m}} \, = \, \one + (1-\ee^{-3\alpha}) J
\]
then is a constant-input Markov matrix with parameter
$c^{}_{\nts M}=1-\ee^{-3\alpha}$.  Note that this produces an
analogous multi-embedding phenomenon as in the previous case, but this
time for the range
$1-\ee^{-2m\pi\sqrt{3}} \leqslant c^{}_{\nts M} < 1$.  These extra
solutions (with $m\in\NN$) are doubly stochastic generators that are
not of equal-input type.  \exend
\end{remark}

\subsection{Doubly stochastic matrices}\label{sec:doubly-stoch}

A natural extension of symmetric Markov matrices is provided by the
family of doubly stochastic ones. The latter form a closed convex set
that is again a monoid. The extremal elements are the $d\ts !$
permutation matrices, compare \cite{Joh2}, which are linearly
dependent for $d\geqslant 3$.

There are $6$ extremal matrices for $d=3$, but the corresponding
monoid is four-dimensional. Indeed, a simple calculation, compare
Fact~\ref{fact:J-comm}, shows that any doubly stochastic matrix can be
parametrised as
\begin{equation}\label{eq:ds-def}
    M \, = \, \begin{pmatrix}  1-a-b & a+e & b-e \\
    a-e & 1-a-c & c+e \\ b+e & 
    c-e & 1 - b - c \end{pmatrix}
\end{equation}
with $a,b,c \in [0,1]$ and $e\in [-1,1]$, subject to the obvious
constraints to make $M$ a Markov matrix, namely
$\lvert e \rvert \leqslant \min (a,b,c)$ and
$\max (a+b,a+c,b+c) \leqslant 1$.  The locally constant, topological
dimension clearly is $4$, and there is only one additional parameter
in comparison to Eq.~\eqref{eq:M-sym}, namely $e$ (for \emph{excess}),
with $e=0$ giving the symmetric matrices.  Note that $M$ is normal if
and only if $e=0$. Here, $M$ has spectrum
$\sigma (M) = \{ 1, 1-\varDelta^{}_M + s^{}_{\nts M}, 1 -
\varDelta^{}_M - s^{}_{\nts M} \}$ with $\varDelta^{}_M = a+b+c$ and
$s^{}_{\nts M} = \sqrt{a^2 + b^2 + c^2 - ab - bc - ca - 3 e^2}$.

The matrix $M$ from Example~\ref{ex:no-go} is doubly stochastic and
shows that some subtle phenomena can occur. 
However, if a doubly stochastic Markov matrix is embeddable, so $M=\ee^Q$
for some generator $Q$, the row vector $(1, \ldots , 1)$ is a left eigenvector
of both $M$ and $Q$, with eigenvalue $0$ for $Q$ by the spectral mapping 
theorem because $Q$ is a rate matrix; compare
Proposition~\ref{prop:G-asymp}{\ts}(1).  This implies that the 
generator $Q$ is doubly stochastic.

\begin{coro}\label{coro:d-stoch}
  A doubly stochastic Markov matrix\/ $M$ is embeddable if and 
  only if\/ $M=\ee^Q$ with\/ $Q$ a doubly stochastic
  generator.  \qed
\end{coro}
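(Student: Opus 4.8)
The plan is to prove Corollary~\ref{coro:d-stoch} by establishing both implications, where one direction is immediate and the other hinges on a spectral observation about left eigenvectors. The backward implication is trivial: if $M=\ee^Q$ with $Q$ a generator, then $M$ is embeddable by definition, so I would dispose of it in one sentence. The real content is the forward implication, and here the key idea—already foreshadowed in the paragraph preceding the statement—is that the all-ones row vector $\one^T = (1,\ldots,1)$ must be a common left eigenvector of $M$ and of any generator $Q$ realising $M=\ee^Q$.

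First I would note that $M$ being doubly stochastic means precisely that $(1,\ldots,1)\ts M = (1,\ldots,1)$, i.e.\ $(1,\ldots,1)$ is a left eigenvector of $M$ with eigenvalue $1$. The goal is to transfer this to $Q$. The mechanism is the spectral mapping relationship between $M$ and $Q$: since $M = \ee^Q$, any left eigenvector structure of $M$ should reflect a corresponding structure for $Q$. The cleanest route is to show directly that $(1,\ldots,1)\ts Q = 0$, which is exactly the statement that $Q$ is doubly stochastic (a generator with vanishing column sums).

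The main step, and the one requiring a little care, is arguing that the left-eigenvalue-$1$ eigenspace of $M$ corresponds under the logarithm to the left-eigenvalue-$0$ eigenspace of $Q$. I would proceed as follows: let $w = (1,\ldots,1)$, and observe that $w$ is automatically a left eigenvector of $Q$ because $Q$ is a rate matrix—wait, that is \emph{not} given, since $Q$ need not be doubly stochastic a priori; rather, $w$ being a \emph{right} null vector is what rate matrices always satisfy. The correct argument is the converse: I use that $w\ts M = w$ together with $M=\ee^Q$. Since $w\ts\ee^Q = w$, I expand using $\ee^Q = \one + \sum_{m\geqslant 1}\frac{1}{m!}Q^m$ to get $w\sum_{m\geqslant 1}\frac{1}{m!}Q^m = 0$. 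Setting $v \defeq w\ts Q$, this reads $v\ts\sum_{m\geqslant 0}\frac{1}{(m+1)!}Q^m = 0$. Because the operator $\sum_{m\geqslant 0}\frac{1}{(m+1)!}Q^m = Q^{-1}(\ee^Q-\one)$ acting on the right is invertible (its eigenvalues are $\frac{\ee^\lambda-1}{\lambda}\ne 0$ for every $\lambda\in\sigma(Q)$, using $\lambda\ne 2\pi\ii k$, which holds since $\mathrm{Re}(\lambda)\leqslant 0$ with equality only at $\lambda=0$ by Proposition~\ref{prop:G-asymp}\ts(1)), we conclude $v = w\ts Q = 0$, i.e.\ all column sums of $Q$ vanish.

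The main obstacle I anticipate is justifying the invertibility cleanly, since the relevant operator acts on the \emph{left} and one must argue on the level of the spectrum of $Q$. The neat way to sidestep a direct invertibility computation is to invoke Proposition~\ref{prop:G-asymp}\ts(1): the eigenvalues of $Q$ are $0$ or have strictly negative real part, so the only eigenvalue $\lambda$ with $\ee^\lambda = 1$ is $\lambda = 0$. Hence the left eigenspace of $M$ for eigenvalue $1$ equals the left eigenspace of $Q$ for eigenvalue $0$ (with matching algebraic multiplicities, by the spectral mapping theorem and the fact that $M$ and $Q$ are simultaneously triangularisable); since $w$ lies in the former, it lies in the latter, giving $w\ts Q = 0$. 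This shows $Q$ is a doubly stochastic generator and completes the proof.
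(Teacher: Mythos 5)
Your proposal is correct, and its essential input is the same as the paper's: since $M$ is doubly stochastic, $w=(1,\ldots,1)$ is a left eigenvector of $M$ for the eigenvalue $1$, and Proposition~\ref{prop:G-asymp}{\ts}(1) guarantees that $0$ is the \emph{only} element of $\sigma(Q)$ whose exponential equals $1$, which is what lets the eigenvector transfer to $Q$. Your closing ``sidestep'' paragraph is precisely the paper's one-sentence proof (it simply cites the spectral mapping theorem). Where you genuinely add something is your primary mechanism: factoring $\ee^Q-\one = Q\ts f(Q)$ with the entire function $f(z)=\sum_{m\geqslant 0} z^m/(m+1)!$, checking $f(\lambda)\ne 0$ for every $\lambda\in\sigma(Q)$, and concluding that $f(Q)$ is invertible, so that $w\ts Q\ts f(Q)=0$ forces $w\ts Q=0$. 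This is more self-contained than the eigenspace identification, and it quietly repairs a subtlety the paper glosses over: in general a left eigenvector of $\ee^Q$ need \emph{not} be a left eigenvector of $Q$, so equating the $1$-eigenspace of $M$ with the $0$-eigenspace of $Q$ really requires either equality of algebraic and geometric multiplicity of $0$ (Proposition~\ref{prop:G-asymp}{\ts}(2)) or the fact that the exponential preserves Jordan structure; your invertibility computation avoids generalized eigenspaces altogether. Two cosmetic corrections: do not write the operator as $Q^{-1}(\ee^Q-\one)$, since a generator is never invertible ($0\in\sigma(Q)$ always) --- it should be introduced directly as $f(Q)$ via the convergent power series; and your nonvanishing condition should read $f(0)=1\ne 0$ at $\lambda=0$, while for $\lambda\ne 0$ one needs $\lambda\ne 2\pi\ii k$ only for $k\ne 0$, which is exactly what $\mathrm{Re}(\lambda)<0$ provides.
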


Note that the generator $Q$ of Eq.~\eqref{eq:d-stoch-gen} is of
equal-input type if and only if $\epsilon=0$ together with
$\alpha = \beta = \gamma$, which means that it is then a
constant-input generator.

\begin{theorem}
  Let\/ $M$ be the general doubly stochastic Markov matrix in\/
  $\cM_3$, as given in Eq.~\eqref{eq:ds-def}, 
  with\/ parameters\/ $a,b,c\geqslant 0$ and\/
  $e\in \RR$ subject to\/ $\max (a+b, a+c, b+c) \leqslant 1$ and\/
  $\lvert e \rvert \leqslant \min (a,b,c)$. Let\/ $p$ denote the
  minimal polynomial of\/ $M$. Then, $M$ is embeddable if and only if
  one of the following situations applies.
\begin{enumerate}\itemsep=2pt
\item $\degr (p)=1$, which means\/ $p(z) = (z-1)$, and thus\/
  $M=\one = \ee^{\nix}$;
\item $\degr (p) = 2$, which implies\/ $p(z) = (z-1)(z-\lambda)$ for
  some\/ $\lambda \in (-1,1)$, so $M$ is diagonalisable; if\/ $1$ has
  multiplicity\/ $2$, we have\/ $e=\epsilon=0$, $M$ is symmetric,
  and\/ $ab + bc + ca=0$ together with\/
  $0 < \max (a,b,c) < \frac{1}{2}$; if\/ $1$ is simple, $A=M-\one$
  must be a constant-input generator with\/ $a=b=c$, parameter sum\/
  $c^{}_{\nts M}= 3 a =1\pm \ee^{-\varDelta}$, and\/
  $0 < c^{}_{\nts M} \leqslant 1+\ee^{-\pi\sqrt{3}} $ with\/
  $c^{}_{\nts M} \ne 1$;
\item $\degr (p) = 3$, and there are non-negative numbers\/
  $\alpha, \beta, \gamma$, and some\/ $\epsilon\in\RR$, such that
\[
   \begin{pmatrix} a \\ b \\ c \\ e \end{pmatrix} \, = \,
   \myfrac{1}{3} \Bigl( 1 -  \frac{\sinh (s^{}_{\epsilon})}
          {s^{}_{\epsilon}} \ts \varDelta \ts\ts
   \ee^{-\varDelta} - \cosh(s^{}_{\epsilon}) \ts\ts \ee^{-\varDelta} \Bigr)
   \begin{pmatrix} 1 \\ 1 \\ 1 \\ 0 \end{pmatrix}  +
   \frac{\sinh (s^{}_{\epsilon})}{s^{}_{\epsilon}} \ts\ts \ee^{-\varDelta}
   \begin{pmatrix} \alpha \\ \beta \\ \gamma \\ \epsilon
   \end{pmatrix},
\]   
with\/ $\varDelta$ and\/ $s^{}_{\epsilon}$ as in
Eq.~\eqref{eq:dsq-para}, and\/
$\lvert \epsilon\rvert \leqslant \min (\alpha, \beta, \gamma)$.
\end{enumerate}    
\end{theorem}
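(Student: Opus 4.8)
The plan is to reduce, via Corollary~\ref{coro:d-stoch}, to the search for a doubly stochastic generator, and then to organise everything by $\degr (p)$, which is exactly how the three cases are arranged. By Corollary~\ref{coro:d-stoch}, a doubly stochastic $M$ is embeddable if and only if $M = \ee^Q$ with $Q$ doubly stochastic, and every such $Q$ is of the form in Eq.~\eqref{eq:d-stoch-gen}, with spectral data $\varDelta$ and $s_{\epsilon}$ from Eq.~\eqref{eq:dsq-para} and eigenvectors from Eq.~\eqref{eq:eig}. As $M$ is $3 \times 3$, we have $\degr (p) \in \{1,2,3\}$, where $\degr (p) = 3$ is precisely the cyclic case. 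The case $\degr (p) = 1$ forces $p(z) = z-1$ and hence $M = \one = \ee^{\nix}$, which is case (1).

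For $\degr (p) = 3$, I would simply exponentiate the generator in Eq.~\eqref{eq:d-stoch-gen} through its spectral decomposition, in direct analogy with the derivation of Lemma~\ref{lem:sym}. Carrying the excess parameter $\epsilon$ through the analogue of the projectors in Eq.~\eqref{eq:proj} replaces $s$ by $s_{\epsilon}$ and introduces the fourth coordinate $e = \frac{\sinh (s_{\epsilon})}{s_{\epsilon}} \ts \ee^{-\varDelta} \epsilon$; reading off the $(a,b,c,e)$ coordinates against the general form in Eq.~\eqref{eq:ds-def} yields exactly the identity in case (3). The converse direction is automatic, since prescribing $\alpha,\beta,\gamma,\epsilon$ with $\lvert \epsilon \rvert \leqslant \min (\alpha,\beta,\gamma)$ produces a bona fide doubly stochastic generator whose exponential is the corresponding $M$.

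For $\degr (p) = 2$, Lemma~\ref{lem:alg-1D} tells us that $M$ is diagonalisable with $p(z) = (z-1)(z-\lambda)$ and $\lambda \in (-1,1)$, and I would split according to the multiplicity of the eigenvalue $1$. If $1$ is simple, Lemma~\ref{lem:alg-1D} forces $M$ to be of equal-input type, and double stochasticity upgrades this to constant-input, so $a = b = c$ and $A = M - \one = c^{}_{\nts M} J^{}_3$; then Theorem~\ref{thm:equal-input} (for $\lambda > 0$) together with Corollary~\ref{coro:exceptional} (for $\lambda < 0$) yields $c^{}_{\nts M} = 1 \pm \ee^{-\varDelta}$ subject to $0 < c^{}_{\nts M} \leqslant 1 + \ee^{-\pi \sqrt{3}}$ and $c^{}_{\nts M} \neq 1$. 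If instead $1$ has multiplicity $2$, then $0$ is a double eigenvalue of $Q$, which forces $-\varDelta + s_{\epsilon} = 0$; squaring $s_{\epsilon} = \varDelta$ and inserting Eq.~\eqref{eq:dsq-para} reduces this to $\alpha\beta + \beta\gamma + \gamma\alpha + \epsilon^2 = 0$. Since every summand is non-negative, we get $\epsilon = 0$ and two of $\alpha,\beta,\gamma$ equal to zero, so $Q$ is symmetric and reducible, $M$ is symmetric, and Fact~\ref{fact:sym-zero} supplies $ab + bc + ca = 0$ together with $0 < \max (a,b,c) < \frac{1}{2}$.

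I expect the multiplicity-two subcase of $\degr (p) = 2$ to be the main obstacle, since one has to rule out any surviving excess $\epsilon$. The decisive tool is the sharp chain $s_{\epsilon} \leqslant s \leqslant \varDelta$ underlying Eq.~\eqref{eq:dsq-para}, whose equality $s_{\epsilon} = \varDelta$ can hold only when each non-negative term of $\alpha\beta + \beta\gamma + \gamma\alpha + \epsilon^2$ vanishes; this is precisely what collapses the generic doubly stochastic generator onto a symmetric, reducible one and ties the subcase back to Fact~\ref{fact:sym-zero}. By contrast, the $\degr (p) = 3$ computation is a routine extension of Lemma~\ref{lem:sym}, and the simple-eigenvalue subcase of $\degr (p) = 2$ follows cleanly once the equal-input reduction is in place.
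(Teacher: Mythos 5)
Your proposal is, in structure, the paper's own proof: the reduction via Corollary~\ref{coro:d-stoch}, the case split along $\degr(p)$, and the ingredients for cases (1) and (2) --- Lemma~\ref{lem:alg-1D} for the simple-eigenvalue subcase, Theorem~\ref{thm:equal-input} together with Corollary~\ref{coro:exceptional} for the constant-input range, and Fact~\ref{fact:sym-zero} after the collapse $\alpha\beta+\beta\gamma+\gamma\alpha+\epsilon^{2}=0$ --- all match what the paper does.

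The gap is in case (3). You exponentiate the generator of Eq.~\eqref{eq:d-stoch-gen} ``through its spectral decomposition'', which presupposes that $Q$ is diagonalisable, and you never justify this. But $\degr(p)=3$ does not mean that $M$ is simple: it also covers cyclic $M$ with a real eigenvalue of algebraic multiplicity $2$ but geometric multiplicity $1$, i.e.\ with a non-trivial Jordan block, and such matrices do occur among embeddable doubly stochastic ones (take, for instance, $\alpha=2$, $\beta=\gamma=1$ and $3\ts\epsilon^{2}=1$, so that $s^{}_{\epsilon}=0$ while $Q+\varDelta\ts\one$ has rank $2$). For such an $M$, \emph{every} generator with $\ee^{Q}=M$ is non-diagonalisable --- if $Q$ were diagonalisable, so would be $\ee^{Q}$ --- hence the projector decomposition you invoke simply does not exist, and both your forward derivation and your ``automatic'' converse rest on a formula that you have not established in this subcase. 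This is precisely the point of the paper's closing sentence, which obtains the Jordan case as a limit of simple matrices. The cleanest repair is one sentence: the matrix identity between $\ee^{Q}$ and the matrix $M(a,b,c,e)$ given by the displayed formula has both sides entire in $(\alpha,\beta,\gamma,\epsilon)$, because $\cosh(s^{}_{\epsilon})$ and $\sinh(s^{}_{\epsilon})/s^{}_{\epsilon}$ are entire functions of $s_{\epsilon}^{2}$, which is a polynomial in the parameters; since the identity holds on the dense open set of parameters where $Q$ is simple, it holds everywhere, including the Jordan locus. With that continuity (or analyticity) argument added, your proof is complete; without it, case (3) is not.
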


\begin{proof}
  Case (1) is trivial, while the case distinction in (2) follows from
  a simple calculation with the eigenvalues of $M$ and its
  consequences for $\varDelta$ and $s^{}_{\epsilon}$. When
  $\varDelta=s^{}_{\epsilon}$, we are back to
  Fact~\ref{fact:sym-zero}, while $s^{}_{\epsilon}=0$ implies $M$ to
  be equal-input and doubly stochastic, hence constant-input, and the
  condition stated here follows from Theorem~\ref{thm:equal-input}
  and Corollary~\ref{coro:exceptional}.
   
  For Case (3), $M$ is either simple or has a real eigenvalue
  $\lambda$, with $\lvert \lambda \rvert < 1$ and algebraic
  multiplicity $2$, but geometric multiplicity $1$, and hence a
  non-trivial Jordan block in the Jordan normal form. When $M$ is
  simple, hence cyclic, we are in Case (2) of Lemma~\ref{lem:alg-1D},
  so $Q$ is diagonalisable as well and
  $A=M-\one \in \alg (Q) = \langle Q, J \ts \rangle^{}_{\RR}$ with
  $J=J_3$ as before. Here, $A,Q$ and $J$ are simultaneously
  diagonalisable, with a matrix that derives from the eigenvectors of
  $Q$ as given in \eqref{eq:eig}.  Now, we can use $A= u\ts Q + v J $
  and compare eigenvalues, which results in the equation as
  stated. The further constraints guarantee the generator property of
  $Q$.
   
  Finally, the remaining Jordan case is obtained as a limit of such
  simple matrices, which still gives the same equation for the
  parameters.
\end{proof}

\section{Outlook}\label{sec:outlook}

There are many aspects of the embedding problem that we have not
treated or addressed here, though some were briefly mentioned in our
remarks.  Among them are more general results on uniqueness or
multiple solutions, which becomes increasingly difficult with growing
dimension, or the classification of matrix classes that are connected
with Jordan or Lie algebras, because their number also increases
quickly. A more complete picture should still be achievable up to
$d=4$, while further constraints would be needed beyond.

From the viewpoint of biological application, it seems desirable to
concretely consider matrix classes for $d=4$ that cover the standard
mutation schemes of molecular evolution, which are commonly used in
bioinformatics and in population genetics. Since the number of
relevant matrix classes is much larger than for $d=3$, this needs a
separate treatment. In this context, it would also be relevant to know 
the relation to inhomogeneous Markov chains, as this can cover
time-dependent processes more realistically.

Another direction is the extension of the analysis to countable state
Markov chains, which would require new methods and tools from
functional analysis, or to sub-stochastic matrices and their
generators, which show up increasingly in theoretic and applied
probability. Here, the non-negativity conditions remain the same, but
the row sums for sub-stochastic matrices or generators are either
$\leqslant 1$ or $\leqslant 0$, respectively. One would expect
inequalities that parallel our above results, but little has been
done in this direction so far.

\section*{Acknowledgements}

It is our pleasure to thank F.\ Alberti, B.\ Gardner, P.D.\ Jarvis,
H.\ K\"{o}sters, A.\ Radl and M.~Steel for discussions.  We also thank
the organisers and participants of MAM{\ts}10 in Hobart, Tasmania, for
useful hints on the problem, and two referees for their thoughtful
comments that helped to improve the presentation.  This work was
supported by the German Research Foundation (DFG), within the SPP
1590, and by the Australian Research Council (ARC), via Discovery
Project DP 180{\ts}102{\ts}215.

\smallskip

\end{document}